\documentclass[20pt]{amsart}
\usepackage[latin1]{inputenc}
\usepackage{mathrsfs}
\usepackage{cite}
\usepackage{mathtools}
\usepackage{amsmath, amsthm, amsfonts, amssymb}
\usepackage{mathtools}
\usepackage{enumitem}
\usepackage{tikz}
\usepackage{tikz-cd}
\usepackage[all]{xy}

\usepackage{amsthm}
\theoremstyle{plain}
\newtheorem{thm}{Theorem}[section]

\newtheorem{cor}[thm]{Corollary}
\newtheorem{lemma}[thm]{Lemma}
\newtheorem{rmk}[thm]{Remark}
\newtheorem{prop}[thm]{Proposition}

\theoremstyle{definition}
\newtheorem{example}[thm]{Example}
\newtheorem{defn}[thm]{Definition}
\newtheorem{que}[thm]{Question}

 \usepackage{todonotes}

\newcommand{\CHM}{\mathrm{\CHM}}

\usepackage[
urlcolor=blue,
colorlinks=true,
linkcolor=blue,
citecolor=blue,
]{hyperref}

\begin{document}
\title{Bounds on automorphism groups of surfaces of general type with negative $c_2$ $\,\,\,\,\,$}
\author{Xiaokun Zhong}
	\address{School of Mathematics and Computer Science, Quanzhou Normal University, Quanzhou, 362000, P. R. China.}
	\email{xiaokunzhong@163.com}
	\date{\today}
	\makeatletter \@namedef{subjclassname@2020}{\textup{2020} Mathematics Subject Classification} 
     \makeatother
      \subjclass[2020]{14G17, 14J50, 14J29, 14D06.}
	\keywords{Positive characteristic, automorphisms of surfaces, surfaces of general type, fibrations.}

\begin{abstract}
Let $k$ be an algebraically closed field of characteristic $p>0$, and let $S$ be a minimal smooth projective surface of general type over $k$. If the second Chern class satisfies $c_2(S)<0$, then the order of the automorphism group satisfies
\[
|\operatorname{Aut}_k(S)|<2598\,(K_S^2)^4,
\]
and the order of every abelian subgroup $G\subset \operatorname{Aut}_k(S)$ satisfies
\[
|G|<81\,(K_S^2)^3.
\]
\end{abstract}

\maketitle
\vspace*{6pt}
\tableofcontents  

\section{Introduction}

For a smooth projective curve $C$ of genus $g\ge 2$ over the complex numbers $\Bbb{C}$, Hurwitz gave the classical bound (see \cite{s7})
\[
|\operatorname{Aut}(C)|<84(g-1).
\]
This result initiated the systematic study of the orders of automorphism groups of curves, surfaces, and higher-dimensional varieties (see \cite{s1,s2,s5,s6,s9,s10,s14,s15,s17,s18,s19,s20,s21}). In particular, Xiao proved that if $S$ is a minimal smooth projective surface of general type (see \cite{s17,s18}), then
\[
|\operatorname{Aut}(S)|\le 42^2 K_S^2.
\]
Hacon, M$^C$Kernan, and Xu generalized this to higher dimensions: for an $n$-dimensional smooth projective variety $X$ of general type, there exists a constant $c$ depending only on $n$ such that the birational automorphism group satisfies (see \cite{s5})
\[
|\operatorname{Bir}(X)|\le c\cdot \operatorname{vol}(X,K_X).
\]

However, over an algebraically closed field $k$ of characteristic $p>0$, the linear bound in terms of the genus for curves no longer holds. Stichtenoth (see \cite{s14}) proved that if $C$ is a smooth projective curve of genus $g\ge 2$ over $k$, then
\[
|\operatorname{Aut}_k(C)|\le \frac{224}{3}g^4,
\]
and equality holds if and only if $C$ is the quartic Fermat curve and $p=3$.

For a minimal smooth projective surface $S$ of general type, it is still unknown whether the order of the automorphism group as an abstract group can be controlled by a polynomial function in $K_S^2$. Existing results only provide bounds for certain special subgroups of $\operatorname{Aut}_k(S)$. Ballico (see \cite{s1}) proved that there exists a constant $c$ such that, for every subgroup $M$ of $\operatorname{Aut}_k(S)$ of order is coprime to $p$,
\[
|M|\le c\cdot \log(K_S^2)\cdot (K_S^2)^{45/2}.
\]
Moreover, Cai (see \cite{s2}) obtained that if $G$ is an abelian subgroup of $\operatorname{Aut}_k(S)$ whose order is coprime to $p$, then
\[
|G|\le 624 K_S^2+6708.
\]
Szab\'{o} (see \cite{s15}) investigated the relation between $\operatorname{Aut}_k(S)$ and its $p$-subgroups. He proved that if the order of a Sylow $p$-subgroup of $\operatorname{Aut}_k(S)$ is $t$, then
\[
|\operatorname{Aut}_k(S)|<t^9(25K_S^2)^{160}.
\]
Thus, in positive characteristic, bounding the order of the Sylow $p$-subgroup of $\operatorname{Aut}_k(S)$ is a key step.

In this paper, under the assumption that the surface of general type admits a fibration $f:S\to B$, we consider the group $\operatorname{Aut}_f(S)$ consisting of automorphisms preserving the fibration. We estimate the order of its Sylow $p$-subgroup and obtain the following results.
\begin{thm}[see Corollary 4.2]
Let $k$ be an algebraically closed field of characteristic $p>0$, $S$ a minimal smooth projective surface of general type over $k$, and $B$ a smooth projective curve of genus $b\ge 2$ over $k$. Let $f:S\to B$ be a fibration. Then
\[
|\operatorname{Aut}_f(S)|<15519\,(K_S^2)^4.
\]
\end{thm}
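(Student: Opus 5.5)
The plan is to use the exact sequence
\[
1\longrightarrow K\longrightarrow \operatorname{Aut}_f(S)\longrightarrow H\subset \operatorname{Aut}_k(B),
\]
where $K$ is the subgroup of automorphisms acting trivially on $B$ and $H$ is the image, and to bound $|K|$ and $|H|$ separately by polynomials in $K_S^2$. Write $g\ge 2$ for the genus of a general fibre $F$; it is at least $2$ because $S$ is of general type. We have $|\operatorname{Aut}_f(S)|\le |K|\cdot|H|$, and the two factors will be handled with Stichtenoth's bound \cite{s14}, which is valid in characteristic $p$.

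\textbf{Bounding $H$.} Since $b\ge 2$, Stichtenoth gives $|H|\le |\operatorname{Aut}_k(B)|\le \frac{224}{3}b^4$. To convert this into a bound in $K_S^2$, I will use the standard inequality for relatively minimal fibrations of general type with $b\ge2$ and $g\ge2$:
\[
K_S^2=K_{S/B}^2+8(g-1)(b-1)\ge 8(g-1)(b-1).
\]
This uses $K_{S/B}^2\ge 0$, which holds in positive characteristic because $K_{S/B}$ is nef. Consequently $b-1\le K_S^2/(8(g-1))\le K_S^2/8$, and hence $b\le K_S^2/8+1$.

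\textbf{Bounding $K$.} The group $K$ acts faithfully on the generic fibre $F_\eta$, a smooth curve of genus $g$ over $k(B)$. Passing to the geometric generic fibre, $K$ embeds into $\operatorname{Aut}(F_{\bar\eta})$. Stichtenoth's bound holds over any algebraically closed field of characteristic $p$, so $|K|\le \frac{224}{3}g^4$. I must make sure the bound applies to the geometric generic fibre even though it may not be smooth, since in characteristic $p$ it can be non-smooth; in that case I would instead restrict $K$ to a general closed fibre. That restriction is injective, because an automorphism fixing infinitely many fibres pointwise is the identity. The same inequality then gives $g-1\le K_S^2/(8(b-1))\le K_S^2/8$.

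\textbf{Combining.} The product is
\[
\Bigl(\tfrac{224}{3}\Bigr)^2 g^4 b^4,
\]
and this has to be controlled by $(K_S^2)^4$. The key step is to bound $gb$ in terms of $K_S^2$ rather than bounding each factor separately, since a separate bound would give degree $8$. From $(g-1)(b-1)\le K_S^2/8$ together with $g,b\ge2$, we get $g\le 2(g-1)$ and $b\le 2(b-1)$, so
\[
gb\le 4(g-1)(b-1)\le K_S^2/2.
\]
This yields
\[
|\operatorname{Aut}_f(S)|\le \Bigl(\tfrac{224}{3}\Bigr)^2\frac{(K_S^2)^4}{16}\approx 348\,(K_S^2)^4,
\]
which is already below $15519\,(K_S^2)^4$. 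The paper's larger constant presumably comes from a cruder estimate, or from passing to the singular-generic-fibre case via a general fibre, or from using their earlier lemmas on $K$, so there is slack.

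\textbf{Main obstacle.} The difficulty is justifying the two inputs in characteristic $p$. The first is that $K_{S/B}^2\ge 0$, which is Szpiro's result, still valid here. The second is that $K$ acts faithfully on a smooth curve of genus $g\ge 2$, which is needed to apply Stichtenoth's bound. If the general fibre is non-smooth, as for quasi-elliptic-type fibrations, I would replace it by the normalization of the geometric generic fibre. The plan is then to bound $K$ through its action on that normalization, whose genus is smaller, plus a bounded kernel. This is where the constant degrades, and I expect this case to require the most care.
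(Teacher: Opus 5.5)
There is a genuine gap, and it lies exactly where the paper does its work.

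\textbf{The kernel bound.} Your bound $|\operatorname{Aut}_B(S)|\le\frac{224}{3}g^4$ only covers the case where the normalization $\widetilde C$ of the geometric generic fibre has genus $\ge 2$ (Lemma 2.2(i)). In characteristic $p$ the generic fibre is merely regular, and neither of your fallbacks works when it is not smooth.

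\emph{Restricting to a general closed fibre.} If the generic fibre is not smooth, then every closed fibre is singular. So Stichtenoth's theorem does not apply to a general fibre. Moreover, its automorphism group is in general infinite: a monomial cusp $y^p=x^r$ already carries a $\mathbb{G}_m$-action. Injectivity of the restriction map therefore bounds nothing.

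\emph{Passing to $\widetilde C$.} This fails in the decisive case $g(\widetilde C)=0$, $t=1$. By Shepherd-Barron, this is the situation for every surface with $c_2<0$. There $\operatorname{Aut}_K(C)$ embeds into the stabilizer of a single point in $\mathrm{PGL}_2(\overline K)$. That stabilizer contains the infinite elementary abelian $p$-group $\{\tau\mapsto\tau/(1+\theta\tau):\theta\in\overline K\}$. So geometry gives no control on the Sylow $p$-subgroup $H$ in $\operatorname{Aut}_K(C)\cong H\rtimes M$.

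\emph{What is missing.} That control is the entire content of Section 3, and it comes from the arithmetic of the imperfect field $K$. One descends along Shimada's Frobenius tower $C_0\cong\mathbb{P}^1_{K_0}$, $C_1$, $C_2$, writing explicit local equations whose coefficients avoid $K_i^p$. One then shows that the parameter $\theta$ of a $p$-element is forced to vanish or is tightly constrained. This yields $|H|\le 2p(g-1)$, and then $|H|\le 2(g-1)(2g+1)$ using $(p-1)\mid 2g$ from Tate's genus change formula. The remaining cases, $g(\widetilde C)=1$ and $g(\widetilde C)=0$ with $t\ge 2$, also need the separate bounds of Lemma 2.2(ii),(iii), which your plan does not supply.

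\textbf{The inequality $K_S^2\ge 8(g-1)(b-1)$.} This inequality is false in this setting. Szpiro-type nonnegativity of $K_{S/B}^2$ needs a smooth generic fibre. In the paper's Example 4.4 one has $K_S^2=32$, $g=2$, $b=6$, so $K_{S/B}^2=32-40=-8<0$.

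The paper instead uses $K_S^2\ge 2(g-1)(b-1)(1+\frac{1}{g})(1+\frac{g-1}{15g+1})$ from \cite{s21}, Lemma 2.7. This gives $gb\le\frac{31}{24}K_S^2$, the worst case being $g=b=2$. Hence
$(\frac{224}{3})^2g^4b^4\le(\frac{224}{3})^2(\frac{31}{24})^4(K_S^2)^4=(\frac{6727}{54})^2(K_S^2)^4<15519\,(K_S^2)^4$.

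So the paper's constant is forced by the weaker inequality, not by crude estimation. Your constant $\approx 348$ rests on the false inequality and is not justified.
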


As an application, we further obtain bounds for the automorphism group and its abelian subgroups of surfaces of general type with negative second Chern class:
\begin{thm}[see Theorem 4.3]
Let $k$ be an algebraically closed field of characteristic $p>0$, and let $S$ be a minimal smooth projective surface of general type over $k$ with $c_2(S)<0$. Then
\begin{enumerate}
\item[(i)] $|\operatorname{Aut}_k(S)|<2598\,(K_S^2)^4$.
\item[(ii)] If $G$ is an abelian subgroup of $\operatorname{Aut}_k(S)$, then $|G|<81\,(K_S^2)^3$.
\end{enumerate}
\end{thm}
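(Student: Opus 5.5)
The plan is to turn the hypothesis $c_2(S)<0$ into a fibration preserved by the whole automorphism group, and then reduce to the fibred bound of Theorem 1.1 (Corollary 4.2), sharpened using the extra numerics available when $c_2<0$.

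\textbf{Step 1: the Albanese map is a fibration.} In positive characteristic, a minimal surface of general type with $c_2(S)<0$ is known (Shepherd-Barron; also Raynaud's examples) to have its Albanese map $a\colon S\to \operatorname{Alb}(S)$ with one-dimensional image. After Stein factorization this gives a fibration $f\colon S\to B$ onto a smooth curve $B$ of genus $b\ge 2$. Moreover, the general fibre is a singular (non-smooth) rational or quasi-elliptic-type curve, i.e.\ $f$ is not generically smooth.

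\textbf{Step 2: every automorphism preserves $f$.} The Albanese map is functorial, so every $\sigma\in\operatorname{Aut}_k(S)$ induces an automorphism of $\operatorname{Alb}(S)$ that preserves the image curve. Hence $\sigma$ permutes the fibres of $f$, and $\operatorname{Aut}_k(S)=\operatorname{Aut}_f(S)$. Theorem 1.1 then already gives $|\operatorname{Aut}_k(S)|<15519(K_S^2)^4$.

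\textbf{Step 3: sharpen to $2598$.} I will go back into the proof of Corollary 4.2 using the exact sequence
\[
1\to K\to\operatorname{Aut}_f(S)\to \operatorname{Aut}(B),
\]
where $K$ acts faithfully on the generic fibre. This bounds the order by $|K|\cdot|\mathrm{im}|$.
\begin{itemize}
\item For the image, $B$ has genus $b\ge 2$, so Stichtenoth gives $|\mathrm{im}|\le \frac{224}{3}b^4$ in general, or $84(b-1)$ when the order is prime to $p$.
\item Since $c_2<0$ we have $K_S^2>8\chi(\mathcal O_S)+\ldots$, and Noether-type relations give $K_S^2\ge 8(g-1)(b-1)$.
\item For non-smooth fibrations, Tate's genus-change formula forces $p\le 3$ in the relevant cases and constrains $g$.
\end{itemize}
Substituting $b-1\le K_S^2/(8(g-1))$ and $g\ge 2$ into the product of the bounds for $K$ (a group acting on the generic fibre, bounded via Stichtenoth over $k(B)$) and for the image should give a constant of the form $c\,(K_S^2)^4$. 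Optimising over the small values of $g$ and $p$ should produce $2598$.

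\textbf{Step 4: abelian subgroups.} For an abelian $G$ the same sequence applies. Abelian groups of automorphisms of a curve of genus $h$ satisfy a linear bound, $|A|\le 4h+4$ prime to $p$, with a quadratic-type bound for $p$-groups. Using the linear bound for the image in $\operatorname{Aut}(B)$ and the corresponding bound for $G\cap K$ saves one power of $K_S^2$, giving $|G|<81(K_S^2)^3$.

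The main obstacle is Step 3: getting the explicit constant. This requires precise inequalities relating $K_S^2$, $g$ and $b$ for Albanese fibrations with $c_2<0$. I would first try $K_S^2\ge 8(g-1)(b-1)$ from the slope inequality for nonisotrivial fibrations, and handle the isotrivial and wild cases separately.
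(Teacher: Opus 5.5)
\textbf{There is a genuine gap in Step 3.} Your outer skeleton is the same as the paper's: the Albanese fibration with $b\ge 2$, $\operatorname{Aut}_k(S)=\operatorname{Aut}_f(S)$, and the sequence $1\to\operatorname{Aut}_B(S)\to\operatorname{Aut}_f(S)\to\operatorname{Aut}_k(B)$. But Step 3 fails exactly where all the work lies, namely bounding the kernel $\operatorname{Aut}_B(S)\cong\operatorname{Aut}_K(C)$ with $K=k(B)$.

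Shepherd-Barron's description says the normalization $\widetilde C$ of $\overline C=C\times_K\overline K$ is $\mathbb{P}^1_{\overline K}$, i.e.\ $g(\widetilde C)=0$. So Stichtenoth's theorem, which concerns curves of geometric genus $\ge 2$, gives nothing here. After base change, $\operatorname{Aut}_K(C)$ only embeds into the stabilizer in $\mathrm{PGL}_2(\overline K)$ of the points over the singularities. For a single cusp this stabilizer is the infinite affine group $\overline K\rtimes\overline K^{*}$. Finiteness and any bound must therefore come from the $K$-structure of the inseparable singularity.

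This is what Lemma 2.2(iii),(iv) and Sections 2--3 supply:
\begin{itemize}
\item For $t=1$, one has $\operatorname{Aut}_K(C)\cong H\rtimes M$ with $|M|\le 8g+4$.
\item The Sylow $p$-subgroup satisfies $|H|\le 2p(g-1)$. This uses the Frobenius tower $C_0\to C_1\to C_2$, the local equations of $C_1$ and $C_2$, and, for $p=2,3$ with $p_a(C_1)=1$, the finiteness of $\operatorname{Aut}_{K_2}(C_2)$.
\item Tate's genus change gives $(p-1)\mid 2g$, i.e.\ $p\le 2g+1$. It does not force $p\le 3$.
\end{itemize}
Together these give $|\operatorname{Aut}_B(S)|\le 8(g-1)(2g+1)^2$. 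Once $g(\widetilde C)=0$ is recorded, part (i) is literally Corollary 4.2(iii); no optimisation over small $g$ and $p$ is involved.

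\textbf{Your numerical input is also false in this setting.} Since $K_S^2=K_{S/B}^2+8(g-1)(b-1)$, your inequality $K_S^2\ge 8(g-1)(b-1)$ is equivalent to $K_{S/B}^2\ge 0$. This fails for fibrations with non-smooth generic fibre. In the paper's Example 4.4 one has $K_S^2=32$, $g=2$, $b=6$, so $8(g-1)(b-1)=40>K_S^2$. The paper instead uses $K_S^2\ge 2(g-1)(b-1)(1+\frac1g)(1+\frac{g-1}{15g+1})$. The constants $2598$ and $81$ come from combining this with $\frac{1792}{3}(g-1)(2g+1)^2b^4$ and with $8(g-1)(2g+1)^2(4b+4)$, respectively.

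\textbf{On part (ii).} You do not need, and have not produced, an abelian-specific bound on $G\cap\operatorname{Aut}_B(S)$. The saved power comes entirely from the base. Nakajima's bound $4b+4$ holds for every abelian subgroup of $\operatorname{Aut}_k(B)$ in characteristic $p$, not only those of order prime to $p$. So your ``quadratic bound for $p$-groups'' hedge is unnecessary.
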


Based on Theorem 1.2, the following question naturally arises.
\begin{que}
Let $k$ be an algebraically closed field $k$ of characteristic $p>0$, and let $S$ be a minimal smooth projective surface of general type over $k$ with $c_2(S)\geq 0$. Does there exist a positive integer $m$ and a constant $c$ such that
$$
|\mathrm{Aut}_k(S)| \leq c(K_S^2)^m\,?
$$
\end{que}

The paper is organized as follows. Section 2 reviews the method in \cite{s21} and the obstructions to it. Section 3 gives a local equation of the generic fiber of a fibered surface. It then completes the estimate of the order of the Sylow $p$-subgroup of the automorphism group of the generic fiber, viewed as a regular curve. Finally, Section 4 turns to surfaces of general type with negative second Chern class, completing the estimate of the orders of their automorphism groups.

\section{Preliminaries}

In this section, let $k$ be an algebraically closed field of characteristic $p>0$, $S$ a minimal smooth projective surface of general type, and $B$ a smooth projective curve of genus $b\geq 2$ over $k$.

\begin{defn}
A morphism $f\colon S \to B$ is called a \emph{fibered surface} if $f$ is flat and $f_{*}\mathcal{O}_{S}=\mathcal{O}_{B}$.
\end{defn}
The group of fibration-preserving automorphisms of $f$ is
\[
\mathrm{Aut}_{f}(S):=\{ (\sigma,\varphi) \in \mathrm{Aut}_{k}(S) \times \mathrm{Aut}_{k}(B) \mid f \circ \sigma =\varphi \circ f \},
\]
which makes the following diagram commute:
\begin{displaymath}
\xymatrix{
S\ar[r]^{\sigma} \ar[d]_{f} & S \ar[d]^{f}\\
B\ar[r]_{\varphi} & B .
}
\end{displaymath}
We set $\mathrm{Aut}_{B}(S):=\{ \sigma \in \mathrm{Aut}_{k}(S) \mid f \circ \sigma = f\}$.

From $\mathrm{Aut}_k(B)\leq \frac{224}{3}b^4$ and the exact sequence
\[
1 \rightarrow \mathrm{Aut}_{B}(S) \rightarrow \mathrm{Aut}_{f}(S) \rightarrow \mathrm{Aut}_{k}(B),
\]
bounding $|\mathrm{Aut}_{f}(S)|$ reduces to bounding $|\mathrm{Aut}_B(S)|$.

Let $C$ be the generic fiber of $f$ and $K$ the function field of $B$. Via the group isomorphism $\mathrm{Aut}_B(S) \cong \mathrm{Aut}_{K}(C)$, it suffices to study $\mathrm{Aut}_{K}(C)$. A partial estimate of $|\mathrm{Aut}_{K}(C)|$ is as follows.

\begin{lemma}[\cite{s2}, Theorem 2.1; \cite{s21}, Lemma 3.3]
Let $g$ be the arithmetic genus of $C$, $t$ the number of singular points on $\overline{C}:=C \times_{K}\overline{K}$, and $\pi \colon \widetilde{C} \to \overline{C}$ the normalization. Then the following assertions hold.
\begin{itemize}
\item[(i)] If $g(\widetilde{C}) \geq 2$, then $|\mathrm{Aut}_K(C)|\leq \frac{224}{3} g^{4}$.
\item[(ii)] If $g(\widetilde{C})=1$, then $|\mathrm{Aut}_K(C)| \leq 24 (g-1)$.
\item[(iii)] If $g(\widetilde{C})=0$ and $t \geq 2$, then $|\mathrm{Aut}_K(C)| \leq g(g-1)(8g+4)$.
\item[(iv)] If $g(\widetilde{C})=0$ and $t=1$, then $\mathrm{Aut}_K(C) \cong H \rtimes M$, where $H$ is a Sylow $p$-subgroup of $\mathrm{Aut}_K(C)$ (possibly trivial) and $M$ is a cyclic subgroup of order at most $8g+4$.
\end{itemize}
\end{lemma}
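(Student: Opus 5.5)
The plan is to pass to the base change $\overline{C}=C\times_K\overline{K}$, on which $G:=\mathrm{Aut}_K(C)$ acts faithfully by $\overline{K}$-automorphisms. Here $C$ is a regular, geometrically integral curve over $K$, since it is the generic fiber of a fibration with $f_*\mathcal{O}_S=\mathcal{O}_B$. Faithfulness holds because $\overline{K}$ is faithfully flat over $K$.

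The set of singular points of $\overline{C}$ is $G$-stable, so $G$ lifts to the normalization $\widetilde{C}$. This lift is injective, since $\pi$ is birational. The image also preserves the finite set $\pi^{-1}(\mathrm{Sing}\,\overline{C})$ together with the conductor data. From there I would split into cases by $g(\widetilde C)$, using the genus formula
\[
g=g(\widetilde C)+\sum_{P}\delta_P .
\]
Here each $\delta_P\ge 1$, and in positive characteristic $\delta_P$ is typically a multiple of $(p-1)/2$, coming from the inseparability of $\overline{K}/K$.

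\textbf{Case (i).} If $g(\widetilde C)\ge 2$, Stichtenoth's bound gives $|G|\le\frac{224}{3}g(\widetilde C)^4\le \frac{224}{3}g^4$.

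\textbf{Case (ii).} If $g(\widetilde C)=1$, then $t\ge1$, because $g\ge 2$ for a general type fibration. The group $G$ permutes the finite nonempty set $\Sigma=\pi^{-1}(\mathrm{Sing})$. Consider the stabilizer of a point of $\Sigma$. It is a subgroup of the automorphism group of an elliptic curve fixing the origin, so it has order at most $24$. The orbit of that point has size at most $|\Sigma|\le 2\sum\delta_P\le 2(g-1)$. Hence $|G|\le 24\cdot|\Sigma|$, and I would tighten the count to get $24(g-1)$, using $|\Sigma|\le \deg(\text{conductor})/1\le g-1$ where possible.

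\textbf{Case (iii).} If $g(\widetilde C)=0$, then $G\subset PGL_2(\overline K)$ permutes $\Sigma$. If $|\Sigma|\ge 3$, the kernel of the action on $\Sigma$ is trivial, because an element of $PGL_2$ fixing three points is the identity. So $G$ embeds in the symmetric group on $\Sigma$. Better, it is controlled by the stabilizer of three points modulo orbits. For the bound $g(g-1)(8g+4)$, the idea is as follows. The orbit of a point has size at most $|\Sigma|\le 2g$. The stabilizer of that point then acts on the remaining points, giving an orbit of size at most about $g-1$. Finally, the stabilizer of two points is cyclic, acting by $x\mapsto \zeta x$. Its order is bounded by $8g+4$, via the standard argument that it preserves the conductor divisor and a cusp-type local equation, as in \cite{s2}.

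\textbf{Case (iv).} If $t=1$ with a unibranch singular point, $G$ fixes the unique point $Q\in\widetilde C\cong\mathbb{P}^1$. So $G$ lies in the affine group $\{x\mapsto ax+b\}=\mathbb{G}_a\rtimes\mathbb{G}_m$. The intersection of $G$ with $\mathbb{G}_a$ is a normal $p$-group $H$, and the image of $G$ in $\mathbb{G}_m$ is a finite subgroup, hence cyclic of order prime to $p$. The extension splits by Schur–Zassenhaus, giving $G\cong H\rtimes M$. It remains to show $|M|\le 8g+4$. This uses the fact that $M$ acts faithfully on the local ring, that is, on the conductor data at the singularity. The genus contribution $\delta\le g$ constrains which eigencharacters can occur on the generators of $\mathcal{O}_{\overline C,P}$ in $\overline{K}[[x]]$.

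The main obstacle is this last numerical bound $8g+4$ on the cyclic part (and the analogous one in (iii)). It requires explicit knowledge of the semigroup or conductor of the singularity, and I would take it from \cite{s2}.
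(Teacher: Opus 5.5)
The paper gives no proof of this lemma; it simply quotes \cite{s2} and \cite{s21}. Your architecture is reasonable: embed $G=\mathrm{Aut}_K(C)$ in $\mathrm{Aut}(\widetilde C)$, split by $g(\widetilde C)$, use orbit--stabilizer on $\Sigma=\pi^{-1}(\mathrm{Sing}\,\overline C)$, and in (iv) use $G\subset\mathbb{G}_a\rtimes\mathbb{G}_m$ with Schur--Zassenhaus. Case (i) is fine. However, the counts in (ii) and (iii) do not produce the stated constants. From $\delta_P\ge 1$ and $\delta_P\ge(\#\text{branches at }P)-1$ you only get $|\Sigma|\le 2(g-g(\widetilde C))$. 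That gives $48(g-1)$ in (ii) and about $2g(2g-1)(8g+4)$ in (iii). Your proposed tightening is false: $\overline C$ is Gorenstein, being a base change of a regular curve, so the conductor has degree exactly $2\sum_P\delta_P$. In case (ii) this is $2(g-1)$, not at most $g-1$.

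The missing fact is that every singular point of $\overline C$ is unibranch. Let $K^{s}$ be the separable closure of $K$. Then $C_{K^{s}}$ is still regular, and $\overline K(\overline C)/K^{s}(C)$ is purely inseparable. So each discrete valuation of $K^{s}(C)$ extends uniquely, and $\widetilde C\to\overline C$ is bijective. Hence $|\Sigma|=t\le\sum_P\delta_P=g-g(\widetilde C)$. This gives $|G|\le 24t\le 24(g-1)$ in (ii), and $|G|\le t(t-1)\,|G_{Q_1,Q_2}|\le g(g-1)\,|G_{Q_1,Q_2}|$ in (iii). It also justifies the unibranch hypothesis you silently add in (iv); without it $G$ could swap branches and need not lie in an affine group.

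The second gap is the cyclic bound, which you defer entirely to \cite{s2}; ``eigencharacters on generators of $\mathcal{O}_{\overline C,P}$'' is not an argument. Here is a direct proof. Let $M$ be cyclic of order $n$ prime to $p$, fixing the point $Q\in\widetilde C\cong\mathbb{P}^1$ over a singular point. Then $M$ is diagonalizable, so it fixes exactly one other point $Q'$. Since $M$ is defined over $K$, apply the Hurwitz formula (\cite{s8}, Theorem 7.4.16, as in Lemma 3.10) to the tame quotient $C\to C/M$.

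Let $x,x'$ be the closed points of $C$ under $Q,Q'$. An element fixing a geometric point acts trivially on the residue field, so the inertia at $x$ and $x'$ is all of $M$. The residue extension is then purely inseparable of degree dividing $n$, hence trivial, so $e=n$ and $d=n-1$. Moreover $k(x)/K$ is inseparable, since otherwise regularity at $x$ would give smoothness there. So $[k(x):K]\ge p\ge 2$, and if $x=x'$ then $[k(x):K]\ge 2p$.

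Therefore $2g-2\ge -2n+3(n-1)$, that is, $n\le 2g+1\le 8g+4$. This bounds $M$ in (iv). It also bounds $G_{Q_1,Q_2}$ in (iii), which is cyclic of order prime to $p$ and fixes a singular point. The bound $2g+1$ is attained in Example 4.4, where $g=2$ and $M\cong\mathbb{Z}_5$.
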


\begin{rmk}
According to Lemma 2.2(iv), when $\overline{C}$ is a rational curve with a unique singular point, we must bound the Sylow $p$-subgroup $H$ of $\mathrm{Aut}_K(C)$. Under the assumption $p \geq 5$, it was shown in \cite{s21} that $|H|\leq p(2g-2)$.
\end{rmk}

\noindent\textbf{Review of the approach.} Let us recall the approach of \cite{s21}. First, when $\overline{C}$ is a rational curve with a unique singular point, there exists a positive integer $n$ such that the following conditions hold (see \cite{s13}, Proposition 3, Corollary 5 and 6).

\begin{itemize}
\item[(i)] The diagrams
\begin{displaymath}
\xymatrix{
S_{0} \ar[r]^{h_{0}} \ar[d]_{f_{0}} & S_{1} \ar[r]^{h_{1}} \ar[d]_{f_{1}}& \cdots
\ar[r]^{h_{n-2}}& S_{n-1} \ar[r]^{h_{n-1}} \ar[d]_{f_{n-1}} & S_{n}=S \ar[d]^{f_{n}=f}\\
B_{0} \ar[r]^{F_{0}} & B_{1}\ar[r]^{F_{1}} & \cdots  \ar[r]^{F_{n-2}} & B_{n-1} \ar[r]^{F_{n-1}} & B_{n}=B
}
\end{displaymath}
are commutative. Here $F_{i}\colon B_{i} \to B_{i+1}$ is the relative Frobenius morphism, $S_{i}$ is the normalization of $S_{i+1} \times_{B_{i+1}}B_{i}$, and $h_{i}\colon S_{i} \to S_{i+1}$ is the composition of the normalization $S_{i} \to S_{i+1} \times_{B_{i+1}}B_{i}$ with the projection $S_{i+1} \times_{B_{i+1}}B_{i} \to S_{i+1}$.

\item[(ii)] The generic fiber $C_0$ of $f_{0}$ is a smooth rational curve.

\item[(iii)] The generic fiber $C_1$ of $f_{1}$ is a rational curve with a unique non-smooth point of type $y^{p}=x^{r}$, where $(p,r)=1$ and $r\geq2$.
\end{itemize}

\begin{rmk}
Let $C_i$ be the generic fiber of $f_i$ and $K_i$ the function field of $B_i$. Then $C_i$ is the normalization of $C_{i+1} \times_{K_{i+1}}K_{i}$, $K_i=K_{i+1}^{1/p}$, and
$$
H \subseteq \mathrm{Aut}_{K}(C)= \mathrm{Aut}_{K_{n}}(C_{n}) \subseteq \mathrm{Aut}_{K_{n-1}}(C_{n-1})\subseteq \cdots \subseteq \mathrm{Aut}_{K_{0}}(C_{0}).
$$
\end{rmk}

Based on this remark, we can bound $|H|$ by describing the local equations of $C_1$. Lemma 2.5 below gives a concrete characterization of the local equation of $C_1$ (see Remark 3.2 and Lemma 3.1 for the proof idea).

\begin{lemma}
The local equation of $C_1$ at its unique non-smooth point $Q_1=(x_0,y_0)$ is, up to isomorphism, $y^p=P(x)$, where $P(x)$ has two possibilities.
\begin{itemize}
\item[(i)] If $x_0 \in K_1$,
\begin{equation}
y^p=P(x):=x^{r}+b_{l_0}+\sum _{i=1}^{m} b_{l_i}x^{l_ip},
\end{equation}
with $r\geq 2$, $(r,p)=1$, $b_{l_0} \in K_1\setminus K_1^p$, $0<l_1<\cdots <l_m$, and $b_{l_i} \in K_1\setminus K_1^{*p}$ for $1\leq i\leq m$.
\item[(ii)] If $x_0 \notin K_1$, there exist positive integers $s$ and $t$ such that $x_0^{p^s}\in K_1\setminus K_1^p$ and
\begin{equation}
y^p=P(x):=x(x^{p^s}-x_0^{p^s})^t+\varphi(x),
\end{equation}
where $\varphi(x) \in K_1[x^p]$ and no monomial of $\varphi(x)$ belongs to $K_1^p[x^p]\setminus \{0\}$.
\end{itemize}
\end{lemma}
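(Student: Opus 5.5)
We need a local equation for $C_1$ at $Q_1$. By item (iii) of the review, $C_1$ is a regular curve whose base change to $\overline{K_1}$ is rational with a unique non-smooth point, locally of type $y^p=x^r$. By Remark 2.4, $C_0$ is the normalization of $C_1\times_{K_1}K_1^{1/p}$ and is smooth; this means $C_1$ becomes non-normal after a single purely inseparable extension of degree $p$. The plan is to write an affine model of $C_1$ near $Q_1$ as a purely inseparable cover of degree $p$ of a smooth curve. Since $K_1(C_1)\otimes K_1^{1/p}$ is non-reduced after normalization, there is an element $y$ of the function field with $y^p\in K_1(x)$. Here $x$ is a suitable coordinate on the quotient line, which is smooth and rational because $C_0$ is. Concretely, the $p$-th power map identifies $K_1(C_1)^p K_1$ with a rational function field $K_1(x)$, and $K_1(C_1)=K_1(x)(y)$ with $y^p=P(x)\in K_1[x]$ after clearing denominators near $Q_1$.

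Next we normalize $P$. Any monomial of $P$ lying in $K_1^p[x^p]$ can be absorbed by replacing $y$ with $y-c$, where $c^p$ is that monomial. So we may assume no monomial of $P$ lies in $K_1^p[x^p]\setminus\{0\}$. Now split $P=P_1+\varphi$, where $\varphi\in K_1[x^p]$ and $P_1$ collects the monomials $x^j$ with $p\nmid j$.

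We then split into cases according to whether the closed point $Q_1$ has residue field coordinate $x_0\in K_1$.

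\emph{Case (i), $x_0\in K_1$.} Translate so that $x_0=0$. Regularity of $C_1$, together with the geometric singularity $y^p=x^r$ over $\overline{K_1}$, forces the following: $P_1$, after a $p$-power-compatible change of $y$, has lowest-order term $x^r$ with $(r,p)=1$, and no other monomials with exponent prime to $p$. Any such extra monomial would change the geometric type, or could be removed by a substitution $x\mapsto x+(\text{higher})$ and rescaling. Rescaling $x$ makes the leading coefficient $1$. The remaining terms lie in $K_1[x^p]$ and have the form $b_{l_0}+\sum b_{l_i}x^{l_ip}$ with coefficients not in $K_1^{*p}$. The constant $b_{l_0}$ must lie outside $K_1^p$: otherwise $Q_1$ would be a $K_1$-rational point of the form $y^p=x^r$, which is non-regular, contradicting the regularity of $C_1$. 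Non-regularity follows from the Jacobian criterion, since both partials vanish at that point.

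\emph{Case (ii), $x_0\notin K_1$.} Then $x_0$ is purely inseparable over $K_1$, because after the Frobenius base change the point becomes a single geometric point. Take the minimal $s$ with $x_0^{p^s}\in K_1$, so that $x_0^{p^s}\notin K_1^p$, and the minimal polynomial of $x_0$ is $x^{p^s}-x_0^{p^s}$. The prime-to-$p$ part $P_1$ must vanish to the appropriate order at $Q_1$. Writing $P_1=x\cdot Q(x^p)$ after using the freedom in the choice of $x$, we expand $Q$ in powers of the minimal polynomial. The geometric type $y^p=x^r$ then forces $Q=(x^{p^s}-x_0^{p^s})^t$ up to units absorbed into $\varphi$ and $y$. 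This yields $P=x(x^{p^s}-x_0^{p^s})^t+\varphi(x)$.

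The main obstacle is this last step in case (ii): showing that the prime-to-$p$ part can be brought exactly to $x(x^{p^s}-x_0^{p^s})^t$ using only coordinate changes over $K_1$. I would first try comparing the geometric local equation at $\overline{Q_1}$ with the expansion of $P_1$ in the uniformizer $x-x_0$ over $\overline{K_1}$. I would then show that any discrepancy either contradicts uniqueness of the singular point or lies in $K_1[x^p]$.
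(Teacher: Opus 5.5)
Your setup matches the paper. That covers the degree-$p$ purely inseparable model $y^p=P(x)$, the absorption of monomials lying in $K_1^p[x^p]$, and the argument that $b_{l_0}\notin K_1^p$ because a $K_1$-rational non-smooth point cannot lie on the regular curve $C_1$.

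\textbf{The gap.} It lies in the two structural claims that actually carry the lemma. In (i), the claim is that $P$ contains exactly one monomial $x^r$ with $p\nmid r$. In (ii), the claim is that the prime-to-$p$ part of $P$ is exactly $x(x^{p^s}-x_0^{p^s})^t$. You try to get these from the local analytic type $y^p=x^r$ at $Q_1$ (plus regularity), and that cannot work: higher-order prime-to-$p$ monomials do not change the local type.

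For example, take $p=3$ and $b\notin K_1^3$. The curve $y^3=x^2+x^4+b$ still has a singularity of type $Y^3=u^2$ over $x=0$, where $u=x\sqrt{1+x^2}$ is an analytic coordinate. Nor can $x^4$ be removed by a substitution $x\mapsto x+(\text{higher})$: such a map is not an automorphism of the affine line, so it does not give an isomorphic model. Such a monomial is not removable; it simply cannot occur, and the reason is global, not local. In (ii), the steps ``$P_1=x\,Q(x^p)$ by the freedom in $x$'' and ``up to units absorbed into $\varphi$ and $y$'' are likewise unjustified, as you note yourself.

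\textbf{The missing idea.} Apply the Jacobian criterion to $y^p-P(x)$. Since $\partial_y(y^p-P)=py^{p-1}=0$, the singular points of $y^p=P(x)$ over $\overline{K_1}$ lie exactly over the roots of $P'(x)$. So uniqueness of the singular point of $\overline{C_1}$ forces $P'(x)=c\,(x-x_0)^{r-1}$. In the example above, $P'=x(x-1)(x+1)$, which produces two further singular points.

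In (i), after translating to $x_0=0$, this reads $P'=c\,x^{r-1}$. Hence $ia_i=0$ for all $i\neq r$, and $r\ge2$ because $Q_1$ is singular.

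In (ii), write $r-1=p^{s_0}t_0$ with $p\nmid t_0$. Then $P'=c\,(x^{p^{s_0}}-x_0^{p^{s_0}})^{t_0}\in K_1[x]$. The coefficient $-c\,t_0x_0^{p^{s_0}}$ forces $x_0^{p^{s_0}}\in K_1$, hence $s_0\ge1$ since $x_0\notin K_1$. Taking $s$ minimal and $t=(r-1)/p^s$ gives $P'=c\,(x^{p^s}-x_0^{p^s})^t$. Since $\frac{d}{dx}\bigl[x(x^{p^s}-x_0^{p^s})^t\bigr]=(x^{p^s}-x_0^{p^s})^t$, integrating gives exactly $P=c\,x(x^{p^s}-x_0^{p^s})^t+\varphi(x)$ with $\varphi\in K_1[x^p]$. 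No unit bookkeeping is needed, and the pure inseparability of $x_0$ comes out automatically.

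\textbf{A small further point.} To make $c=1$ you must rescale $x$ and $y$ together: $x\mapsto c^{-u}x$, $y\mapsto c^{v}y$ with $ur+vp=1$, using $(r,p)=1$. Rescaling $x$ alone would need $c^{1/r}\in K_1$.
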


By analyzing the local equation of $C_1$ in Lemma 2.5 (see Lemma 3.10 for a similar proof idea), we obtain the following lemma.

\begin{lemma}[see \cite{s21}, Lemma 3.4]
Let $H_1$ be the Sylow $p$-group of $\mathrm{Aut}_{K_1}(C_1)$. If $p_a(C_1)\geq 2$ and $|H_1| > p(2p_a(C_1)-2)$, then $p\mid(2p_a(C_1)-2)$ and $C_1$ possesses infinitely many $K_1$-rational points.
\end{lemma}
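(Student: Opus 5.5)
We would prove Lemma 2.6 by contraposition, using the explicit local equations of Lemma 2.5. Let $H_1$ act on $C_1$ over $K_1$. Since $C_1$ has a unique non-smooth point $Q_1$, every element of $H_1$ fixes $Q_1$. After base change to $\overline{K_1}$ and normalization we get $\mathbb{P}^1$ (indeed $C_0$ is a smooth rational curve with $C_0\otimes\overline{K}$ rational), and $H_1$ embeds into the stabilizer of the unique point lying over $Q_1$. We may therefore realize $H_1$ as a group of affine substitutions $x\mapsto x+c$ (a $p$-group fixing $\infty$ in $\mathrm{PGL}_2$ is unipotent) on the affine chart in which the equation $y^p=P(x)$ of Lemma 2.5 holds, with $y\mapsto y+q(x)$ correspondingly. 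We then write down what it means for such a substitution to preserve the equation.

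First, in case (i) of Lemma 2.5, substitute $x\mapsto x+c$ into $P$. We require
\[
P(x+c)-P(x)\in \bigl(K_1[x]\bigr)^p ,
\]
up to the allowed change of $y$. Look at the term $x^r$ with $(r,p)=1$. The expansion $(x+c)^r-x^r$ contains the monomial $rc\,x^{r-1}$, and this must be a $p$-th power (or be cancelled). Comparing $p$-adic digits of the exponents forces $c$ to satisfy an additive polynomial equation $\sum a_j c^{p^j}=0$. The degree of that equation is controlled by $r$ and by the $l_ip$, and the coefficients $b_{l_i}\notin K_1^{*p}$ pin down the admissible $c$. The number of solutions is then bounded by the degree of this additive polynomial, which we will relate to $r$, and hence to $p_a(C_1)$, via the genus formula $2p_a(C_1)-2=(p-1)(r-1)-\ldots$ for $y^p=x^r+\cdots$. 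This yields $|H_1|\le p(2p_a(C_1)-2)$ unless a degenerate situation occurs.

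Case (ii) is handled the same way with the factor $x(x^{p^s}-x_0^{p^s})^t$. Here the translations must permute the roots of $x^{p^s}-x_0^{p^s}$. Since $x_0^{p^s}\in K_1\setminus K_1^p$, this forces $c$ to lie in a finite additive group of order at most about $p^s t$, which again is bounded by the genus.

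The main obstacle is extracting the dichotomy in the conclusion. The degenerate case is the one where the additive polynomial governing $c$ has more than $p(2p_a-2)$ roots. We expect this to happen only when the coefficient constraints let $c$ range over $K_1$-rational solutions, i.e.\ the orbit of a point under $H_1$ consists of $K_1$-points. In that case we show that translating a single rational point by elements of $H_1$, together with the $y$-adjustment, produces infinitely many $K_1$-points, for instance by exhibiting a parametric family $x\in K_1^p$-affine solutions. Simultaneously, divisibility $p\mid 2p_a-2$ should come from the genus formula: the large-group case forces $r\equiv 1$ or the exponents $l_ip$ to dominate, which makes $2p_a-2=(p-1)(\deg P-1)-\ldots$ a multiple of $p$. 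We would first try to prove the count bound via the Riemann--Hurwitz formula for the quotient $C_1/H_1$ over $\overline{K_1}$, with a single wildly ramified point. In that setting $2p_a-2\ge |H_1|\cdot(\text{different contribution})-2|H_1|$ gives the bound, and the divisibility condition emerges from integrality of the higher ramification jumps.
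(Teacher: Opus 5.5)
Your proposal has a genuine gap, and it starts with the normal form. In the chart of Lemma 2.5(i) the non-smooth point $Q_1$ lies at $x=0$, not at infinity. A translation $x\mapsto x+c$ with $c\neq 0$ therefore cannot fix it. Concretely, differentiating your condition $P(x+c)-P(x)\in K_1[x]^p$ gives $r\bigl((x+c)^{r-1}-x^{r-1}\bigr)=0$, hence $c=0$. Taken literally, your set-up proves $H_1=\{\mathrm{id}\}$, which is not true in general. The $p$-elements actually have the form $\sigma x=x/(1+\theta x)$, $\sigma y=f_0(x)+y/(1+\theta x)^{\lambda}$, as recalled in Section 2; see also the automorphisms $x\mapsto x/(1+cx)$ built there from two rational points. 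The same failure occurs in case (ii), where $x^{p^s}-x_0^{p^s}=(x-x_0)^{p^s}$ has a single root.

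You could try to repair this by moving $Q_1$ to infinity ($u=1/x$, $Y=yu^{\lambda}$). Then $H_1$ does act by $u\mapsto u+\theta$, $Y\mapsto Y+q(u)$. However, the resulting constraints on $\theta$ are not polynomial equations. They say that certain coefficients built from $\theta$ and the $b_{l_i}$ lie in $K_1^p$ (compare $(*)$ and $(**)$). Such conditions cut out additive subgroups of $K_1$ with no degree bound, and these can be infinite; that is exactly the situation created by rational points. So there is no additive polynomial whose degree bounds $|H_1|$, and the comparison with $p(2p_a(C_1)-2)$ is never actually made. The dichotomy, which is the whole content of the lemma, is left at ``we expect''. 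Two smaller points: in case (i) one has $2p_a(C_1)-2=p(r-1)-(r+1)$, so the relevant congruence is $r\equiv -1$, not $r\equiv 1 \pmod p$. Also, the translation normal form with $\lambda=(r+1)/p$ already presupposes this divisibility.

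Your fallback via Riemann--Hurwitz over $\overline{K_1}$ cannot work either. After base change and normalization the curve is $\mathbb{P}^1_{\overline{K_1}}$, whose Hurwitz formula does not involve $p_a(C_1)$ at all. Indeed, every finite subgroup of $(\overline{K_1},+)$ acts on $\mathbb{P}^1$ by translations with a single fixed point, so no bound on $|H_1|$ can come from there.

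The route the paper relies on is the proof of Lemma 3.4 in \cite{s21}, whose pattern is reproduced in Lemma 3.10. It applies the Hurwitz formula to the quotient $\pi\colon C_1\to C_1/H_1$ as a morphism of regular curves over the imperfect field $K_1$, with differents weighted by the residue degrees $[k(z):K_1]$; this is where $p_a(C_1)$ enters. The hypothesis $|H_1|>p(2p_a(C_1)-2)$ then forces $p_a(C_1/H_1)=0$. The $K_1$-rational points are extracted by examining the closed points $(c,P(c)^{1/p})$, $c\in K_1$, of the local equation, as in the cases of Lemma 3.10.

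The divisibility can also be read off from this formula. Once $H_1\neq 1$, every term is a multiple of $p$:
\begin{itemize}
\item non-trivial orbits have $p$-power length;
\item $k(Q_1)$ is a non-trivial purely inseparable extension of $K_1$;
\item any other $H_1$-fixed closed point has a number of geometric points divisible by $p$, because each non-trivial $p$-element of $\mathrm{PGL}_2(\overline{K_1})$ has a unique fixed point, and that point lies over $Q_1$.
\end{itemize}
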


Lemma 2.6 allows us to estimate $H_1$ (or $H\subseteq H_1$) under the following two assumptions:
\begin{enumerate}
\item[I.] $p\mid(2p_a(C_1)-2)$;
\item[II.] $C_1$ possesses infinitely many $K_1$-rational points.
\end{enumerate}
When $p \neq 2$ and $p_a(C_1)\geq 2$, Assumption I implies that the local equation of $C_1$ must be Equation (1). Indeed, the arithmetic genus given by Equation (2) is $\frac{1}{2}(p-1)p^{s}t$, which does not satisfy $p \mid (2p_a(C_1)-2)$. Hence, $p_a(C_1)=\frac{1}{2}(r-1)(p-1)$, and $p \mid (2p_a(C_1)-2)$ holds if and only if $p \mid (r+1)$.

When $p\geq 5$, we have $p_a(C_1)\geq 2$, so $S_1$ is a surface of general type and hence $\mathrm{Aut}_{K_1}(C_1)$ is finite. Together with Assumption II, this finiteness rules out the case in which $r+1\geq l_m p$ and the integers
\[
r+1,\quad r+1-l_1 p,\quad r+1-l_2 p,\quad \cdots,\quad r+1-l_m p
\]
appearing in Equation (1) are all powers of $p$. Otherwise, there would exist integers $u>u_1>u_2>\cdots >u_m$ (with the convention $u_m=-\infty$ if $r+1=l_m p$) such that $r+1=p^u$, $l_i p=p^{u}-p^{u_i}$,
and the local equation would take the form
$$
y^p=x^{p^u-1}+b_{l_0}+\sum_{i=1}^{m}b_{l_i}x^{p^u-p^{u_i}}.
$$
By Assumption II, this would force $C_1$ to admit infinitely many automorphisms:
for any two $K_1$-rational points $(x_1,y_1)$ and $(x_2,y_2)$ with $x_1\neq x_2$, the map
\[
\begin{cases}
\sigma x=\dfrac{x}{1+cx}, \\[7pt]
\sigma y=\dfrac{dx^{p^{u-1}}+y}{(1+cx)^{p^{u-1}}},
\end{cases}
\]
is an automorphism of $C_1$, where $c:=1/x_1-1/x_2$ and $d:=y_1/x_1^{p^{u-1}}-y_2/x_2^{p^{u-1}}$.
This is a contradiction.

Note that every element of $H_1$ can be written as (see \cite{s3}, Lemma 3.2)
\[
\begin{cases}
\sigma x=\dfrac{x}{1+\theta x}, &\theta \in K_1\\[7pt]
\sigma y=f_0(x)+\dfrac{y}{(1+\theta x)^{\lambda}},&f_0(x) \in K_1(x)
\end{cases}
\]
where $\lambda:=(r+1)/p$ and $f_0(x)$ satisfies
\begin{equation*}(*)\,\,\,\,\,\,\,\,\,\,\,\,\,\,\,\,\,
\begin{aligned}
f_0(x)^p (1+\theta x)^{Mp}=&(\theta x^{\lambda p}-\sum_{i=1}^m b_{l_i}x^{l_i p}-b_{l_0})(1+\theta x)^{Mp-\lambda p}\\
&+\sum_{i=1}^m b_{l_i}x^{l_i p}(1+\theta x)^{Mp-l_i p}+b_{l_0}(1+\theta x)^{Mp}
\end{aligned}\,\,\,\,\,\,\,\,\,\,\,\,\,\,\,\,\,\,\,\,\,\,\,\,\,
\end{equation*}
with $M:=\mathrm{max}(l_m, \lambda)$.
Thus, the coefficient of the highest-degree monomial on the right-hand side of $(*)$ must lie in $K_1^p$, which implies that $H_1=\{\mathrm{id}\}$ when $M=l_m$. Therefore, we only need to consider the case $l_m \leq \lambda$, in which $(*)$ becomes
\begin{equation*}(**)\,\,\,\,\,
\begin{aligned}
f_0(x)^p (1+\theta x)^{\lambda p}=&\theta x^{\lambda p}+\sum_{i=1}^m b_{l_i}x^{l_i p}[(1+\theta x)^{\lambda p-l_i p}-1]
+b_{l_0}[(1+\theta x)^{\lambda p}-1].
\end{aligned}
\end{equation*}
By Assumption II, the integers $\lambda p,\,\, \lambda p-l_1 p,\,\, \lambda p-l_2 p,\, \cdots,\, \lambda p-l_m p$ are not all powers of $p$,; hence the coefficient of the second-highest-degree monomial on the right-hand side of $(**)$ is nonconstant, and its coefficient must lie in $K_1^p$. Using this fact, one can complete the estimate for $|H_1|$.

Therefore, under the condition $p \geq 5$, we have proved that 
$$
|\mathrm{Aut}_f(S)| < 15658\,(K_S^2)^4.
$$

\medskip
\noindent\textbf{Obstacles in the the above approach.} When $p = 2$ or $3$, the approach in \cite{s21} encounters two obstacles: 
\begin{enumerate}
\item[1)] the local equation of $C_1$ may take the form of Equation (2).
\item[2)] when $p_a(C_1) = 1$, $\mathrm{Aut}_{K_1}(C_1)$ is not necessarily finite.
\end{enumerate}

\begin{rmk}
In fact, the difficulty caused by the above obstacles arises only in the case $p_a(C_1)=1$. When $p_a(C_1)\geq 2$, the finiteness of $\mathrm{Aut}_{K_1}(C_1)$ remains valid. By applying the method of \cite{s21} (as in Example 2.8), one similarly proves that in the cases $p=2$ or $3$, the Sylow $p$-subgroup $H$ in Lemma 2.2 (iv) satisfies $|H|\leq p(2p_a(C)-2)$.
\end{rmk}

\begin{example}
Assume that $\mathrm{char}(K_1)=2$, and let the local equation of $C_1$ be
$$
y^2=x(x^2+\beta)^3+\gamma_0+\gamma_1(x^2+\beta),
$$
where $\beta,\gamma_0,\gamma_1 \in K_1 \setminus K_1^2$. Let $H_1$ be the Sylow $p$-group of $\mathrm{Aut}_{K_1}(C_1)$. Every nontrivial element of $H_1$ can be written as (see \cite{s3}, Lemma 3.2)
\[
\begin{cases}
\sigma x=\dfrac{x+\beta \theta}{1+\theta x}, &\theta \in K_1\\[7pt]
\sigma y=f_0(x)+f_1(x) y,&f_0(x),f_1(x) \in K_1(x)
\end{cases}
\]
Thus, 
$$
\begin{aligned}
&\,\,\,\,\,\,\,\,f_0(x)^2+f_1(x)^2[x(x^2+\beta)^3+\gamma_0+\gamma_1(x^2+\beta)]\\
&=\frac{\theta (1+\beta \theta^2)^3(x^2+\beta)^4+(1+\beta \theta^2)^4 x(x^2+\beta)^3}{(1+\theta x)^8}+\gamma_0+\frac{\gamma_1 (1+\beta \theta^2)(x^2+\beta)}{(1+\theta x)^2}.
\end{aligned}
$$
Comparing the coefficients of the term $x(x^2+\beta)^3$, we have $f_1=(1+\beta \theta^2)^2/(1+\theta x)^4$. Substituting this into the above equation yields
$$
\begin{aligned}
f_0(x)^2(1+\theta x)^8=&\theta(1+\beta \theta^2)^3(x^2+\beta)^4+\gamma_0 [(1+\theta^2 x^2)^4+(1+\beta \theta^2)^4]\\
&+\gamma_1(1+\beta \theta^2)(x^2+\beta)[(1+\theta^2 x^2)^3+(1+\beta \theta^2)^3]\\
\end{aligned}
$$
On the right-hand side of the above equation, the exponent 3 in $(1+\theta^2 x^2)^3$ is not a power of the characteristic 2. Hence, the existence of infinitely many $K_1$-rational points on $C_1$ does not cause $C_1$ to admit infinitely many automorphisms. Moreover, the right-hand side of this equation equals
$$
[\theta(1+\beta \theta^2)^3+\gamma_0 \theta^8+ \gamma_1(1+\beta \theta^2)\theta^6]\,x^8+\gamma_1(1+\beta \theta^2)^2 \theta^4 \,x^6+\text{lower degree terms}.
$$

Therefore, the coefficient of $x^6$, namely $\gamma_1(1+\beta\theta^2)^2\theta^4$, lies in $K_1^2$. But $\gamma_1 \in K_1\setminus K_1^2$, this forces $\theta =0$, and hence $\sigma =\mathrm{id}$.
\end{example}

To overcome the above obstacles, a natural idea is to look for a new finiteness condition. Observe that $\mathrm{Aut}_{K_{2}}(C_{2})$ is finite, so we can bound $|H|$ by characterizing the local equations of $C_2$. As the normalization of $C_{2} \times_{K_{2}}K_{0}$ is $C_0\cong \mathbb{P}_{K_0}^1$, the local equation of $C_2$ at its unique non-smooth point $Q_2$ has the following rough form:
\begin{equation}
w^{p^2}=a_nz^n+a_{n-1}z^{n-1}+\cdots +a_1z+a_0,
\end{equation}
where $(w,z)$ are local coordinates at $Q_2$ and $a_i \in K_2$.


\section{Local equations of the generic fiber of a surface fibration}
In this section, let $C_i$ be the generic fiber of $f_i$ in Remark 2.4, $K_i$ the corresponding base field, and $p$ the characteristic of these fields. We denote by $Q_1=(x_0,y_0)$ and $Q_2=(z_0,w_0)$ the unique non-smooth points of $C_1$ and $C_2$, respectively. We say $Q_1$(resp. $Q_2$) is \emph{semi-rational} if $x_0\in K_1$(resp. $z_0 \in K_2$).

\subsection{The local equation of $C_1$ and $C_2$}
\begin{lemma}
The local equation of $C_2$ at $Q_2=(z_0,w_0)$ is, up to isomorphism, $w^{p^2}=\widetilde{P}(z)$, where $\widetilde{P}(z)$ has two possibilities.
\begin{itemize}
\item[(i)] If $Q_2$ is semi-rational, then
$$
\widetilde{P}(z)=z^{r}+\widetilde{a}_{l_0}+\sum _{i=1}^{m} \widetilde{a}_{l_i} z^{l_ip},
$$
with $\widetilde{a}_{l_0} \in K_2\setminus K_2^{p^2}$, $r\in \mathbb{Z}_{>1}$, $(r,p)=1$, $0<l_1<l_2<\cdots<l_m$, and $\widetilde{a}_{l_i}z^{l_i p} \in K_2[z^p]\setminus(K_2^*[z])^{p^2}$ for $1 \leq i\leq m$.
\item[(ii)] If $Q_2$ is not semi-rational, then there exist positive integers $s$ and $t$ such that $\widetilde{\beta}:=z_0^{p^s}\in K_2\setminus K_2^p$ and
$$
\widetilde{P}(z)=z(z^{p^s}-\widetilde{\beta})^t+\widetilde\varphi(z),
$$
where $\widetilde\varphi(z) \in K_2[z^p]$ and no monomial of $\widetilde\varphi(z)$ belongs to $K_2^{p^2}[z^{p^2}]\setminus \{0\}$.
\end{itemize}
\end{lemma}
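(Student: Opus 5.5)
We start from the rough form (3): since $C_0$, the normalization of $C_2\times_{K_2}K_0$ with $K_0=K_2^{1/p^2}$, is $\mathbb{P}^1_{K_0}$, the local ring of $C_2$ at $Q_2$ is generated over $K_2$ by a coordinate $z$ and an element $w$ with $w^{p^2}\in K_2[z]$. This gives an equation $w^{p^2}=\widetilde P(z)=\sum a_i z^i$. The plan is to normalize $\widetilde P$ by two kinds of changes of variables, imitating the argument behind Lemma 2.5 for $C_1$ but with $p^2$ in place of $p$. The first kind is $w\mapsto w-h(z)$ with $h\in K_2[z]$, which subtracts $h(z)^{p^2}$, i.e. removes any element of $(K_2[z])^{p^2}=K_2^{p^2}[z^{p^2}]$ from $\widetilde P$. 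The second kind is an affine change $z\mapsto \alpha z+\gamma$ together with scaling $w$.

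The first step is to use that $Q_2$ is the unique non-smooth point and that $C_2$ is regular there. By the Jacobian criterion, non-smoothness of $w^{p^2}-\widetilde P(z)$ occurs exactly where $\widetilde P'(z)=0$. Write $\widetilde P(z)=\sum_{j=0}^{p-1} z^j\psi_j(z^p)$. Then $\widetilde P'$ is, up to $p$-th powers, governed by the components $\psi_j$ with $p\nmid j$. Uniqueness of the non-smooth point forces the part of $\widetilde P$ outside $K_2[z^p]$ to be essentially a single term whose derivative vanishes only at $z_0$, namely $c\,z(z-z_0)^{N}$ up to a $p$-th power. Expanding $(z-z_0)^N$ with $N=p^s t'$ and $p\nmid t'$, after a translation this part takes the shape $z(z^{p^s}-z_0^{p^s})^t$. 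Here $s$ is chosen minimal with $z_0^{p^s}\in K_2$; this is where $z_0$ fails to be rational, and we get $\widetilde\beta=z_0^{p^s}\in K_2\setminus K_2^p$.

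In case (i), $z_0\in K_2$, so we translate $z\mapsto z+z_0$ and the singular part becomes a monomial $z^r$ with $(r,p)=1$, after scaling by $\alpha$. The remaining terms lie in $K_2[z^p]$. We strip off every monomial lying in $(K_2^*[z])^{p^2}$ by the first kind of change, and write what is left as $\widetilde a_{l_0}+\sum\widetilde a_{l_i}z^{l_ip}$. If $\widetilde a_{l_0}$ were in $K_2^{p^2}$ it would be removable, so $\widetilde a_{l_0}\in K_2\setminus K_2^{p^2}$; a nonzero constant term must survive, since otherwise $Q_2$ would be rational and hence smooth for a regular curve with this equation, and $r>1$ is needed for non-smoothness. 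In case (ii) the same stripping leaves $\widetilde\varphi\in K_2[z^p]$ with no monomial in $K_2^{p^2}[z^{p^2}]\setminus\{0\}$.

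The main obstacle is the first step: rigorously showing that the non-$K_2[z^p]$ part collapses to a single term $z(z^{p^s}-\widetilde\beta)^t$. With exponent $p^2$, the components $\psi_j$ for different $j$ interact through $p$-th but not $p^2$-th powers. I would handle this by first passing to $C_1$, whose form is known by Lemma 2.5. Writing $C_1$ as the normalization of $C_2\times K_1$ lets us take $y=w^p$-type relations, so that $x$ can be identified with $z$ (or a translate), and $P(x)$ equals $\widetilde P(z)$ read over $K_1=K_2^{1/p}$. Comparing $\widetilde P(z)$ with the normalized $P$ modulo $K_1^p[z^p]$ then transfers the shape of the $p\nmid j$ part from Lemma 2.5 to $C_2$.
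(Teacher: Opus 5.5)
\textbf{There is a genuine gap: the step you flag as the main obstacle is never established, and your proposed route for it fails.} Your outline matches the paper: Jacobian criterion, then normalize by $w\mapsto w-h(z)$ and by scaling. Your case (i) bookkeeping is also right: $\widetilde a_{l_0}\notin K_2^{p^2}$ because a $K_2$-rational regular point is smooth, $r>1$, the leading coefficient is scaled to $1$ using $(r,p)=1$, and $K_2^{p^2}[z^{p^2}]$ is absorbed.

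The detour through $C_1$ does not work. The normalization of $C_2\times_{K_2}K_1$ does not identify $x$ with $z$; it introduces $x$ with $z=x^p$. For example, $w^9=z^2+b_0^3$ becomes $y^3=x^2+b_0$ with $z=x^3$, via $x=z/(w^3-b_0)$; compare the substitutions in Propositions 3.4 and 3.8. So $P(x)$ is not ``$\widetilde P(z)$ read over $K_1$''. Moreover, the relation between $P$ and $\widetilde P$ is only worked out in the paper after the shape of $\widetilde P$ is known, and it is not uniform: in Subcase (a) of Proposition 3.4 the exponent drops from $r$ to $r-pl_{i_1}$. Finally, Lemma 2.5 is itself justified only by ``the same method as Lemma 3.1'' (Remark 3.2), so relying on it here is circular in practice.

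The worry behind the detour is unfounded, because the exponent $p^2$ plays no role. Since $\partial_w(w^{p^2}-\widetilde P)=0$, the singular locus of $C_2\times_{K_2}\overline{K_2}$ in the chart is $\{\widetilde P'(z)=0\}$. Uniqueness of the singular point gives directly $\widetilde P'(z)=c(z-z_0)^N$ with $c\in K_2^*$ and $N\ge 1$. There is no interaction among the $\psi_j$ to control.
\begin{itemize}
\item In case (i), translate $z_0$ to $0$. Then $ia_i=0$ for every $i\neq r:=N+1$, and $p\nmid r$ holds automatically because the coefficient of $z^{i-1}$ in $\widetilde P'$ is $ia_i$.
\item In case (ii), write $N=p^{s_0}t_0$ with $p\nmid t_0$. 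Since $(z^{p^{s_0}}-z_0^{p^{s_0}})^{t_0}\in K_2[z]$, its coefficient $-t_0z_0^{p^{s_0}}$ lies in $K_2$. Hence $z_0^{p^{s_0}}\in K_2$, and $s_0\ge1$ because $z_0\notin K_2$.
\item Take $s\le s_0$ minimal with $z_0^{p^s}\in K_2$. Minimality forces $\widetilde\beta=z_0^{p^s}\notin K_2^p$.
\item Because $p\mid N$, you get $\frac{d}{dz}\,[z(z^{p^s}-\widetilde\beta)^{t}]=(z-z_0)^N$ with $t=N/p^s$. Therefore $\widetilde P-c\,z(z^{p^s}-\widetilde\beta)^t\in K_2[z^p]$.
\end{itemize}
Two smaller corrections. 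No translation is involved in case (ii). And the remainder is an element of $K_2[z^p]$, not ``a $p$-th power'' as you wrote; it is generally not in $K_2^p[z^p]$, which is why the statement only asserts $\widetilde\varphi\in K_2[z^p]$ with monomials outside $K_2^{p^2}[z^{p^2}]$.
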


\begin{proof}
Write $\widetilde P(z)=\sum_{i=0}^n a_i z^i$. Then
\[
\widetilde P'(z)=\sum_{i=1}^n ia_i z^{i-1}.
\]

\text{(i)} When $z_0\in K_2$, after a
translation we may suppose that $z_0=0$. The curve
$C_2\times_{K_2}\overline{K_2}$ has a unique singular point, hence
$\widetilde P'(z)$ has the unique root $z=0$ over $\overline{K_2}$. Therefore,
$ a_1=0$, and if $r>1$ is the smallest integer with $ra_r\neq0$, then
\[
\widetilde P'(z)
=z^{r-1}[na_nz^{n-r}+\cdots+(r+1)a_{r+1}z+ra_r].
\]
The second factor has no root over $\overline{K_2}$, and hence it is a nonzero
constant. Consequently,
\[
ia_i=0\quad (i\neq r).
\]
We may assume
\[
\widetilde P(z)=a_rz^r+\sum_{i=0}^{m}a_{l_i}z^{l_ip},
\]
where $(r,p)=1$ and
$0\leq l_0<\cdots<l_m$.

The point $Q_2=(0,\widetilde{P}(0)^{p^{-2}})$ is non-smooth, hence not $K_2$-rational; thus
$\widetilde P(0)\in K_2\setminus K_2^{p^2}$.
It follows that $l_0=0$ and
\[
\widetilde P(z)=a_rz^r+a_{l_0}+\sum_{i=1}^m a_{l_i}z^{l_ip}.
\]
Since $(r,p)=1$, a change of variables of the form
$z\mapsto a_r^{-u}z$, $w\mapsto a_r^{v}w$ with
$ur+vp^2=1$ normalizes the coefficient of $z^r$ to $1$. Moreover, the monomials in \(\sum_{i=1}^m a_{l_i}z^{l_ip}\) that lie in \(K_2^{p^2}[z^{p^2}]\) can be absorbed into \(w^{p^2}\). Thus, we may assume
\[
w^{p^2}=z^r+\widetilde a_{l_0}
+\sum_{i=1}^m\widetilde a_{l_i}z^{l_ip},
\]
with $\widetilde a_{l_0}:=a_{l_0}/a_r^{vp^2}\in K_2\setminus K_2^{p^2}$, and $\widetilde a_{l_i}z^{l_ip}\notin (K_2^*[z])^{p^2}$ for $i\geq 1$.

\text{(ii)} When $z_0 \notin K_2$,
let $r$ be the largest integer such that $ra_r\neq0$. Since $z_0$ is the
unique root of $\widetilde P'(z)$ in $\overline{K_2}$, we have
\[
\widetilde P'(z)=ra_r(z-z_0)^{r-1}.
\]
Write $r-1=p^{s_0} t_0$ with $(p,t_0)=1$.
Then 
$$
\widetilde{P}'(z)=ra_r(z^{p^{s_0}}-z_0^{p^{s_0}})^{t_0}=ra_r(z^{p^{s_0} t_0}-t_0 z_0^{p^{s_0}} z^{p^{s_0}(t_0-1)} +\cdots)\in K_2[z],
$$
which implies $z_0^{p^{s_0}}\in K_2$ and $s_0\geq 1$. Let $s$ be the smallest positive integer with $z_0^{p^s}\in K_2$, and set $t=(r-1)/p^s$. We have $\widetilde{P}'(z)=a_r(z-z_0)^{p^{s}t}$ and 
$$
\widetilde{P}(z)=a_r z(z-z_0)^{p^{s}t}+\varphi(z),
$$
where $\varphi(z) \in K_2[z^p]$.

Again, since $(r,p)=1$, the coefficient of $z(z-z_0)^{p^{s}t}$ can be normalized to $1$, we may assume
\[
w^{p^2}=\widetilde{P}(z)=z(z^{p^s}-\widetilde{\beta})^t+\varphi(z),
\]
with $\widetilde{\beta}=z_0^{p^s}\in K_2\setminus K_2^p$, $\varphi(z) \in K_2[z^p]$, and no monomial of $\varphi(z)$ belongs to $K_2^{p^2}[z^{p^2}]\setminus \{0\}$.
\end{proof}

\begin{rmk}
Lemma 2.5 can be proved using the same method as in the proof of Lemma 3.1.
\end{rmk}

By Lemma 2.5, $p_a(C_1)=\frac{1}{2}(p-1)(r-1)$ or $\frac{1}{2}(p-1)p^s t$. Thus, when $p_a(C_1)=1$ and $p\in \{2,3\}$, applying Lemma 2.5 again yields the following result.

\begin{prop}
Suppose that $p_a(C_1)=1$ and $p\in \{2,3\}$.
\begin{itemize}
\item[(i)] If $p=3$, then $Q_1$ is semi-rational and the local equation of $C_1$ at $Q_1$ is, up to isomorphism,
$$
y^3=x^{2}+a_{l_0}+\sum _{i=1}^{m} a_{l_i}x^{3l_i},
$$
where $0<l_1<l_2<\cdots<l_m$, $a_{l_0}\in K_1\setminus K_1^3$, and $a_{l_i} \in K_1\setminus K_1^{*3}$ for $1\leq i \leq m$.
\item[(ii)] If $p=2$ and $Q_1$ is semi-rational, then the local equation of $C_1$ at $Q_1$ is, up to isomorphism,
$$
y^2=x^{3}+a_{l_0}+\sum _{i=1}^{m} a_{l_i}x^{2l_i},
$$
with $0<l_1<l_2<\cdots<l_m$, $a_{l_0}\in K_1\setminus K_1^2$, and $a_{l_i} \in K_1\setminus K_1^{*2}$ for $1\leq i \leq m$.
\item[(iii)] If $p=2$ and $Q_1$ is not semi-rational, then the local equation of $C_1$ at $Q_1$ is
$$
y^2=x(x^2+\beta)+\varphi(x),
$$
where $\beta:=x_0^2\in K_1\setminus K_1^2$, $\varphi(x) \in K_1[x^2]$, and no monomial of $\varphi(x)$ belongs to $K_1^2[x^2]\setminus \{0\}$.
\end{itemize}
\end{prop}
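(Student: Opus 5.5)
We specialize Lemma 2.5 using the genus formulas recorded just before the statement: $p_a(C_1)=\tfrac12(p-1)(r-1)$ in case (i) of Lemma 2.5, and $p_a(C_1)=\tfrac12(p-1)p^s t$ in case (ii). The proof consists of solving $p_a(C_1)=1$ in positive integers for $p\in\{2,3\}$ and reading off the resulting normal forms.

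For $p=3$, the semi-rational case gives $r-1=1$, so $r=2$, and $(2,3)=1$ as Lemma 2.5(i) requires. The non-semi-rational case would need $3^s t=1$ with $s\ge 1$, which is impossible. So $Q_1$ must be semi-rational. Substituting $r=2$ and $p=3$ into Equation (1) gives exactly the form in (i): $b_{l_0}\in K_1\setminus K_1^3$ and $b_{l_i}\in K_1\setminus K_1^{*3}$. We rename the $b$'s as $a$'s.

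For $p=2$ and $Q_1$ semi-rational, the condition becomes $\tfrac12(r-1)=1$, so $r=3$, which is odd as required. Equation (1) then yields (ii). For $p=2$ and $Q_1$ not semi-rational, the condition becomes $\tfrac12\cdot 2^s t=1$, i.e. $2^{s-1}t=1$. Hence $s=1$ and $t=1$, and Equation (2) reads $y^2=x(x^2-x_0^2)+\varphi(x)$. Since $-1=1$ in characteristic $2$, this is $x(x^2+\beta)+\varphi(x)$ with $\beta=x_0^2$. Lemma 2.5(ii) with $s=1$ gives $x_0^{2}\in K_1\setminus K_1^2$, and it also gives the stated conditions on $\varphi$.

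The only point needing care is the genus formulas themselves, since the statement uses them as given. If we had to justify them, we would use the fact that the normalization of $\overline{C_1}$ is $\mathbb{P}^1$. We would then compute the $\delta$-invariant of the unique singular point of the plane model $y^p=P(x)$, where the leading degree is $r$ in case (i) and $1+p^st$ in case (ii). A model of this shape behaves like a curve of type $y^p=x^{\deg}$, which gives $\tfrac12(p-1)(\deg-1)$. This is routine and is assumed from the discussion following Lemma 2.6. Beyond that, the argument is pure arithmetic.
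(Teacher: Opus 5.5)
Your proof is correct and follows the paper's route: the paper also substitutes the genus formulas $p_a(C_1)=\tfrac12(p-1)(r-1)$ and $p_a(C_1)=\tfrac12(p-1)p^st$ from Lemma 2.5, solves $p_a(C_1)=1$ for $p\in\{2,3\}$, and reads off the normal forms. One correction to your optional aside on the genus formulas: the relevant invariant is not the global degree of $P$. It is the exponent $e$ in the local form $u^p=c(x-x_0)^e$ obtained over $\overline{K_1}$ after subtracting a $p$-th power, with $e=r$ in case (i) and $e=p^st+1$ in case (ii). The terms $b_{l_i}x^{l_ip}$ or $\varphi(x)$ can have degree larger than $e$; for example, $y^3=x^2+b_0+b_1x^3$ has degree $3$ but $r=2$.
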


\subsection{The Sylow $p$-group of $\mathrm{Aut}_{K_2}(C_2)$}
\subsubsection{The local equation of $C_2$ when $Q_2$ is semi-rational and $p_a(C_1)=1$}
\begin{prop}
Suppose that $p_a(C_1)=1$ and $p\in \{2,3\}$. If $Q_2$ is semi-rational, then the local equation of $C_2$ at $Q_2$ is, up to isomorphism,
\begin{equation}
w^{p^2}=z^{5-p}+\widetilde{b}_{0}+\sum _{i=1}^{n} \widetilde{b}_{l_i}z^{pl_i}+\widetilde\psi^p(z),
\end{equation}
where $\widetilde{b}_{0}\in K_2 \setminus K_2^p$, $\widetilde{b}_{l_i} \in K_2\setminus K_2^{*p}$, $0<l_1<\cdots<l_n$, $\widetilde\psi(z)\in K_2[z]$, and no monomial of $\widetilde\psi(z)$ belongs to $K_2^p[z^p]$.
\end{prop}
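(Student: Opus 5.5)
Starting from Lemma 3.1(i), since $Q_2$ is semi-rational we may write
$$w^{p^2}=z^{r}+\widetilde a_{l_0}+\sum_{i=1}^m \widetilde a_{l_i}z^{l_ip},$$
with $(r,p)=1$. The task is to pin down $r$ and to split the $z^{p}$-part into a genuinely non-$p$-th-power part plus a $p$-th power. The tool is the relation between $C_2$ and $C_1$ from Remark 2.4: $C_1$ is the normalization of $C_2\times_{K_2}K_1$ with $K_1=K_2^{1/p}$.

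\textbf{Computing the base change.} Over $K_1$, every coefficient $a\in K_2$ becomes a $p$-th power $a=\alpha^p$, $\alpha\in K_1$. Group the monomials of $\sum_{i\ge0}\widetilde a_{l_i}z^{l_ip}$ as follows. Those whose coefficient lies in $K_2\setminus K_2^{*p}$ are kept as $\widetilde b_{l_i}z^{pl_i}$ (including the constant term, if $\widetilde a_{l_0}\notin K_2^p$). Those whose coefficient lies in $K_2^{*p}$ are collected into $\widetilde\psi(z)^p$, with the terms of $\widetilde\psi$ lying in $K_2^p[z^p]$ removed, since those are $p^2$-th powers and were already absorbed in Lemma 3.1. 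Then
$$w^{p^2}=z^r+\widetilde\psi(z)^p+G(z^p),\qquad G(z^p)=\widetilde b_0+\sum\widetilde b_{l_i}z^{pl_i}.$$
Over $K_1$ we have $G(z^p)=g(z)^p$ with $g\in K_1[z]$. Setting $y:=w^p-\widetilde\psi(z)-g(z)$ gives $y^p=z^r$ on the base change, and the normalization introduces the new coordinate. Comparing with Proposition 3.3, $C_1$ has local equation $y^p=x^{5-p}+\cdots$, and $p_a(C_1)=\tfrac12(p-1)(r'-1)=1$ forces the exponent $r'=5-p$ (namely $2$ for $p=3$ and $3$ for $p=2$).

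\textbf{Relating $r$ to $r'$.} I would show that the normalization of $C_2\times K_1$ near $Q$ is given by the equation $y^p=z^r+(\text{terms in }K_1[z^p]\text{ coming from }\widetilde\psi,g)$, with $x=z$ still a coordinate, because $z$ is semi-rational. Applying the genus formula of Lemma 2.5(i) to this equation gives $p_a(C_1)=\tfrac12(p-1)(r-1)$, so $r=5-p$. The alternative is that the normalization does a further blow-up, introducing $x=y^a/z^b$ and changing the exponent. I would rule this out by the requirement that $C_1\times\overline{K_1}$ be cuspidal of type $y^p=x^{r'}$ with $r'$ minimal. A more robust approach is to compare arithmetic genera directly: $p_a(C_2)=p_a(C_1)+(\text{genus drop under }F)$, and for $w^{p^2}=z^r+\dots$ the generic-fiber genus depends only on $r$ and $p$. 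The step showing that the constant term must satisfy $\widetilde b_0\in K_2\setminus K_2^p$, rather than merely $\notin K_2^{p^2}$, is the main obstacle. If $\widetilde a_{l_0}\in K_2^{p}\setminus K_2^{p^2}$, then its $p$-th root enters $\widetilde\psi$ as a constant. In that case $Q_1$ would become $K_1$-rational after the base change, or the point $Q_1$ would be smooth on $C_1$. Either conclusion contradicts Proposition 3.3, which says that $Q_1$ is non-smooth with $a_{l_0}\in K_1\setminus K_1^p$.

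\textbf{Finishing the normalization.} Finally, I would normalize the leading coefficient as in Lemma 3.1 and check the side conditions: $0<l_1<\dots<l_n$, $\widetilde b_{l_i}\notin K_2^{*p}$, and no monomial of $\widetilde\psi$ in $K_2^p[z^p]$. All of these follow from the grouping above together with the absorption of $p^2$-th powers into $w^{p^2}$.
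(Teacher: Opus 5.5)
There is a genuine gap, located exactly at the step you identify as the main obstacle: excluding $\widetilde a_{l_0}\in K_2^{p}\setminus K_2^{p^2}$. The contradiction with Proposition 3.3 that you propose does not arise once the normalization is carried out completely.

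In this case, after base change and the substitution $z=x^p$, the constant $\widetilde a_{l_0}^{1/p}$ lies in $K_2=K_1^p$ and is absorbed. This leaves $w_1^p=x^r+\sum_j c_jx^{pl_{i_j}}$ with $c_j\in K_1\setminus K_1^p$. The point $(0,0)$ of this curve is $K_1$-rational, but the curve is not yet normal, so it is not $C_1$. If no $c_j$ occurs, or if $r<pl_{i_1}$, the normalization is indeed smooth, and your argument disposes of those degenerate subcases. In the main subcase $r>pl_{i_1}$, however, the further substitution $y=w_1/x^{l_{i_1}}$ gives
$$y^p=x^{r-pl_{i_1}}+c_1+\sum_{j\ge 2}c_jx^{p(l_{i_j}-l_{i_1})},\qquad c_1\in K_1\setminus K_1^p .$$
This is a legitimate local equation for $C_1$. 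The point is non-smooth and not rational, and the equation has exactly the shape allowed by Proposition 3.3. The condition $p_a(C_1)=1$ then only yields $r=pl_{i_1}+5-p$, not $r=5-p$.

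Minimality of the cusp exponent does not exclude this situation either. Nor does your genus comparison: the genus drop is not known a priori, and reading off $p_a(C_2)$ from $r$ and $p$ presupposes that the equation defines the regular curve $C_2$ near $Q_2$, which is precisely the point at issue.

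The missing ingredient is the regularity of $C_2$ itself, which your proposal never uses; in the paper this is Lemma 3.5. In the bad case the equation of $C_2$ has the form $w^{p^2}=z^{pl_1+5-p}+\sum_i\widetilde c_{l_i}z^{pl_i}+\widetilde\psi^p(z)$, with $l_1>0$ and $\widetilde c_{l_1}\notin K_2^p$. The element $\alpha=(w^p-\widetilde\psi(z))/z^{l_1}$ satisfies $\alpha^p=z^{5-p}+\widetilde c_{l_1}+\cdots$, so it is integral over the local ring at $Q_2$. Write $\alpha=u/v$, set $g_0:=v^{p^2}\in K_2[z]$, and expand $uv^{p^2-1}$ in the basis $1,w,\dots,w^{p^2-1}$. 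Comparing the $w^0$-components and evaluating at $z=0$ gives $\widetilde c_{l_1}\,g_0(0)^p\in K_2^p$. This forces $g_0(0)=0$, hence $v\in\mathfrak m$, contradicting normality at $Q_2$.

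Without this argument, or an equivalent proof that $C_2$ fails to be normal at $Q_2$, you obtain neither $\widetilde b_0\notin K_2^p$ nor $r=5-p$. A minor point: after normalization the coordinate on $C_1$ is $x$ with $x^p=z$, not $x=z$.
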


\begin{proof}
By Lemma 3.1, we may assume that $Q_2=(0,\widetilde{a}_{l_0}^{p^{-2}})$, and that the local equation of $C_2$ at $Q_2$ is
\[
w^{p^2}=z^{r}+\widetilde{a}_{l_0}+\sum_{i=1}^{m}\widetilde{a}_{l_i}z^{pl_i},
\]
where $\widetilde{a}_{l_0}\in K_2\setminus K_2^{p^2}$, $r\in \Bbb{Z}_{>1}$, $(r,p)=1$, $0<l_1<\cdots<l_m$, and $\widetilde{a}_{l_i}z^{pl_i} \in K_2[z^p] \setminus (K_2^{*}[z])^{p^2}$.
Set
\[
\phi(z)=\widetilde{a}_{l_m}^{1/p}z^{l_m}+\dots+\widetilde{a}_{l_1}^{1/p}z^{l_1}
        +\widetilde{a}_{l_0}^{1/p}=:\phi_1(z)+\phi_2(z),
\]
where every monomial of $\phi_1(z)$ lies in $K_1^p[z^p]$ and no monomial of
$\phi_2(z)$ belongs to $K_1^p[z^p]$.

\medskip
\noindent\textbf{Case $p=3$.}
Write $r=3r_1+\delta$ with $\delta\in\{1,2\}$ and $r_1\ge 0$.  Define
\[
x=\begin{cases}
\dfrac{w^3-\phi(z)}{z^{r_1}}, & \delta=1,\\[8pt]
\dfrac{z^{r_1+1}}{w^3-\phi(z)}, & \delta=2,
\end{cases}
\]
so that in either case $z=x^3$ and
\[
w^3 = x^r + \widetilde{a}_{l_0}^{1/3} + \sum_{i=1}^{m} \widetilde{a}_{l_i}^{1/3} x^{3l_i}.
\]

\emph{Subcase (a): $\widetilde{a}_{l_0}\in K_2^3$.}
Then $\widetilde{a}_{l_0}^{1/3}\in K_1^3$ and $\phi_1(z)\neq0$.
Put $w_1 = w - \phi_1(x^3)^{1/3}$, we obtain that
$$
\begin{cases}
w_1^3=x^r, & \text{if $\phi_2(z)=0$},\\[8pt]
w_1^3=x^r+\phi_2(x^3):=x^r +\sum_{j=1}^{n} \widetilde{c}_{l_{i_j}} x^{3l_{i_j}}, &  \text{if $\phi_2(z)\neq 0$},
\end{cases}
$$
where $\widetilde{c}_{l_{i_j}}:= \widetilde{a}_{l_{i_j}}^{1/3}\in K_1\setminus K_1^3$ and $0<l_1\leq  l_{i_1} < \dots < l_{i_n}\le l_m$.

If $\phi_2(z)=0$ or $r < 3l_{i_1}$, then we may set $y = w_1/x^{r_1}$. When $\delta=1$, this gives $y^3 = x + \cdots$, smooth at $x=0$;
when $\delta=2$, this gives $y^3 = x^2 + \cdots$, whose normalization remains smooth.
Both contradict the non-smoothness of $C_1$.
Therefore, $\phi_2(z)\neq 0$ and $r > 3l_{i_1}$.
Now set $y = w_1/x^{l_{i_1}}$ to obtain
\[
y^3 = x^{r-3l_{i_1}} + \widetilde{c}_{l_{i_1}} + \sum_{j=2}^{n} \widetilde{c}_{l_{i_j}} x^{3l_{i_j}-3l_{i_1}},
\]
which is the local equation of $C_1$ because $\widetilde{c}_{l_{i_1}} \in K_1\setminus K_1^3$. Since $p_a(C_1) = r-3l_{i_1}-1=1$, we have $r = 3l_{i_1}+2$.
Consequently $\delta=2$. We may assume the local equation of  $C_2$ is
$$
w^{9} = z^{3l_{i_1}+2}
        + \sum_{j=1}^{n} \widetilde{a}_{l_{i_j}}z^{3l_{i_j}} + \widetilde\psi^3(z),
$$
with $\widetilde{a}_{l_{i_j}}\in K_2\setminus K_2^3$ and $\widetilde\psi(z):=\phi_1(z)\in K_2[z]$. As $C_2$ is regular, we have $l_{i_1}=0$ by Lemma 3.5, which contradicts that $l_{i_1}>0$. Therefore, Subcase (a) is impossible.

\emph{Subcase (b): $\widetilde{a}_{l_0}\notin K_2^3$.}
Now $\widetilde{a}_{l_0}^{1/3}\in K_1\setminus K_1^3$.
Put $y=w-\phi_1(x^3)^{1/3}$, we get
\[
y^3 = x^r + \widetilde{a}_{l_0}^{1/3} +\phi_2(x^3),
\]
which is the local equation of $C_1$ because $\widetilde{a}_{l_0}^{1/3}\notin K_1^3$.
Hence $p_a(C_1) = r-1$.  From $p_a(C_1)=1$ we obtain $r=2$,
i.e.\ $\delta=2$, $r_1=0$, and the equation of $C_2$ simplifies to
\[
w^{9} = z^{2} + \widetilde{a}_{l_0} + \sum_{j=1}^{n} \widetilde{a}_{l_{i_j}}z^{3l_{i_j}} + \widetilde\psi^3(z),
\]
where $\widetilde{a}_{l_0} \in K_2 \setminus K_2^3$, $\phi_2^3(z):=\sum_{j=1}^{n} \widetilde{a}_{l_{i_j}}z^{3l_{i_j}}$, $\widetilde\psi(z):=\phi_1(z) \in K_2[z]$, and no monomial of $\widetilde\psi(z)$ belongs to $K_2^3[z^3]$.

\medskip
\noindent\textbf{Case $p=2$.}
Now $r$ is odd, write $r=2r_1+1$ with $r_1\ge0$.  Set
$$
x=\frac{w^2-\phi(z)}{z^{r_1}},
$$
then we have $z=x^2$ and
\[
w^2 = x^r + \widetilde{a}_{l_0}^{1/2} + \sum_{i=1}^{m} \widetilde{a}_{l_i}^{1/2} x^{2l_i}.
\]

\emph{Subcase (a): $\widetilde{a}_{l_0}\in K_2^2$.}
Set $w_1 = w - \phi_1(x^2)^{1/2}$ to obtain
$$
\begin{cases}
w_1^2=x^r, & \text{if $\phi_2(z)=0$},\\[8pt]
w_1^3=x^r+\phi_2(x^2):=x^r +\sum_{j=1}^{n} \widetilde{c}_{l_{i_j}} x^{3l_{i_j}}, &  \text{if $\phi_2(z)\neq 0$},
\end{cases}
$$
where $\widetilde{c}_{l_{i_j}}:= \widetilde{a}_{l_{i_j}}^{1/3}\in K_1\setminus K_1^3$ and $0 <l_1\leq l_{i_1} < \dots < l_{i_n}\le l_m$.
Similar to Case $p=3$, we have $\phi_2(z) \neq 0$ and $r > 2l_{i_1}$.
Put $y = w_1/x^{l_{i_1}}$ to get
$$
y^2 = x^{r-2l_{i_1}} + \widetilde{c}_{l_{i_1}} + \sum_{j=2}^{n} \widetilde{c}_{l_{i_j}} x^{2l_{i_j}-2l_{i_1}},
$$
which is the local equation of $C_1$.  Then $p_a(C_1) = \frac{1}{2} (r-2l_{i_1}-1 )= 1$, so $r = 2l_{i_1}+3$.
We may assume the local equation of $C_2$ is
\[
w^{4} = z^{2l_{i_1}+3} 
        + \sum_{j=1}^{n} \widetilde{a}_{l_{i_j}}z^{2l_{i_j}} +\widetilde\psi^2(z),
\]
with $\widetilde{a}_{l_{i_j}}\in K_2\setminus K_2^2$ and $\widetilde\psi(z)\in K_2[z]$. Similar to Case $p=3$, as $C_2$ is regular and $l_{i_1}>0$, Subcase (a) is impossible by Lemma 3.5.

\emph{Subcase (b): $\widetilde{a}_{l_0}\notin K_2^2$.}
Similarly, the local equation of $C_1$ is
$$
y^2 = x^r + \widetilde{a}_{l_0}^{1/2} + \sum_{j=1}^{n} \widetilde{c}_{l_{i_j}} x^{2l_{i_j}}.
$$
Hence $p_a(C_1)=\frac{1}{2}(r-1)=1$, so $r=3$. The local equation of $C_2$ is
\[
w^{4} = z^{3} + \widetilde{a}_{l_0}+ \sum_{j=1}^{n} \widetilde{a}_{l_{i_j}}z^{2l_{i_j}} + \widetilde\psi^2(z).
\]

Combining the two cases, for notational simplicity, we may assume the local equation of $C_2$ is
$$
w^{p^2}=z^{5-p}+\widetilde{b}_{0}+\sum _{i=1}^{n} \widetilde{b}_{l_i}z^{pl_i}+\widetilde\psi^p(z),
$$
where $\widetilde{b}_{0}\in K_2\setminus K_2^p$, $\widetilde{b}_{l_i} \in K_2\setminus K_2^{*p}$, $0<l_1<\cdots<l_n$, $\widetilde\psi(z) \in K_2[z]$, and no monomial of $\widetilde\psi(z)$ belongs to $K_2^p[z^p]$.
\end{proof}

\begin{lemma}
Set $\widetilde{c}_{l_1} \in K_2\setminus K_2^p$, $\widetilde{c}_{l_i} \in K_2\setminus K_2^{*p}$ for $2\leq i\leq n$, $\widetilde\psi(z)\in K_2[z]$ and $0\leq l_1<\cdots<l_n$. Suppose $p\in \{2,3\}$ and the local equation of $C_2$ at $Q_2$ is
$$
w^{p^2}=\widetilde{P}(z):=z^{pl_1+5-p}+\sum _{i=1}^{n} \widetilde{c}_{l_i}z^{pl_i}+\widetilde\psi^p(z).
$$
If $C_2$ is a regular curve over $K_2$, then $l_1=0$.
\end{lemma}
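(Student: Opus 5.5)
The plan is to prove the contrapositive: assuming $l_1>0$, exhibit an explicit closed point of the affine chart $\operatorname{Spec} K_2[z,w]/(F)$, with $F:=w^{p^2}-\widetilde P(z)$, at which the local ring is not regular. This contradicts the regularity of $C_2$. The natural candidate is the point lying over $z=0$, which should be $Q_2$. The tool is the standard Jacobian-free criterion for a hypersurface in the regular scheme $\mathbb{A}^2_{K_2}$: for a maximal ideal $\mathfrak m\subset K_2[z,w]$ containing $F$, the local ring $(K_2[z,w]/(F))_{\mathfrak m}$ is regular if and only if $F\notin \mathfrak m^2$. This criterion is valid over the imperfect field $K_2$, because $K_2[z,w]_{\mathfrak m}$ is a two-dimensional regular local ring.

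\textbf{Step 1: identify the point.} Assume $l_1>0$. Then $\widetilde P(0)=\widetilde\psi(0)^p$, since every other monomial of $\widetilde P$ is divisible by $z$. Set $c:=\widetilde\psi(0)\in K_2$, and take
\[
\mathfrak m=(z,\;w^p-c).
\]
This ideal is maximal in both cases. If $c\notin K_2^p$, then $w^p-c$ is irreducible over $K_2$ and $\mathfrak m$ is already in this form. If $c=d^p$ with $d\in K_2$, I replace $\mathfrak m$ by $(z,w-d)$; then $w^p-c=(w-d)^p$ lies in $\mathfrak m^2$, and the argument below goes through unchanged. In either case $F\in\mathfrak m$, because $F\equiv w^{p^2}-c^p=(w^p-c)^p \pmod z$.

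\textbf{Step 2: show $F\in\mathfrak m^2$.} Expand
\[
F=(w^p-c)^p-z^{pl_1+5-p}-\sum_{i=1}^n\widetilde c_{l_i}z^{pl_i}-\bigl(\widetilde\psi(z)-c\bigr)^p,
\]
using $\widetilde\psi(z)^p-c^p=(\widetilde\psi(z)-c)^p$ in characteristic $p$. Each term then lies in $\mathfrak m^2$:
\begin{itemize}
\item $(w^p-c)^p\in\mathfrak m^p\subset\mathfrak m^2$, since $p\ge2$.
\item $z^{pl_1+5-p}\in\mathfrak m^2$, because $pl_1+5-p\ge5$ when $l_1\ge1$, for both $p=2$ and $p=3$.
\item $z^{pl_i}\in\mathfrak m^2$, because $pl_i\ge p\ge2$ (recall $0<l_1<\cdots<l_n$).
\item $(\widetilde\psi(z)-c)^p\in(z)^p\subset\mathfrak m^2$, since $z$ divides $\widetilde\psi(z)-\widetilde\psi(0)$.
\end{itemize}
Hence $F\in\mathfrak m^2$, so the local ring at $\mathfrak m$ is not regular. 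This contradicts the assumption that $C_2$ is a regular curve, and therefore $l_1=0$.

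By contrast, when $l_1=0$ the term $\widetilde c_{l_1}\notin K_2^p$ enters the constant term, and the argument breaks down as it should: $\widetilde P(0)=\widetilde c_{0}+\widetilde\psi(0)^p$ is no longer a $p$-th power in a way that lets $F$ factor modulo $\mathfrak m^2$. The hypotheses on $\widetilde c_{l_i}$ are therefore only needed implicitly, to make sure the statement is not vacuous.

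\textbf{Main obstacle.} The delicate point is Step 1, namely the correct description of the maximal ideal of the closed point when $c$ is not a $p$-th power, and the justification of the criterion ``regular $\iff F\notin\mathfrak m^2$'' over a non-perfect field. I would handle the latter by citing the fact that a quotient of a regular local ring by one element $F$ is regular exactly when $F\notin\mathfrak m^2$. Once $\mathfrak m=(z,w^p-c)$ is in hand, Step 2 is a mechanical degree count, and the inequality $pl_1+5-p\ge2$ is exactly where $l_1>0$ is used.
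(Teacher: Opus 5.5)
Your proof is correct, and it takes a different and shorter route than the paper.

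\textbf{How the paper argues.} The paper also assumes $l_1>0$ and works at $\mathfrak m=(z,\,w^p-\widetilde\psi(0))$, but it uses only that a regular local ring is integrally closed. It builds $\alpha=(w^p-\widetilde\psi(z))/z^{l_1}$. This element satisfies $\alpha^p=z^{5-p}+\widetilde c_{l_1}+\sum_{i\ge 2}\widetilde c_{l_i}z^{p(l_i-l_1)}$, so it is integral over $A$. The paper then shows $\alpha\notin A_{\mathfrak m}$: it writes $\alpha=u/v$, expands $uv^{p^2-1}$ in the basis $1,w,\dots,w^{p^2-1}$, and evaluates at $z=0$. There the hypothesis $\widetilde c_{l_1}\notin K_2^p$ forces $z\mid v^{p^2}$, which contradicts $v\notin\mathfrak m$.

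\textbf{What your route buys.} You replace the normality argument with the tangent-space criterion for a hypersurface in $\mathbb A^2_{K_2}$, namely that the local ring is regular if and only if $F\notin\mathfrak m^2$. Together with $w^{p^2}-\widetilde\psi(z)^p=(w^p-\widetilde\psi(z))^p$, this gives $F\in\mathfrak m^2$ directly and avoids all coefficient bookkeeping. It also shows that the hypotheses on the $\widetilde c_{l_i}$ are superfluous for this implication. The paper's integral-element template is the one it reuses in Lemma 3.16, but your check disposes of that lemma just as easily. At the point over $z^2+\widetilde\beta=0$, every term of $F$ lies in $\mathfrak M^2$ once $t\ge 3$ in cases (i) and (ii), or once $l_1\ge 2$ in case (iii).

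\textbf{Two small corrections.}

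First, you hedge that the point over $z=0$ ``should be $Q_2$''. You can make this airtight: $\partial F/\partial w=0$ and $\partial F/\partial z=-(5-p)z^{pl_1+4-p}$ with $5-p\not\equiv 0\pmod p$. So the non-smooth locus of the chart lies over $z=0$, and $w^{p^2}-\widetilde P(0)$ is a power of a single irreducible, so exactly one point lies there.

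Second, your closing remark misplaces where $l_1>0$ is used. The bound $pl_1+5-p\ge 2$ holds even when $l_1=0$. What $l_1>0$ actually gives you is $\widetilde P(0)=\widetilde\psi(0)^p$. Equivalently, $\widetilde c_{l_1}z^{pl_1}$ lies in $\mathfrak m^2$ instead of being a constant that is not a $p$-th power.
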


\begin{proof}
Assume $l_1>0$. Set $A := K_2[z,w]/(w^{p^2}-\widetilde{P}(z))$ and $m:= (w^p-\widetilde\psi(0), z)$.  Since $C_2$ is regular at $Q_2=(0,\widetilde\psi(0)^{1/p})$, the local ring
$A_m$ is integrally closed.  Define
$$
\alpha = \frac{w^p-\widetilde\psi(z)}{z^{l_1}}.
$$
Using $w^{p^2}=\widetilde{P}(z)$ one computes
\[
\alpha^p = z^{5-p} + \widetilde{c}_{l_1} + \sum_{i=2}^n \widetilde{c}_{l_i}z^{pl_i-pl_1} \in A,
\]
so $\alpha$ is integral over $A$. Hence $\alpha \in A_m$.
Write $\alpha = u/v$ with $v \notin m$, $u \in A$.
Since $A$ is a free $K_2[z]$-module with basis $1,w,\dots,w^{p^2-1}$,
we may write
\[
v^{p^2} = g_0(z) \in K_2[z], \qquad
u v^{p^2-1} = \sum_{i=0}^{p^2-1} f_i(z) w^i \,\,\, \text{with} \,\,\,f_i(z) \in K_2[z].
\]
Then we have
$$
\left(\sum_{i=0}^{p^2-1} f_i w^i\right)^p = \sum_{j=0}^{p-1} F_j(z)\, w^{jp}
$$
with $F_j(z):=\sum_{i=0}^{p-1}  f _{ip+j}(z)^p \cdot \widetilde{P}(z)^i \in K_2[z]$. As
$$
\frac{\sum_{j=0}^{p-1} F_j(z)\, w^{jp}}{g_0(z)^p}=\left(\frac{uv^{p^2-1}}{v^{p^2}} \right)^p=\alpha^p=z^{5-p} + \widetilde{c}_{l_1} + \sum_{i=2}^n \widetilde{c}_{l_i}z^{pl_i-pl_1},
$$
we obtain
$$
\sum_{j=0}^{p-1} F_j(z)\, w^{jp}=g_0(z)^p \cdot [z^{5-p} + \widetilde{c}_{l_1} + \sum_{i=2}^n \widetilde{c}_{l_i}z^{pl_i-pl_1}].
$$

Thus, we must have
$F_j(z)=0$ for $j\ge 1$, and
$$
F_0(z)=g_0(z)^p \cdot [z^{5-p} + \widetilde{c}_{l_1} + \sum_{i=2}^n \widetilde{c}_{l_i}z^{pl_i-pl_1}].
$$
Combining this with
$$
F_0(0)=\sum_{i=0}^{p-1}f_{ip}(0)^p\cdot \widetilde{P}(0)^i=\sum_{i=0}^{p-1}f_{ip}(0)^p\cdot \widetilde{\psi}(0)^p \in K_2^p,
$$
we have 
$$
F_0(0) = \widetilde{c}_{l_1} \cdot g_0(0)^p \in K_2^p.
$$
If $g_0(0) \neq 0$, then $\widetilde{c}_{l_1} = F_0(0)/g_0(0)^p \in K_2^p$,
contradicting $\widetilde{c}_{l_1} \notin K_2^p$.  Hence $g_0(0)=0$, i.e.
$z \mid g_0(z)$, which forces $g_0(z) \in m$.  This contradicts $v \notin m$. 

Therefore $l_1=0$.
\end{proof}

Adopting the notation of Proposition 3.4, we have the following result.

\begin{lemma}
If $\widetilde{b}_{l_n}\in K_2\setminus K_2^p$ and $pl_n>5$, then the Sylow $p$-subgroup of
$\mathrm{Aut}_{K_2}(C_2)$ is trivial.
\end{lemma}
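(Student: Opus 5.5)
We mimic the argument sketched in Section 2 and in Example 2.8: describe a general element of the Sylow $p$-subgroup explicitly, substitute it into the local equation, and compare top coefficients modulo $p$-th powers.

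\textbf{Step 1: the shape of $\sigma$.} Let $\sigma$ be a nontrivial element of the Sylow $p$-subgroup $H_2$ of $\mathrm{Aut}_{K_2}(C_2)$. Since $Q_2$ is the unique non-smooth point, $\sigma$ fixes it. The normalization of $C_2\times_{K_2}K_0$ is $C_0\cong\mathbb{P}^1_{K_0}$, and $H_2$ is a $p$-group. Hence, as in \cite{s3}, Lemma 3.2, $\sigma$ acts on the coordinate $z$ (with $Q_2$ at $z=0$) by
\[
\sigma z=\frac{z}{1+\theta z},\qquad \theta\in K_2 .
\]
The element $\sigma w$ is then determined by $(\sigma w)^{p^2}=\widetilde P(\sigma z)$. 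Writing $N:=\max(l_n,\deg\widetilde\psi,\lceil (5-p)/p\rceil)$ and clearing denominators, we obtain
\[
(\sigma w)^{p^2}(1+\theta z)^{pN}=z^{5-p}(1+\theta z)^{pN-(5-p)}+\widetilde b_0(1+\theta z)^{pN}+\sum_{i=1}^n\widetilde b_{l_i}z^{pl_i}(1+\theta z)^{pN-pl_i}+\widetilde\psi^p\!\Big(\tfrac{z}{1+\theta z}\Big)(1+\theta z)^{pN}.
\]

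\textbf{Step 2: isolate the $p$-th power condition.} The function field of $C_2$ is $K_2(z)(w)$ with $w^{p^2}=\widetilde P(z)$. Write $\sigma w=\sum_{j} f_j(z)w^j$. Raising to the $p$-th power gives $\sum_j f_j^p w^{jp}$, and the $p^2$-th power gives $\sum_j f_j^{p^2}\widetilde P^{j}$ in the appropriate basis. Comparing components along $1,w^p,\dots,w^{p(p-1)}$, exactly as in the proof of Lemma 3.5, we find that after subtracting the term coming from $\widetilde P(z)$ itself, the difference
\[
R(z):=\widetilde P(\sigma z)-\lambda\widetilde P(z)
\]
must lie in $K_2^p(z)$ for a suitable unit $\lambda$ (essentially $(1+\theta z)^{-(5-p)}$ up to $p$-th powers, fixed by matching the $z^{5-p}$ term, which is the only term not in $K_2[z^p]$). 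We then multiply by $(1+\theta z)^{pN}$ to obtain a polynomial identity whose right-hand side must be a $p$-th power in $K_2[z]$. In particular, every coefficient of a monomial $z^{pk}$ must lie in $K_2^p$, and the non-$p$-divisible exponents must cancel.

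\textbf{Step 3: the top coefficient.} The hypothesis $pl_n>5$ gives $pl_n>5-p$, so the term $\widetilde b_{l_n}z^{pl_n}$ has the largest degree among the non-$\widetilde\psi^p$ terms. The contribution $\widetilde\psi^p(\sigma z)$ is already a $p$-th power, so it can be absorbed and discarded. We now look at the highest-degree coefficient of the bracketed expression, namely the coefficient of $z^{pN}$ where the $\widetilde b_{l_n}$-term first differs from its image. Exactly as in the discussion of $(*)$ in Section 2 (the case $M=l_m$), this coefficient has the form
\[
\widetilde b_{l_n}\big(1-(\text{unit})^p\big)+(\text{terms in }\theta),
\]
and computing it gives $\widetilde b_{l_n}\cdot(\text{nonzero }p\text{-th power in }\theta)$. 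This coefficient must lie in $K_2^p$. Since $\widetilde b_{l_n}\notin K_2^p$, we conclude $\theta=0$, so $\sigma z=z$. Then $(\sigma w)^{p^2}=w^{p^2}$, hence $\sigma w=w$ and $\sigma=\mathrm{id}$.

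\textbf{Main obstacle.} The delicate point is Step 2/3: correctly identifying the normalizing factor and showing that the leading coefficient really isolates $\widetilde b_{l_n}$ multiplied by a nonzero $p$-th power of an expression in $\theta$. Two things must be ruled out: interference from $\widetilde\psi^p$ (handled because it is a $p$-th power) and interference from the $z^{5-p}$ term (handled because $pl_n>5$ puts it strictly below the top degree). I would first carry out the computation for $p=2$ and $p=3$ separately, in the style of Example 2.8, and check that the coefficient of $z^{pl_n}$ after normalization is $\widetilde b_{l_n}\theta^{p}\cdot(\text{unit})^p$ or a similar expression.
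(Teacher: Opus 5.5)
\textbf{Verdict: genuine gap.} Your overall strategy is sound, and working directly on $C_2$ rather than on $C_1$ is legitimate. Under $z=x^p$ one has $P(x)^p=\widetilde P(z)-\widetilde\psi^p(z)$, so your computation is the Frobenius twist of the paper's computation on $y^p=P(x)$. But the step that carries the lemma is only guessed, and the guess is wrong in the details that matter.

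\textbf{The normalizing unit.} It is not ``$(1+\theta z)^{-(5-p)}$ up to $p$-th powers.'' Write $(\sigma w)^{p^2}=\sum_{b=0}^{p-1}F_b^p\widetilde P^b$ and compare components along $1,z,\dots,z^{p-1}$ over $K_2(z^p)$, using
\[
z^{5-p}(1+\theta z)^{-(5-p)}=(z^{5-p}+\theta z^{6-p})(1+\theta z)^{-(6-p)} .
\]
This gives $F_b=0$ for $b\ge 2$ and $F_1^p=(1+\theta z)^{-\lambda p}$ with $\lambda p=6-p$. The unit must itself be a $p$-th power; otherwise you could not discard $F_1^p\widetilde\psi^p$. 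After dropping the $\widetilde\psi$-terms and multiplying by $(1+\theta z)^{pl_n}$, the condition becomes
\[
\theta z^{\lambda p}(1+\theta z)^{p(l_n-\lambda)}+\widetilde b_0\bigl[(1+\theta z)^{pl_n}-(1+\theta z)^{p(l_n-\lambda)}\bigr]+\sum_{i=1}^{n}\widetilde b_{l_i}z^{pl_i}\bigl[(1+\theta z)^{p(l_n-l_i)}-(1+\theta z)^{p(l_n-\lambda)}\bigr]\in K_2^p[z^p].
\]

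\textbf{The decisive coefficient.} It is not the coefficient of $z^{pN}$ or of $z^{pl_n}$; those mix $\theta$, $\widetilde b_0$ and every $\widetilde b_{l_i}$. It is the coefficient of $z^{p(2l_n-\lambda)}$, namely $-\widetilde b_{l_n}\theta^{p(l_n-\lambda)}$. This term is isolated and nonzero for $\theta\neq 0$ only when $l_n>\lambda$, i.e.\ $pl_n>6-p$. For a multiple of $p$ this is exactly the hypothesis $pl_n>5$.

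\textbf{The hypothesis is misused.} You invoke it only through $pl_n>5-p$, and that is not enough. For $p=2,\ l_n=2$ or $p=3,\ l_n=1$ one has $pl_n>5-p$ but $l_n=\lambda$. Then the $\widetilde b_{l_n}$-contribution $\widetilde b_{l_n}z^{pl_n}\bigl[(1+\theta z)^{0}-(1+\theta z)^{0}\bigr]$ vanishes identically and gives no constraint on $\theta$. That is why these cases are excluded from the lemma and need separate arguments in the paper (Remark 3.7, Propositions 3.8 and 3.9). In Lemma 3.11 the $z^4$-term is indeed left invariant by the maps $\sigma$.

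\textbf{What is missing.} You need the correct unit $(1+\theta z)^{-\lambda p}$, the top term $-\widetilde b_{l_n}\theta^{p(l_n-\lambda)}z^{p(2l_n-\lambda)}$, and $l_n>\lambda$ as the reason it survives. With these in place, $\widetilde b_{l_n}\notin K_2^p$ forces $\theta=0$, and your conclusion $\sigma w=w$ follows as you wrote it.
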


\begin{proof}
For $p\in\{2,3\}$, set $\lambda = (6-p)/p$; then $\lambda\in\mathbb{N}$ and
$5-p = \lambda p-1$.
Let $\sigma$ be an element of the Sylow $p$-subgroup of $\mathrm{Aut}_{K_2}(C_2)$.

\medskip
\noindent\textbf{Step 1: descent to $C_1$.}
For Equation (4) in Proposition 3.4, we put
\[
x=
\begin{cases}
\dfrac{z}{w^3-\sum_{i=1}^{n}\widetilde{b}_{l_i}^{1/3}z^{l_i}-\widetilde{b}_0^{1/3}-\widetilde\psi(z)}, & p=3,\\[15pt]
\dfrac{w^2-\sum_{i=1}^{n}\widetilde{b}_{l_i}^{1/2}z^{l_i}-\widetilde{b}_0^{1/2}-\widetilde\psi(z)}{z}, & p=2.
\end{cases}
\]
Then $z=x^p$ and
\[
w^p = x^{\lambda p-1} + \widetilde{b}_0^{1/p} + \sum_{i=1}^{n}\widetilde{b}_{l_i}^{1/p}x^{pl_i} + \widetilde\psi(x^p).
\]
Since $\widetilde\psi(x^p)\in K_2[x^p]=K_1^p[x^p]$, we absorb it by setting
$y = w - \widetilde\psi^{1/p}(x^p)$, obtaining the local equation of $C_1$ over $K_1$:
\begin{equation}
y^p = P(x) := x^{\lambda p-1} + b_0 + \sum_{i=1}^{n} b_{l_i} x^{pl_i},
\end{equation}
with $b_0 := \widetilde{b}_0^{1/p}$ and $b_{l_i} := \widetilde{b}_{l_i}^{1/p}$.

\medskip
\noindent\textbf{Step 2: descent to $C_0$.}
Next define
\[
\tau=
\begin{cases}
\dfrac{x}{y - \sum_{i=1}^{n} b_{l_i}^{1/3}x^{l_i} - b_0^{1/3}}, & p=3,\\[14pt]
\dfrac{y - \sum_{i=1}^{n} b_{l_i}^{1/2}x^{l_i} - b_0^{1/2}}{x}, & p=2.
\end{cases}
\]
One verifies that $x=\tau^p$ and
\[
y = \tau^{\lambda p-1} + b_0^{1/p} + \sum_{i=1}^{n} b_{l_i}^{1/p} \tau^{pl_i},
\]
which is the local equation of $C_0\cong \mathbb{P}^1_{K_0}$. An automorphism of $\mathbb{P}^1_{K_0}$ fixing the
unique point $\tau=0$ has the form
$\sigma \tau = \tau/(1+\theta_0 \tau)$ with $\theta_0\in K_0$ (see \cite{s3}, Lemma 3.2).
Therefore,
\[
\sigma x = (\sigma \tau)^p = \frac{x}{1+\theta x}\,\,\, \text{with}\,\,\, \theta:=\theta_0^p\in K_1,
\]

\medskip
\noindent\textbf{Step 3: determining $\theta$.}
In $K_1(C_1)=K_1(x,y)$, we write
\[
\sigma y = \sum_{i=0}^{p-1} f_i(x) y^i\quad \text{with} \,\,\,\,f_i(x)\in K_1(x).
\]
Using $\sigma(y)^p = \sigma(y^p) = P(\sigma x)$ and (5), we have
\begin{equation}
\sum_{i=0}^{p-1} f_i(x)^p P(x)^i=
\frac{x^{\lambda p-1}}{(1+\theta x)^{\lambda p}}
   + \frac{\theta x^{\lambda p}}{(1+\theta x)^{\lambda p}}+b_0
   + \sum_{i=1}^{n} b_{l_i}\Bigl(\frac{x}{1+\theta x}\Bigr)^{pl_i}.
\end{equation}
On the left-hand side, $P(x)^i$ contains $x^{i(\lambda p-1)}$, and since
$f_i(x)^p\in K_1(x^p)$, the term $f_i(x)^p x^{i(\lambda p-1)}$ does not belong to
$K_1(x^p)$ unless $p\mid i$. Comparing terms outside $K_1(x^p)$ forces
$f_i(x)=0$ for $2\le i\le p-1$, while the coefficient of $x^{\lambda p-1}$ yields
$f_1(x)^p = 1/(1+\theta x)^{\lambda p}$, i.e.\ $f_1(x)=1/(1+\theta x)^{\lambda}$.
Substituting these into (6) and multiplying by
$(1+\theta x)^{l_n p}$, we obtain
$$
\begin{aligned}
f_0(x)^p (1+\theta x)^{l_np}=&(1+\theta x)^{l_n p -\lambda p} [\theta x^{\lambda p} -b_0-\sum_{i=1}^n b_{l_i}x^{l_i p}]+b_0(1+\theta x)^{l_n p}\\
&+\sum_{i=1}^{n} b_{l_i} {x}^{l_ip}(1+\theta x)^{l_n p-l_i p}.
\end{aligned}
$$

As $pl_n >5=5-p+p= (\lambda +1)p-1$, we obtain that $l_n\geq \lambda+1$, and that
the highest power of $x$ on the right-hand side of the above equation is $-b_{l_n}\theta^{\,l_n p-\lambda p}x^{(2l_n-\lambda)p}$.
Since $f_0(x)^p(1+\theta x)^{l_np}\in K_1^p(x^p)$, we have
$$
b_{l_n}\theta^{\,l_np-\lambda p}\in K_1^p.
$$
But $b_{l_n}\notin K_1^p$ by hypothesis, hence $\theta=0$.
Consequently $\sigma=\mathrm{id}$, and the Sylow $p$-subgroup of $\mathrm{Aut}_{K_2}(C_2)$ is trivial.
\end{proof}

\begin{rmk}
By Lemma 3.6, bounding the order of the Sylow $p$-subgroup of
$\mathrm{Aut}_{K_2}(C_2)$ reduces to examining only the two remaining
possibilities for the local equation of $C_2$:
\[
\begin{cases}
w^9 = z^2 + \widetilde{b}_0 + \widetilde{b}_1 z^3 + \widetilde\psi^3(z), & p=3,\\[8pt]
w^4 = z^3 + \widetilde{b}_0 + \widetilde{b}_1 z^2 + \widetilde{b}_2 z^4 + \widetilde\psi^2(z), & p=2,
\end{cases}
\]
where in both cases $\widetilde{b}_0\in K_2\setminus K_2^p$,
$\widetilde{b}_i\in K_2\setminus K_2^{*p}$ for $i\ge 1$,
$\widetilde\psi(z)\in K_2[z]$, and no monomial of $\widetilde\psi(z)$ belongs to $K_2^p[z^p]$.
\end{rmk}

\subsubsection{The Sylow $p$-group of $\mathrm{Aut}_{K_2}(C_2)$ when $Q_2$ is semi-rational}
\begin{prop}
If $p=3$ and the local equation of $C_2$ is
\[
w^9 = \widetilde{P}(z) = z^2 + \widetilde{b}_0 + \widetilde{b}_1 z^3 + \widetilde\psi^3(z)
\]
with $\widetilde{b}_0\in K_2\setminus K_2^3$,
$\widetilde{b}_1\in K_2\setminus K_2^{*3}$ and
$\widetilde\psi(z)\in K_2[z]$, then the Sylow $p$-subgroup of
$\mathrm{Aut}_{K_2}(C_2)$ is trivial.
\end{prop}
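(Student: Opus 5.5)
The plan is to follow the three-step scheme of Lemma 3.6: descend to $C_1$ and $C_0$, pin down the shape of $\sigma$, and then extract a $p$-th power condition that forces $\theta=0$. The difference is that now $l_n=1$ and $\lambda=1$ (since $p=3$ gives $\lambda=(6-3)/3=1$). So the top coefficient $b_{l_n}\theta^{(l_n-\lambda)p}$ no longer involves $\theta$, and I will look one level deeper, at $C_2$ itself, rather than only at $C_1$.

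\textbf{Step 1 (descent).} Exactly as in Steps 1–2 of Lemma 3.6, set $x$ with $z=x^3$. After absorbing $\widetilde\psi$, the curve $C_1$ becomes
\[
y^3=x^2+b_0+b_1x^3,\qquad b_i=\widetilde b_i^{1/3},
\]
and $C_0$ has parameter $\tau$ with $x=\tau^3$. Any $\sigma$ in the Sylow $3$-subgroup then satisfies $\sigma x=x/(1+\theta x)$ with $\theta\in K_1$, hence
\[
\sigma z=\frac{z}{1+\theta^3 z},\qquad \theta^3\in K_2 .
\]
Writing $\sigma y=f_0+f_1y+f_2y^2$ and repeating Step 3 gives $f_2=0$ and $f_1=1/(1+\theta x)$. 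Since $l_n=\lambda=1$, equation (6) multiplied by $(1+\theta x)^3$ reads
\[
f_0^3(1+\theta x)^3=\theta x^3+b_0\bigl[(1+\theta x)^3-1\bigr]=(\theta+b_0\theta^3)x^3 .
\]
This yields only the condition $\theta+b_0\theta^3\in K_1^3$. That condition is a genuine constraint, but it alone does not force $\theta=0$, which is exactly why the information on $C_1$ is insufficient here.

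\textbf{Step 2 (lift to $C_2$).} I will work directly with $w^9=\widetilde P(z)$. Write $\sigma w=\sum_{i=0}^{8}g_i(z)w^i$ with $g_i\in K_2(z)$. From $(\sigma w)^9=\widetilde P(\sigma z)$ we get
\[
\sum_i g_i^9\,\widetilde P^{\,i}=\frac{z^2}{(1+\theta^3z)^2}+\widetilde b_0+\frac{\widetilde b_1z^3}{(1+\theta^3z)^3}+\widetilde\psi^3\!\Bigl(\frac{z}{1+\theta^3z}\Bigr).
\]
Compatibility with $\sigma y$ (note $y$ is expressed through $w^3$ and $z$) forces $g_i=0$ unless $3\mid i$, with $g_3$ determined by $f_1$; the only genuinely free unknown is $g_0\in K_2(z)$. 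Clearing denominators by $(1+\theta^3z)^{N}$ for $N$ large, both sides become polynomials in which every term of the left side lies in $K_2^9[z^9]+(\text{known terms})$.

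\textbf{Step 3 (the obstacle).} The key step is comparing coefficients of monomials $z^j$ with $3\mid j$ but $9\nmid j$. These monomials arise only from the terms in $\widetilde b_0$, $\widetilde b_1z^3$, and $\widetilde\psi^3$, and their coefficients must lie in $K_2^9$ (after subtracting the $K_2^3$-parts determined by $g_3$). I expect one such coefficient to be $\widetilde b_1$ times a nonzero power of $\theta^3$ plus $\widetilde b_0$ times a power of $\theta^3$. Using that $\widetilde b_0\notin K_2^3$ and $\widetilde b_1\notin K_2^{*3}$, together with the relation $\theta+b_0\theta^3\in K_1^3$ from Step 1 rewritten in $K_2$, this should force $\theta=0$. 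Then $\sigma z=z$ and $\sigma w=w$, since $\sigma$ has $3$-power order and $w$ is determined by $w^9$ up to the identity. I expect the main difficulty to be the bookkeeping of which monomials of the $\widetilde\psi^3$-term can interfere. I would first handle it using the normalization that no monomial of $\widetilde\psi$ lies in $K_2^3[z^3]$, which should make its contributions separable modulo $K_2^9$. If instead $\widetilde b_1=0$, the relation $\theta+b_0\theta^3\in K_1^3$ combined with the $z^3$-coefficient should still give the contradiction via $\widetilde b_0\notin K_2^3$.
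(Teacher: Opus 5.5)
Your Step 1 is correct. It gives $\sigma x=x/(1+\theta x)$ and $\sigma y=(y+cx)/(1+\theta x)$ with $c^3=\theta+b_0\theta^3$. The resulting condition $\theta+b_0\theta^3\in K_1^3$ (equivalently $c\in K_1$) is exactly one half of the paper's argument, and you are right that it cannot finish the proof alone.

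The gap is in Steps 2--3. The claim that compatibility with $\sigma y$ forces $g_i=0$ unless $3\mid i$ is false. Indeed $y=w-\widetilde\psi(z)^{1/3}$ is expressed through $w$; it is $x$, not $y$, that is built from $w^3$ and $z$. Cubing $\sigma w=\sum_i g_iw^i$ gives $(\sigma w)^3=\sum_{j=0}^2F_jw^{3j}$ with $F_1=g_1^3+g_4^3\widetilde P+g_7^3\widetilde P^2$. From $C_1$ one gets $(\sigma w)^3=(\sigma y)^3+\widetilde\psi(\sigma z)=w^3/(1+\theta^3z)+(\text{an element of }K_1(z))$, so $F_1=1/(1+\theta^3z)\neq 0$. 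In fact your ansatz admits no solution for any $\theta$, including $\theta=0$, i.e.\ it would exclude the identity itself. Under it, $\sum_{3\mid i}g_i^9\widetilde P^{\,i}$ lies in $K_2(z^3)$, whereas $\widetilde P(\sigma z)$ contains $z^2/(1+\theta^3z)^2$ and hence has a nonzero $z^2$-component over $K_2(z^3)$. Step 3 then only states an expectation about some coefficient of $z^j$ with $3\mid j$, $9\nmid j$, without identifying it. The hypothesis on $\widetilde b_1$ plays no role in the argument that actually works.

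The information that finishes the proof is precisely the $w^7$-component you discarded. Write $\widetilde P=z^2+Q$ with $Q=\widetilde b_0+\widetilde b_1z^3+\widetilde\psi^3(z)\in K_2[z^3]$, so that $\widetilde P^2=z^3\cdot z-z^2Q+Q^2$ in characteristic $3$. Now decompose
$$F_1=\frac{1}{1+\theta^3z}=\frac{1-\theta^3z+\theta^6z^2}{1+\theta^9z^3}$$
along the basis $1,z,z^2$ of $K_2(z)$ over $K_2(z^3)$. Comparing $z$-components gives $g_7^3z^3=-\theta^3/(1+\theta^9z^3)$, that is,
$$\theta^3=-\bigl(z(1+\theta^3z)g_7\bigr)^3\in K_2\cap K_2^3(z^3)=K_2^3,$$
so $\theta\in K_2=K_1^3$. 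Combined with your condition $\theta+b_0\theta^3\in K_1^3$, this yields $b_0\theta^3\in K_1^3$. Since $b_0\notin K_1^3$, this forces $\theta=0$, hence $c=0$ and $\sigma=\mathrm{id}$. This is the paper's proof. It works with $(\sigma w)^3$ rather than $(\sigma w)^9$, so no bookkeeping of $\widetilde\psi$ or $\widetilde b_1$ is needed.
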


\begin{proof}
Let $\sigma$ belong to the Sylow $3$-subgroup of $\mathrm{Aut}_{K_2}(C_2)$.
We prove $\sigma=\mathrm{id}$.

\medskip
\noindent\textbf{Step 1: automorphism of $C_1$.}
Set $y = w - \widetilde\psi(z)^{1/3}$ and
\[
x = \frac{z}{w^3 - \widetilde{b}_0^{1/3} - \widetilde{b}_1^{1/3}z - \widetilde\psi(z)} .
\]
Then $z = x^3$ and the local equation of $C_1$ is
\[
y^3 = x^2 + b_0 + b_1 x^3,
\]
where $b_0: = \widetilde{b}_0^{1/3} \in K_1\setminus K_1^3$ and
$b_1: = \widetilde{b}_1^{1/3} \in K_1\setminus K_1^{*3}$.

Introduce $\tau = \dfrac{x}{y - b_0^{1/3} - b_1^{1/3}x}$.
Then $x = \tau^3$ and the local equation of $C_0$ is
$$
y = \tau^2 + b_0^{1/3} + b_1^{1/3}\tau^3.
$$
Every automorphism of $C_0\cong \mathbb{P}^1_{K_0}$ fixing the unique point $\tau=0$ is of the form
$\sigma \tau = \frac{\tau}{1+\theta_0 \tau}$ with $\theta_0\in K_0$.
Hence
\[
\sigma x = \frac{x}{1+\theta x}, \qquad \theta: = \theta_0^3 \in K_1 .
\]

Note that
$$
\sigma \tau=\frac{\tau}{1+\theta_0 \tau}=\frac{x/(y-b_0^{1/3}-b_1^{1/3}x)}{1+\theta^{1/3} x/(y-b_0^{1/3}-b_1^{1/3}x)}=\frac{x}{\theta^{1/3}x+y-b_0^{1/3}-b_1^{1/3}x}
$$
and
$$
\begin{aligned}
\sigma \tau=\sigma \left(\frac{x}{y-b_0^{1/3}-b_1^{1/3}x}\right)=\frac{\sigma x}{\sigma y- b_0^{1/3}-b_1^{1/3}\sigma x}&=\frac{x/(1+\theta x)}{\sigma y- b_0^{1/3}-b_1^{1/3}x/(1+\theta x)}\\
&=\frac{x}{(1+\theta x)(\sigma y-b_0^{1/3})-b_1^{1/3}x},
\end{aligned}
$$
we obtain
$$
\sigma y=\frac{y+(\theta \cdot b_0^{1/3}+\theta^{1/3})x}{1+\theta x}\,\,\,\text{with}\,\,\,\theta \cdot b_0^{1/3}+\theta^{1/3}\in K_1.
$$

\medskip
\noindent\textbf{Step 2: return to $C_2$.}
Since $z = x^3$ and $w = y + \widetilde\psi(z)^{1/3}$,
\[
\sigma z = \frac{z}{1+\theta^3 z}, \qquad
\sigma w = \frac{y + (\theta b_0^{1/3} + \theta^{1/3})x}{1+\theta x} + \sigma\widetilde\psi(z)^{1/3}.
\]
Therefore
\begin{equation}
\begin{aligned}
(\sigma w)^3 &=\frac{y^3+(\theta^3 b_0 + \theta)x^3}{1+\theta^3 x^3}+ \sigma\widetilde\psi(z)\\
&= \frac{w^3}{1+\theta^3 z}
               + \frac{(\theta^3 b_0 + \theta)z - \widetilde\psi(z)}{1+\theta^3 z}
               + \sigma\widetilde\psi(z) .
\end{aligned}
\end{equation}

On the other hand, $\sigma$ acts on $K_2(w,z)/(w^9=\widetilde{P}(z)) = K_2(z)\langle 1,w,\dots,w^8\rangle$,
so we can write $\sigma w = \sum_{i=0}^8 f_i(z) w^i$ with $f_i(z)\in K_2(z)$.
Cubing and using $w^9 = \widetilde{P}(z)$ gives
\begin{equation}
(\sigma w)^3 = \sum_{j=0}^2 F_j(z) w^{3j},
\end{equation}
where $F_j(z) := f_{j}^3 + f_{3+j}^3\widetilde{P}(z) + f_{6+j}^3\widetilde{P}(z)^2$. In particular,
\[
F_1(z) = f_1^3 + f_4^3\widetilde{P}(z) + f_7^3\widetilde{P}(z)^2 .
\]

Comparing the coefficient of $w^3$ in (7) and (8) yields
\begin{equation}
f_1^3+f_4^3 \widetilde{P}(z)+f_7^3 \widetilde{P}(z)^2=\frac{1}{\theta^3 z+1}=\frac{\theta^6 z^2 -\theta^3 z+1}{(\theta^3 z+1)^3}.
\end{equation}
Note that $\widetilde{P}(z) = z^2 + \widetilde{b}_0 + \widetilde{b}_1 z^3 + \widetilde\psi^3(z)$, so
$$
\begin{aligned}
f_1^3+f_4^3 \widetilde{P}(z)+f_7^3 \widetilde{P}(z)^2=&[f_4^3-f_7^3(\widetilde{b}_0 + \widetilde{b}_1 z^3 + \widetilde\psi^3(z))]\cdot z^2+ (zf_7)^3 \cdot z\\
&+[f_7^3(\widetilde{b}_0 + \widetilde{b}_1 z^3 + \widetilde\psi^3(z))^2+f_4^3(\widetilde{b}_0 + \widetilde{b}_1 z^3 + \widetilde\psi^3(z))+f_1^3],
\end{aligned}
$$
comparing the coefficient of $z$ in (9), we have
$$
(zf_7)^3=\frac{-\theta^3}{(\theta^3 z+1)^3},
$$
which implies that $\theta^3=-[z(\theta^3 z+1)f_7]^3 \in K_2^3(z^3)$, so $\theta^3 \in K_2^3$, i.e. $\theta^{1/3} \in K_1$. But $\theta \cdot b_0^{1/3}+\theta^{1/3} \in K_1$ and $b_0\in K_1\setminus K_1^3$, we have $\theta =0$ and $\sigma = \mathrm{id}$.
\end{proof}

\begin{prop}
Let $p=2$ and suppose the local equation of $C_2$ is
\[
w^4 = \widetilde{P}(z) = z^3 + \widetilde{b}_0 + \widetilde{b}_1 z^2 + \widetilde{b}_2 z^4 + \widetilde\psi^2(z),
\]
with $\widetilde{b}_0\in K_2\setminus K_2^2$,
$\widetilde{b}_i\in K_2\setminus K_2^{*2}$ for $i=1,2$, and
$\widetilde\psi(z)\in K_2[z]$ such that no monomial of $\widetilde\psi(z)$ belongs to
$K_2^2[z^2]$.
Then the Sylow $2$-subgroup $H_2$ of $\mathrm{Aut}_{K_2}(C_2)$ satisfies
$|H_2|< 4(p_a(C_2)-1)$.
\end{prop}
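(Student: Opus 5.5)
We follow the pattern of Proposition 3.8 and descend through the tower $C_2\to C_1\to C_0$. \textbf{Step 1.} Put $y=w-\widetilde\psi(z)^{1/2}$ and
\[
x=\frac{w^2-\widetilde b_0^{1/2}-\widetilde b_1^{1/2}z-\widetilde b_2^{1/2}z^2-\widetilde\psi(z)}{z}.
\]
Then $z=x^2$, and the local equation of $C_1$ becomes
\[
y^2=x^3+b_0+b_1x^2+b_2x^4,
\]
where $b_i=\widetilde b_i^{1/2}$. Next set $\tau=(y-b_0^{1/2}-b_1^{1/2}x-b_2^{1/2}x^2)/x$. This gives $x=\tau^2$ and $C_0\cong\mathbb{P}^1_{K_0}$. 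By \cite{s3}, Lemma 3.2, every $\sigma\in H_2$ satisfies $\sigma\tau=\tau/(1+\theta_0\tau)$ with $\theta_0\in K_0$. Hence $\sigma x=x/(1+\theta x)$ with $\theta=\theta_0^2\in K_1$, and $\sigma z=z/(1+\theta^2 z)$.

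\textbf{Step 2.} We compute $\sigma y$ from $\sigma\tau$, exactly as in Step 1 of Proposition 3.8. Writing $y=x\tau+b_0^{1/2}+b_1^{1/2}x+b_2^{1/2}x^2$ and substituting gives
\[
\sigma y=\frac{y+c_1(\theta)x+c_2(\theta)x^2}{(1+\theta x)^2}
\]
after clearing denominators. Here $c_1,c_2$ are explicit expressions in $\theta,\theta^{1/2},b_0^{1/2},b_1^{1/2},b_2^{1/2}$, and the condition $\sigma y\in K_1(x,y)$ forces them into $K_1$. Squaring, we return to $C_2$ through $w=y+\widetilde\psi(z)^{1/2}$. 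We obtain $(\sigma w)^2$ as a $K_2(z)$-combination of $1$ and $w^2$. We also expand $\sigma w=\sum_{i=0}^3 f_i(z)w^i$, so that $(\sigma w)^2=F_0+F_1w^2$ with $F_1=f_1^2+f_3^2\widetilde P(z)$. Comparing the coefficients of $w^2$, and then the odd-degree terms in $z$ (which lie outside $K_2^2(z^2)$), gives membership conditions. The odd part of $\widetilde P$ is $z^3$ plus the odd part of $\widetilde\psi^2$, and $\widetilde\psi^2\in K_2^2[z^2]$, so the odd part is just $z^3$. We expect the conditions to read $\theta\in K_2$ (not merely $K_1$) together with one Artin--Schreier/additive-type relation of the form $L(\theta)\in K_1^2$, where $L$ is additive and built from $b_0,b_1$.

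\textbf{Step 3.} We analyze the constraint set. The composition law $\theta\mapsto\theta+\theta'$ (from composing $\tau\mapsto\tau/(1+\theta_0\tau)$) embeds $H_2$ into the additive group of $K_1$ as the group of solutions $\theta$ of an additive polynomial condition. Those solutions satisfying the condition identically from both of the independent inseparable constants $b_0,b_1\notin K_1^2$ form an $\mathbb{F}_2$-vector space. A linear-independence argument over $K_1^2$, using that $1,b_0^{1/2},b_1^{1/2}$ are independent over suitable subfields, should cap its dimension. We then compute $p_a(C_2)$ from the equation $w^4=z^3+\cdots$; the relevant contributions are $(4-1)(3-1)/2=3$ plus the $\delta$-corrections coming from the $\widetilde b_2 z^4$ and $\widetilde\psi^2$ terms, i.e.\ the degree of $\widetilde P$. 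Finally we compare $|H_2|=2^{\dim}$ with $4(p_a(C_2)-1)$.

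\textbf{Main obstacle.} The hardest step is Step 2/3: extracting the precise additive condition on $\theta$ from the $w^2$-coefficient comparison, where $\widetilde\psi$ and $\widetilde b_2z^4$ contaminate the even part, and converting it into a bound that grows with $p_a(C_2)$. If $\widetilde\psi$ has high degree, the polynomial identity imposes more independent $K_2^2$-membership conditions. Our first attempt would be to isolate the top-degree coefficient of $\widetilde\psi$, as in Lemma 3.6: either it forces $\theta=0$, or $\deg\widetilde\psi$ is small and then $p_a(C_2)$ and $|H_2|$ are both explicitly bounded, so a direct count gives $|H_2|<4(p_a(C_2)-1)$.
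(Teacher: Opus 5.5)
Your Step 1 is the paper's descent, and your fallback of isolating the top coefficient of $\widetilde\psi$ is exactly how the paper disposes of $\deg\widetilde\psi\ge 3$. Writing $\sigma y=(y+\delta x^2)/(1+\theta^2x^2)$ with $\delta=b_0^{1/2}\theta^2+b_1^{1/2}\theta+\theta^{1/2}$, the requirement $\sigma w\in K_2(z)\langle 1,w,w^2,w^3\rangle$ gives $\delta^2z^2-\widetilde\psi(z)+(1+\theta^2z)^2\,\sigma\widetilde\psi(z)\in K_2^2(z^2)$, whose top term $a_n\theta^{4n-4}z^{3n-2}$ forces $\theta=0$.

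The genuine gap is the residual case $\deg\widetilde\psi\le 2$, where all the content of the proposition sits. Absorbing $\widetilde\psi^2$ gives $w^4=z^3+\widetilde a_2z^4+\widetilde a_1z^2+\widetilde a_0$, with only $\widetilde a_0\notin K_2^2$ guaranteed, so your ``two independent inseparable constants'' are not available. The same computation shows that the elements of $H_2$ are exactly $z\mapsto z/(1+\Theta z)$, $w\mapsto (w+\rho z)/(1+\Theta z)$, with $\Theta=\theta^2\in K_2$, $\rho\in K_2$ and $\rho^4=\widetilde a_0\Theta^4+\widetilde a_1\Theta^2+\Theta$. The surviving condition is $\rho\in K_2$, not $\theta\in K_2$. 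Conversely, as in Lemma 3.11, any two $K_2$-rational points $(z_i,w_i)$ produce such a pair via $\Theta=1/z_1-1/z_2$ and $\rho=w_1/z_1-w_2/z_2$.

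So bounding $|H_2|$ is equivalent to bounding the $K_2$-rational points of $C_2$. The solution set is only an $\mathbb{F}_2$-space cut out by a Frobenius-semilinear condition, and no linear-independence argument can bound its dimension. Besides, $1,b_0^{1/2},b_1^{1/2}$ cannot be independent over $K_1=K_0^2$, since $[K_0:K_1]=2$. The analogous group for $C_1$, where $p_a=1$, need not even be finite; finiteness here is a global phenomenon.

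What is missing is the global input the paper uses in Lemmas 3.10 and 3.11. Assuming $|H_2|\ge 4(p_a(C_2)-1)$, the paper applies the Hurwitz formula to the quotient $C_2\to C_2/H_2$ to force infinitely many $K_2$-rational points. The construction above then yields infinitely many automorphisms, contradicting the finiteness of $\mathrm{Aut}_{K_2}(C_2)$ for $p_a(C_2)\ge 2$.

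Your expectation that $p_a(C_2)$ grows with $\deg\widetilde P$ is also wrong. The normalization of $\overline{C_2}$ is $\mathbb{P}^1$, so $p_a(C_2)$ is the $\delta$-invariant of the unique singular point. That invariant equals $3$ here regardless of the $\widetilde b_2z^4$ and $\widetilde\psi^2$ terms, which is the value the paper uses. The target is therefore the absolute bound $|H_2|<8$, and your Step 3 offers no mechanism that reaches it.
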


\begin{proof}
Let $\sigma\in H_2$.  Set $b_i = \widetilde{b}_i^{1/2}$ ($i=0,1,2$); then
$b_1,b_2\in K_1 \setminus K_1^{*2}$, and $b_0\in K_1\setminus K_1^2$.

\medskip
\noindent\textbf{Step 1: descent to $C_1$ and $C_0$.}
Define
\[
x = \frac{w^2 - b_0 - b_1 z - b_2 z^2 - \widetilde\psi(z)}{z},\qquad
y = w - \widetilde\psi(z)^{1/2}.
\]
One checks that $z=x^2$ and the equation of $C_1$ over $K_1$ is
\[
y^2 = x^3 + b_0 + b_1 x^2 + b_2 x^4.
\]

Now set
\[
\tau = \frac{y - b_0^{1/2} - b_1^{1/2}x - b_2^{1/2}x^2}{x}.
\]
Then $x = \tau^2$ and the equation of $C_0$ over $K_0$ is
\[
y = \tau^3 + b_0^{1/2} + b_1^{1/2}\tau^2 + b_2^{1/2}\tau^4.
\]
Every automorphism of $C_0\cong\mathbb{P}^1_{K_0}$ fixing the unique point $\tau=0$ is of the form
$\sigma \tau = \tau/(1+\theta_0 \tau)$ with $\theta_0\in K_0$.
Hence
\[
\sigma x = (\sigma \tau)^2 = \frac{x}{1+\theta x},\qquad \theta:=\theta_0^2\in K_1,
\]
and
\[
\begin{aligned}
\sigma y& = (\sigma \tau)^3 + b_0^{1/2} + b_1^{1/2}(\sigma \tau)^2 + b_2^{1/2}(\sigma \tau)^4\\
      & =\frac{x}{1+\theta x}\cdot \frac{\tau}{1+\theta^{1/2} \tau}+ b_0^{1/2} +b_1^{1/2}\frac{x}{1+\theta x} + b_2^{1/2}\frac{x^2}{1+\theta^2 x^2}.
\end{aligned}
\]
Note that 
$$
\begin{aligned}
\frac{\tau}{1+\theta^{1/2} \tau}&=\frac{y - b_0^{1/2} - b_1^{1/2}x - b_2^{1/2}x^2}{x+\theta^{1/2}(y - b_0^{1/2} - b_1^{1/2}x - b_2^{1/2}x^2)}\\
&=\frac{x(y - b_0^{1/2} - b_1^{1/2}x - b_2^{1/2}x^2)+\theta^{1/2}(y^2 - b_0 - b_1 x^2 -b_2 x^4)}{x^2+\theta(y^2 - b_0 - b_1 x^2 -b_2 x^4)=x^2+\theta x^3}\\
&=\frac{y - b_0^{1/2} - b_1^{1/2}x - b_2^{1/2}x^2+\theta^{1/2}x^2}{x+\theta x^2},
\end{aligned}
$$
so
$$
\sigma y= \frac{y + (b_0^{1/2}\theta^2 + b_1^{1/2}\theta + \theta^{1/2})x^2}{1+\theta^2 x^2}
$$
with $\delta:=b_0^{1/2}\theta^2 + b_1^{1/2}\theta + \theta^{1/2}\in K_1$.

\medskip
\noindent\textbf{Step 2: return to $C_2$.}
As $z=x^2$ and $w = y + \widetilde\psi(z)^{1/2}$, we obtain
\[
\sigma z = \frac{z}{1+\theta^2 z},\qquad
\sigma w = \frac{w}{1+\theta^2 z} + \frac{\delta z - \widetilde\psi(z)^{1/2}}{1+\theta^2 z} + \sigma\widetilde\psi(z)^{1/2}.
\]
Since $\sigma w\in K_2(w,z)/(w^4-\widetilde{P}(z))=K_2(z)\langle 1,w,w^2,w^3 \rangle$,
\[
\frac{\delta z - \widetilde\psi(z)^{1/2}}{1+\theta^2 z} + \sigma \widetilde\psi(z)^{1/2} \in K_2(z).
\]
Thus,
\[
\delta^2 z^2 - \widetilde\psi(z)+(1+\theta^2 z)^2 \cdot  \sigma \widetilde\psi(z) \in K_2^2(z^2).
\]

Suppose that $\widetilde\psi(z):=\sum_{i=0}^n a_i z^i$ with $n \geq 3$. Then
$$
\begin{aligned}
&\,\,\,\,\,\,\, \delta^2 z^2-\sum_{i=0}^n a_i z^i +(1+\theta^2 z)^2 \cdot \sum_{i=0}^n a_i \left( \frac{z}{1+\theta^2 z} \right)^i \\
&=\frac{[\delta^2 z^2-\sum_{i=0}^n a_i z^i]\cdot (1+\theta^2 z)^{2n-2}+\sum_{i=0}^n a_i z^i(1+\theta^2 z)^{2n-i}}{(1+\theta^2 z)^{2n-2} }\in K_2^2(z^2),
\end{aligned}
$$
hence
\begin{equation}
[\delta^2 z^2-\sum_{i=0}^n a_i z^i]\cdot (1+\theta^2 z)^{2n-2}+\sum_{i=0}^n a_i z^i(1+\theta^2 z)^{2n-i} \in K_2^2[z^2].
\end{equation}

As $n \geq 3$, the highest-degree term of (10) is $-a_n\theta^{4n-4}z^{3n-2}$, which yields 
$$
a_n\theta^{4n-4}z^{3n-2} \in K_2^2[z^2].
$$
But $a_n z^n\notin K_2^2[z^2]$ by the definition of $\widetilde{\psi}(z)$, this forces $\theta=0$.

When $\widetilde\psi(z):=\sum_{i=0}^2 a_i z^i$, then the local equation of $C_2$ is
$$
w^4=z^3 + \widetilde{b}_0 + \widetilde{b}_1 z^2 + \widetilde{b}_2 z^4 + \sum_{i=0}^2 a_i^2 z^{2i}:=z^3+\widetilde{a}_2 z^4+\widetilde{a}_1 z^2+\widetilde{a}_0
$$
with $\widetilde{a}_0:=\widetilde{b}_0+a_0^2 \in K_2\setminus K_2^2$ and $\widetilde{a}_i:=\widetilde{b}_i+a_i^2 \in K_2$ for $1\leq i \leq 2$. Lemma 3.10 and Lemma 3.11 yield $|H_2| < 4(p_a(C_2)-1)$.
\end{proof}

\begin{lemma}
Let $p=2$ and suppose the local equation of $C_2$ is
\[
w^4 = \widetilde{P}(z) = z^3 +\widetilde{a}_2 z^4+\widetilde{a}_1 z^2+\widetilde{a}_0
\]
with $\widetilde{a}_0\in K_2\setminus K_2^2$ and $\widetilde{a}_i \in K_2$ for $1\leq i \leq 2$.
If the Sylow $2$-subgroup $H_2$ of $\mathrm{Aut}_{K_2}(C_2)$ satisfies
$|H_2|\geq 4(p_a(C_2)-1)$, then $C_2$ has infinitely many $K_2$-rational points.
\end{lemma}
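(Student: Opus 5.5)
The plan is to run the descent of Proposition 3.9, Step 1, in this simplified situation and write every $\sigma\in H_2$ explicitly in terms of a single parameter $\theta\in K_1$. Then I will show that the set of admissible $\theta$ is small unless an explicit family of $K_2$-points exists.

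\textbf{Explicit form of $\sigma$.} Put $a_i:=\widetilde a_i^{1/2}\in K_1$, so $a_0\notin K_1^2$. Set
\[
x=\frac{w^2-a_0-a_1z-a_2z^2}{z}, \qquad y=w .
\]
Then $z=x^2$, and $C_1$ has equation $y^2=x^3+a_0+a_1x^2+a_2x^4$. Next set
\[
\tau=\frac{y-a_0^{1/2}-a_1^{1/2}x-a_2^{1/2}x^2}{x},
\]
which gives $x=\tau^2$ and $C_0\cong\mathbb P^1_{K_0}$. By \cite{s3}, Lemma 3.2, $\sigma\tau=\tau/(1+\theta_0\tau)$ with $\theta=\theta_0^2\in K_1$. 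The computation of Proposition 3.9 then gives
\[
\sigma z=\frac{z}{1+\theta^2 z},\qquad \sigma w=\frac{w+\delta z}{1+\theta^2 z},\qquad \delta=a_0^{1/2}\theta^2+a_1^{1/2}\theta+\theta^{1/2}.
\]
The only constraint is that $\sigma$ is defined over $K_2$. Since $z,w$ generate the function field, this holds if and only if $\theta^2\in K_2$ and $\delta\in K_2$, i.e. $\theta\in K_1$ and $\delta\in K_1^2$.

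The map $\sigma\mapsto\theta$ is an injective homomorphism into $(K_1,+)$, because $\tau\mapsto\tau/(1+\theta_0\tau)$ composes additively in $\theta_0$. Its image is
\[
\Theta:=\{\theta\in K_1:\ a_0^{1/2}\theta^2+a_1^{1/2}\theta+\theta^{1/2}\in K_1^2\},
\]
which is an $\mathbb F_2$-subspace.

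\textbf{Producing rational points.} I will match parameters with $K_2$-points. A $K_2$-rational point $(z_1,w_1)$ with $z_1\neq0$ gives $x_1=(w_1^2-a_0-a_1z_1-a_2z_1^2)/z_1\in K_1$. It also gives
\[
\tau_1=\frac{w_1-a_0^{1/2}-a_1^{1/2}x_1-a_2^{1/2}x_1^2}{x_1},
\]
and $\tau_1^2=x_1$. Take two such points with $z_1\ne z_2$ and put $\theta_0:=1/\tau_1-1/\tau_2$, as in the $p\geq5$ argument of Section 2. A direct check should show that $\theta_0^2\in K_1$ and that the corresponding $\delta$ lies in $K_1^2$. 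This is because $\delta$ is linear in the combination built from $w_1,w_2\in K_2$, exactly as in the formula $d=y_1/x_1^{p^{u-1}}-y_2/x_2^{p^{u-1}}$ recalled in Section 2.

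Conversely, given $\theta\in\Theta$, the point $\sigma(Q)$ for a fixed $K_2$-point $Q$ is again a $K_2$-point. More usefully, I will read off rational points from $\Theta$ directly. Applying $\sigma$ to the point at $z=\infty$ (the only other branch, which is $K_2$-rational because $\widetilde a_2$ is a square after adjusting $w$ by $\widetilde a_2^{1/4}z$, and otherwise one uses the image of the point $z=0$ analysis) gives the point with $z=1/\theta^2$ and $w=\delta/\theta^2$, which is $K_2$-rational. Distinct $\theta$ give distinct $z$-coordinates. Hence $|\{K_2\text{-points}\}|\geq|H_2|-1$.

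\textbf{Counting and the main obstacle.} Now compute $p_a(C_2)$. Over $\overline{K_2}$ the curve $w^4=z^3+\cdots$ has a unique singular point, whose cusp invariants are those of the singularity $w^4=z^3$ after translation. This gives $p_a(C_2)=\tfrac12(4-1)(3-1)=3$, so $4(p_a(C_2)-1)=8$. The hypothesis $|H_2|\ge8$ then means $\dim_{\mathbb F_2}\Theta\geq3$.

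The key step, and the one I expect to be hardest, is to show that if $\Theta\neq0$ then $\Theta$ is in fact infinite. This would immediately give infinitely many $K_2$-points. I would argue as follows. If $\theta_1,\theta_2\in\Theta$, consider $\theta_1\theta_2/(\theta_1+\theta_2)$, which corresponds to conjugating by the involution $\tau\mapsto1/\tau$ combined with the points attached to $\theta_1,\theta_2$. Using $\delta$-membership in $K_1^2$, I would check that this produces new elements, possibly of the form $c\theta$ with $c\in K_2^{\times 2}$-type scalars. If this closure fails, I would fall back on the weaker statement needed: with three independent $\theta$'s I get at least $7$ points, and the construction of the previous paragraph applied to each pair of points yields new parameters $1/\tau_i-1/\tau_j$, iterating to an infinite set. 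The delicate part is guaranteeing that the new $z$-values are genuinely new, i.e. that the iteration does not close up in a finite set; here the nonsquare condition $a_0\notin K_1^2$ should prevent any finite orbit.
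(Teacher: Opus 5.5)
Your parametrization of $H_2$ is correct; it is Step 1 of Proposition 3.9 with $\widetilde\psi=0$. However, both steps meant to turn it into rational points fail.

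Put $u=1/z$, $v=w/z$. Then $C_2\setminus\{Q_2\}$ is the smooth affine curve
\[
v^4=\widetilde a_0u^4+\widetilde a_1u^2+u+\widetilde a_2,
\]
and $\sigma$ acts by the translation $(u,v)\mapsto(u+\theta^2,\,v+\delta)$, with $\delta\in K_2$ and $\delta^4=\widetilde a_0\theta^8+\widetilde a_1\theta^4+\theta^2$.

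\emph{No starting point.} The point over $z=\infty$ is $(u,v)=(0,\widetilde a_2^{1/4})$, with residue field $K_2(\widetilde a_2^{1/4})$. It is $K_2$-rational only when $\widetilde a_2\in K_2^4$; otherwise replacing $w$ by $w+\widetilde a_2^{1/4}z$ is not a $K_2$-operation. Its image under $\sigma$ has $w=(\delta+\widetilde a_2^{1/4})/\theta^2$, not $\delta/\theta^2$. The point over $z=0$ is $Q_2$, which is non-rational and fixed by $H_2$. So you have no starting $K_2$-point, and the count $|C_2(K_2)|\ge|H_2|-1$ is unfounded.

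\emph{Points form one orbit.} More seriously, the same formulas show that the difference of any two $K_2$-points is one of these translations; this is exactly the identity behind Lemma 3.11. Hence $C_2(K_2)$ is either empty or a single orbit of the translation group $\{\sigma_\theta:\theta\in\Theta\}\cong H_2$, and $|C_2(K_2)|\le|H_2|$. Your iteration therefore closes up immediately. Parameters built from pairs of points always fall back into $\Theta$. The expression $\theta_1\theta_2/(\theta_1+\theta_2)$ has no reason to lie in $\Theta$, which is only an additive group. The condition $\widetilde a_0\notin K_2^2$ plays no role.

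\emph{The key step.} You have $\Theta\cong H_2\subseteq\mathrm{Aut}_{K_2}(C_2)$, and the paper takes this group to be finite; Lemma 3.11 uses exactly that. So your key step ``$\Theta\neq0\Rightarrow\Theta$ infinite'' would amount to $H_2=1$. Yet nonzero $\Theta$ does occur, e.g.\ $\theta=1\in\Theta$ as soon as $\widetilde a_0+\widetilde a_1+1\in K_2^4$.

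The upshot is that no argument producing $K_2$-points from elements of $H_2$ can yield infinitely many of them. Given finiteness of $\mathrm{Aut}_{K_2}(C_2)$, the conclusion of the statement never holds. The statement can therefore only be true vacuously, and its real content is the upper bound $|H_2|<8$, for which your proposal has no ingredient.

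The paper does not use an explicit description of $H_2$ at all. It argues by contradiction:
\begin{enumerate}
\item it applies the Hurwitz formula to $C_2\to C_2/H_2$ to get $p_a(C_2/H_2)=0$;
\item it uses a Vandermonde argument (with $\widetilde a_0\notin K_2^2$) to exclude infinitely many $c\in K_2^2$ with $\widetilde P(c)\in K_2^2$;
\item it asserts that the points $Q_c$ with purely inseparable residue field of degree $4$ are ramification points of $C_2\to C_2/H_2$.
\end{enumerate}
Be aware that your translation picture shows every nontrivial element of $H_2$ moves each point with $u\in K_2$. So these $Q_c$ have trivial stabilizer and are unramified, and that last step of the paper does not supply the missing bound either.
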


\begin{proof}
Let $\pi \colon C_2 \to C_2/H_2$ be the quotient morphism; both curves are regular projective. By the Hurwitz formula (see \cite{s8}, Theorem 7.4.16),
\[
2p_{a}(C_2)-2=\deg(\pi) \cdot (2 p_{a}(C_2/H_2)-2)+\sum_{z\in C_2} d_z \cdot [k(z):K_2],
\]
where $d_z$ denotes the different and $k(z)$ the residue field at $z$. Since $\deg(\pi)=|H_2|\geq 4(p_a(C_2)-1)$ and $p_a(C_2)=\frac{(4-1)(3-1)}{2}=3$, we obtain $p_a(C_2/H_2)=0$. 

If $C_2$ has only finitely many $K_2$-rational points, then infinitely many $c\in K_2^2$ satisfy $\widetilde{P}(c)\in K_2^2\setminus K_2^4$ or $K_2\setminus K_2^2$. In either case, we will derive a contradiction as follows. Hence $C_2$ must possess infinitely many $K_2$-rational points.

\medskip
\noindent\textbf{Case 1:} $\widetilde{P}(c)\in K_2^2\setminus K_2^4$ for infinitely many $c\in K_2^2$.
Choose five distinct elements $c_1,\dots,c_5\in K_2^2$ with
$d_i:=\widetilde{P}(c_i)\in K_2^2$. Thus,
\[
\widetilde{a}_2 c_i^4 + c_i^3 + \widetilde{a}_1 c_i^2 + 0\cdot c_i + \widetilde{a}_0 = d_i.
\]
Then $R\, (\widetilde{a}_2,1,\widetilde{a}_1,0,\widetilde{a}_0)^T = (d_1,\dots,d_5)^T$, where
\[
R = \begin{pmatrix}
c_1^4 & c_1^3 & c_1^2 & c_1 & 1 \\
c_2^4 & c_2^3 & c_2^2 & c_2 & 1 \\
\vdots & \vdots & \vdots & \vdots & \vdots \\
c_5^4 & c_5^3 & c_5^2 & c_5 & 1
\end{pmatrix}
\]
is a Vandermonde matrix. Therefore, we have
\[
0\neq  \det(R) = \prod_{1\le i<j\le 5}(c_i-c_j)^2 \in K_2^2
\]
and $(\widetilde{a}_2,1,\widetilde{a}_1,0,\widetilde{a}_0)^T=R^{-1}\, (d_1,\dots,d_5)^T$. Since all entries of $R$ lie in $K_2^2$ and $d_i \in K_2^2$, we have $\widetilde{a}_0\in K_2^2$, which contradicts $\widetilde{a}_0\notin K_2^2$.

\medskip
\noindent\textbf{Case 2:} $\widetilde{P}(c)\in K_2\setminus K_2^2$ for infinitely many $c\in K_2^2$.
For such a $c$, the maximal ideal at $Q_c=(c,\widetilde{P}(c)^{1/4})$ equals
$(z-c, w^4-\widetilde{P}(c)) = (z-c)$.  Hence the residue field is
$k(Q_c) \cong K_2[w]/(w^4-\widetilde{P}(c))$, a purely inseparable extension
of $K_2$ of degree $4$.  Consequently, every such $Q_c$ is a ramification point
of $\pi$.

Since there are infinitely many such $c$, $\pi$ would have infinitely
many ramification points, which is a contradiction!
\end{proof}

\begin{lemma}
Let $p=2$ and suppose the local equation of $C_2$ is
\[
w^4 = \widetilde{P}(z) = z^3 +\widetilde{a}_2 z^4+\widetilde{a}_1 z^2+\widetilde{a}_0
\]
with $\widetilde{a}_0\in K_2\setminus K_2^2$ and $\widetilde{a}_1,\widetilde{a}_2 \in K_2$.
Then the Sylow $2$-subgroup $H_2$ of $\mathrm{Aut}_{K_2}(C_2)$ satisfies
$|H_2|< 4(p_a(C_2)-1)$.
\end{lemma}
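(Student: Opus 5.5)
We argue by contradiction. Suppose $|H_2|\geq 4(p_a(C_2)-1)=8$; recall from the proof of Lemma 3.10 that $p_a(C_2)=3$. By Lemma 3.10, $C_2$ then has infinitely many $K_2$-rational points. As in the proof of Proposition 3.9, we descend to $C_1$ and $C_0$ to obtain an explicit description of each $\sigma\in H_2$, and then check which parameters $\theta$ are compatible with the equation of $C_2$. The aim is to show that the admissible $\theta$ form a group of order at most $4$, or that infinitely many rational points force infinitely many automorphisms, contradicting finiteness of $\mathrm{Aut}_{K_2}(C_2)$.

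\textbf{Step 1 (descent).} Write $\widetilde a_i=a_i^2$ with $a_i\in K_1$. Put $x=(w^2-a_0-a_1z-a_2z^2)/z$, so that $z=x^2$ and $C_1$ is given by $w^2=x^3+a_0+a_1x^2+a_2x^4$. Next put $\tau=(w-a_0^{1/2}-a_1^{1/2}x-a_2^{1/2}x^2)/x$, so that $x=\tau^2$ and $C_0\cong\mathbb{P}^1_{K_0}$. By \cite{s3}, Lemma 3.2, $\sigma\tau=\tau/(1+\theta_0\tau)$. Exactly as in Step 1 of Proposition 3.9, this gives
\[
\sigma z=\frac{z}{1+\theta^2 z},\qquad \sigma w=\frac{w+\delta x^2}{1+\theta^2x^2}=\frac{w+\delta z}{1+\theta^2 z},\qquad \delta=a_0^{1/2}\theta^2+a_1^{1/2}\theta+\theta^{1/2},
\]
where $\theta=\theta_0^2\in K_1$.

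\textbf{Step 2 (rationality condition).} The automorphism $\sigma$ is defined over $K_2$, so $\sigma z\in K_2(z)$ forces $\theta^2\in K_2$, which is automatic. The real condition is $\delta\in K_2=K_1^2$. Writing $\theta=\eta^2$ with $\eta\in K_0$, this reads
\[
a_0^{1/2}\eta^4+a_1^{1/2}\eta^2+\eta\in K_1^2 .
\]
The map $\theta\mapsto\delta$ is additive, being a $p$-polynomial in $\eta$. Composition of the $\sigma$'s corresponds to addition of the $\theta$'s, so $H_2$ is identified with the additive subgroup $\Theta=\{\theta\in K_1:\delta(\theta)\in K_1^2\}$.

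\textbf{Step 3 (bounding $\Theta$).} We expect this step to be the main obstacle. Finiteness of $\Theta$ holds because $p_a(C_2)=3\ge 2$, so $S_2$ is of general type. Using the infinitely many $K_2$-rational points $(c,d)$ from Lemma 3.10, we would imitate the construction in Section 2: for two rational points we would build $\theta$ with $\sigma$ sending one point to the other. Sending $(z,w)=(c_1,d_1)$ to $(c_2,d_2)$ requires $1/c_2=1/c_1+\theta^2$, so $\theta=(1/c_1+1/c_2)^{1/2}$, together with a compatible $\delta$. One must check that $\delta\in K_1^2$ follows from $d_i^4=\widetilde P(c_i)$; this is the delicate part, and we would try to verify it by expanding $d_2(1+\theta^2c_2)=d_1+\delta c_2$ suitably. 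If this succeeds, we obtain infinitely many elements of $H_2$, contradicting finiteness, and therefore $|H_2|<8$. If the transitivity check fails, the fallback is a direct argument. We would show that $\delta(\theta)\in K_1^2$ defines an $\mathbb{F}_2$-space of dimension at most $2$, by considering the linear independence of $\eta,\eta^2,\eta^4$ over $K_1^2$ together with $a_0^{1/2}\notin K_1$, in the spirit of the coefficient comparisons in Proposition 3.8. This gives $|H_2|\le 4<8$.
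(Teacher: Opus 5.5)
Your plan matches the paper's: assume $|H_2|\ge 8$, use Lemma 3.10 to get infinitely many $K_2$-rational points, turn pairs of rational points into automorphisms, and contradict finiteness of $\mathrm{Aut}_{K_2}(C_2)$. But the one substantive step is left as a conditional (``the delicate part \dots\ if this succeeds''). That step is showing that a pair of rational points really yields an automorphism, i.e.\ that the required $\delta$ lies in $K_2$. As written, the proposal therefore proves nothing beyond Lemma 3.10.

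The step is a short computation, and it is exactly the paper's argument. Take rational points $(c_1,d_1)\neq(c_2,d_2)$ with $c_i\neq 0$, and put $\theta^2:=1/c_1+1/c_2\in K_2$ and $\rho:=d_1/c_1+d_2/c_2\in K_2$. Dividing $d_i^4=\widetilde P(c_i)$ by $c_i^4$ gives $(d_i/c_i)^4=\widetilde a_0c_i^{-4}+\widetilde a_1c_i^{-2}+c_i^{-1}+\widetilde a_2$. Adding the two identities in characteristic $2$ gives
\[
\rho^4=\widetilde a_0\theta^8+\widetilde a_1\theta^4+\theta^2=\delta(\theta)^4 ,
\]
so $\delta(\theta)=\rho\in K_2$. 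Equivalently, $z\mapsto z/(1+\theta^2z)$, $w\mapsto (w+\rho z)/(1+\theta^2 z)$ directly preserves $w^4=\widetilde P(z)$.

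The relation you wrote, $d_2(1+\theta^2c_2)=d_1+\delta c_2$, has the indices mixed up. The correct one is $d_2(1+\theta^2c_1)=d_1+\delta c_1$, which gives exactly $\delta=\rho$.

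Since $w$ is determined by $z$ on $C_2$, distinct rational points have distinct $z$-coordinates. Fixing $(c_2,d_2)$ and varying $(c_1,d_1)$ therefore gives infinitely many distinct $\theta$, hence infinitely many automorphisms. Your descent in Steps 1--2 is not needed for this direction; it only proves the converse, that every element of $H_2$ has this form.

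Your fallback cannot replace this step. The condition $\delta(\theta)\in K_1^2$ is only $\mathbb{F}_2$-linear, and its solution set is arithmetic in nature. In the coordinates $u=1/z$, $v=w/z$ the curve reads $v^4=\widetilde a_0u^4+\widetilde a_1u^2+u+\widetilde a_2$. Here $\Theta$ acts by the translations $(u,v)\mapsto(u+\theta^2,\,v+\delta(\theta))$. This action is free and transitive on the $K_2$-rational points of this affine model whenever there is one. These are all of $C_2(K_2)$, since the only missing point is $Q_2$, which is not rational.

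Thus $|\Theta|=\#C_2(K_2)$ in that case, and the claim $\dim_{\mathbb{F}_2}\Theta\le 2$ is a statement about the number of rational points of a curve. No comparison of coefficients over $K_1^2$ will deliver that. The finiteness has to come from the finiteness of $\mathrm{Aut}_{K_2}(C_2)$, as in the paper.
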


\begin{proof}
Assume $|H_2|\ge 4(p_a(C_2)-1)$.  By Lemma~3.10, $C_2$ possesses infinitely many
$K_2$-rational points.  Choose two distinct such points $(z_1,w_1)$ and
$(z_2,w_2)$.  Then $w_i^4 = z_i^3+\widetilde{a}_2 z_i^4+\widetilde{a}_1 z_i^2+\widetilde{a}_0$, and hence
\[
\Bigl(\frac{w_1}{z_1}\Bigr)^4 - \Bigl(\frac{w_2}{z_2}\Bigr)^4
   = \widetilde{a}_0\Bigl(\frac1{z_1^4}-\frac1{z_2^4}\Bigr)
     + \widetilde{a}_1\Bigl(\frac1{z_1^2}-\frac1{z_2^2}\Bigr)
     + \Bigl(\frac1{z_1}-\frac1{z_2}\Bigr).
\]
Thus,
\[
\rho^4 = \widetilde{a}_0\theta^4 + \widetilde{a}_1\theta^2 + \theta,
\]
where $\theta = 1/z_1-1/z_2$ and $\rho = w_1/z_1-w_2/z_2$.
Now define
\[
\sigma(z) = \frac{z}{1+\theta z},\qquad
\sigma(w) = \frac{w+\rho z}{1+\theta z}.
\]
Using the above relation one verifies directly that
$$
\begin{aligned}
\sigma(w)^4 = \frac{w^4+\rho^4 z^4}{(1+\theta z)^4}
           &= \frac{z^3+\widetilde{a}_2 z^4+\widetilde{a}_1 z^2+\widetilde{a}_0
                  +(\widetilde{a}_0\theta^4+\widetilde{a}_1\theta^2+\theta)z^4}
                  {(1+\theta z)^4}\\
           &=(\frac{z}{1+\theta z})^3+\widetilde{a}_2 (\frac{z}{1+\theta z})^4+\widetilde{a}_1 (\frac{z}{1+\theta z})^2+\widetilde{a}_0= \sigma(\widetilde{P}(z)),
\end{aligned}
$$
so $\sigma $ is an automorphism of $C_2$ of order 2. Since $C_2$ has infinitely many
$K_2$-rational points, choosing different pairs produces infinitely many distinct
automorphisms of $C_2$, contradicting the finiteness of
$\mathrm{Aut}_{K_2}(C_2)$ (which holds because $p_a(C_2)=3\ge2$).
Hence $|H_2| < 4(p_a(C_2)-1)$.
\end{proof}


\subsubsection{The local equation of $C_2$ when $Q_2$ is not semi-rational and $p_a(C_1)=1$}
In this section, we assume that $p=2$, $p_a(C_1)=1$, and $Q_2$ is not semi-rational. 

By Lemma 3.1, there exist positive integers $s$ and $t$ such that $\widetilde{\beta}:=z_0^{2^s} \in K_2 \setminus K_2^2$ and the local equation of $C_2$ at $Q_2$ is
\begin{equation}
w^4=z(z^{2^s}+\widetilde{\beta})^t+\widetilde{\varphi}(z)
\end{equation}
where $\widetilde{\varphi}(z) \in K_2[z^2]$ and no monomial $\widetilde{\varphi}(z)$ of belongs to $K_2^4[z^4] \setminus \{0\}$.

\begin{lemma}
Let $\widetilde{\gamma}_0:=\widetilde\varphi(z_0)$. If $p=2$, $p_a(C_1)=1$, and $Q_2$ is not semi-rational, then $\widetilde\gamma_0 \in K_2$, and
the local equation of $C_2$ at $Q_2$ is
$$
w^4=
\begin{cases}
z(z^2+\widetilde{\beta})+\widetilde{\varphi}(z),& \text{if $\widetilde\gamma_0 \in K_2 \setminus K_2^2$},\\[6pt]
z(z^2+\widetilde{\beta})^t+\widetilde{\varphi}(z), & \text{if $\widetilde\gamma_0 \in K_2^2$},
\end{cases}
$$
where $\widetilde{\varphi}(z) \in K_2[z^2]$ and no monomial $\widetilde{\varphi}(z)$ of belongs to $K_2^4[z^4]\setminus \{0\}$.
\end{lemma}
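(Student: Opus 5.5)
The plan is to pin down $s$ first, then show $\widetilde\gamma_0\in K_2$, and finally force $t=1$ in the non-square case by comparing with the equations for $C_1$ in Proposition 3.3 (ii)--(iii).

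\textbf{Step 1: $s=1$.} The Frobenius descent maps used throughout (Propositions 3.4 and 3.8, Lemma 3.6) identify the first coordinates of $C_1$ and $C_2$ through $z=x^2$. Hence the non-smooth points satisfy $z_0=x_0^2$.
\begin{itemize}
\item If $Q_1$ is semi-rational, then $x_0\in K_1$.
\item Otherwise, Proposition 3.3(iii) gives $x_0^2=\beta\in K_1$.
\end{itemize}
In both cases $z_0\in K_1=K_2^{1/2}$, so $z_0^2\in K_2$. By Equation (11), $s$ is the least positive integer with $z_0^{2^s}\in K_2$, so $s=1$ and $\widetilde\beta=z_0^2$. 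Since $\widetilde\varphi(z)\in K_2[z^2]$, we have $\widetilde\varphi(z)=g(z^2)$ with $g\in K_2[u]$. Therefore $\widetilde\gamma_0=\widetilde\varphi(z_0)=g(\widetilde\beta)\in K_2$. This already proves the second displayed case, with $s=1$ and arbitrary $t$.

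\textbf{Step 2: the case $\widetilde\gamma_0\in K_2\setminus K_2^2$.} Here I show $t=1$ by computing $p_a(C_1)$. Expand $g(u)=\widetilde\gamma_0+(u+\widetilde\beta)h(u)$, so Equation (11) reads
\[
w^4=z(z^2+\widetilde\beta)^t+\widetilde\gamma_0+(z^2+\widetilde\beta)h(z^2).
\]
Over $K_1$ I take square roots of the coefficients, as in Step 1 of Lemma 3.6. Concretely, put $\beta_1:=\widetilde\beta^{1/2}$, $\gamma_1:=\widetilde\gamma_0^{1/2}\in K_1\setminus K_1^2$, and let $h^{1/2}$ denote $h$ with each coefficient replaced by its square root. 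Then define
\[
x:=\frac{w^2-\gamma_1-(z+\beta_1)h^{1/2}(z)}{(z+\beta_1)^{t}}\quad\text{or its inverse, according to the parity of $t$,}
\]
arranged so that $z=x^2$ up to the usual normalisation. This gives an equation for $C_1$ of the form
\[
y^2=x(x^2+\beta_1)^{t'}+\gamma_1+(\text{terms in }K_1[x^2]).
\]
Because $\gamma_1\notin K_1^2$, nothing cancels, and this is the local equation of $C_1$ in the shape of Proposition 3.3(iii) or (ii). The genus formula after Lemma 2.5, $p_a(C_1)=\tfrac12(p-1)p^st$, then gives $p_a(C_1)=t$ up to the exponent bookkeeping. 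Since $p_a(C_1)=1$, this forces $t=1$.

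\textbf{Step 3: the case $\widetilde\gamma_0\in K_2^2$.} Here the constant $\gamma_1$ can be absorbed into $y$, exactly as in Subcase (a) of Proposition 3.4. No constraint on $t$ arises, which is why the second case keeps $t$ general.

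\textbf{Main obstacle.} The delicate step is the explicit descent in Step 2: choosing the substitution, depending on the parity of $t$, so that the resulting equation really is the regular model of $C_1$, and then tracking exponents to confirm $p_a(C_1)=t$ rather than some other multiple. I would first try the analogue of the $x$-substitution in Lemma 3.6 and verify regularity by the integral-closure argument of Lemma 3.5. I would use the condition $\widetilde\gamma_0\notin K_2^2$ in exactly the way Lemma 3.5 uses $\widetilde c_{l_1}\notin K_2^p$.
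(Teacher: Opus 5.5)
Your Step 1 is correct, and it is shorter than the paper's route. The paper gets $s=1$ and $\widetilde\gamma_0\in K_2$ by descending to Equation (12) and computing $p_a(C_1)$ case by case, with an iterative argument when $\widetilde\gamma_0\in K_2^2$. You read $s=1$ directly off Proposition 3.3 via $z_0=x_0^2$. You should add why Proposition 3.3 applies to your $x$ with $x^2=z$. This $x$ and the coordinate of Proposition 3.3 both generate the subfield $K_1\cdot K_1(C_1)^2$ of $K_1(C_1)$. So they differ by a $K_1$-M\"obius transformation, which preserves the conditions $x_0\in K_1$ and $x_0^2\in K_1$.

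\textbf{The gap is in Step 2.} Your $x$ already satisfies $x^2=z$, so no parity split is needed. The descended model is $w^2=Q(x)$ with $Q(x):=x(x^2+\beta_1)^t+\gamma_1+(x^2+\beta_1)h^{1/2}(x^2)$. You claim that because $\gamma_1\notin K_1^2$ ``nothing cancels'', so this is the local equation of $C_1$. That claim is false, because the residue field at $Q_1$ is $K_1(x_0)=K_1^{1/2}$, which already contains $\gamma_1^{1/2}$. For instance, take $\widetilde\varphi(z)=z^2$ and $t=3$:
\begin{itemize}
\item Then $\widetilde\gamma_0=\widetilde\beta\in K_2\setminus K_2^2$, $\gamma_1=\beta_1$ and $h=1$.
\item So $Q(x)=x(x^2+\beta_1)^3+x^2$, i.e.\ $(w+x)^2=x(x^2+\beta_1)^3$.
\item Hence $y=(w+x)/(x^2+\beta_1)$ is integral with $y^2=x(x^2+\beta_1)$, and $p_a(C_1)=1$ although $t=3$.
\end{itemize}
The curve $w^4=z(z^2+\widetilde\beta)^3+z^2$ satisfies everything your argument uses; it only fails to be regular at $Q_2$. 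So any proof of $t=1$ must use the regularity of $C_2$, which Step 2 never invokes. A Lemma 3.5-type check driven by $\gamma_1\notin K_1^2$ cannot supply this. The paper's case (ii) makes the same assertion (``similar to (i), Equation (12) is the local equation of $C_1$''), so appealing to it does not close the gap.

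\textbf{The missing idea is that regularity descends.} Set up the following:
\begin{itemize}
\item $u=w^2$ and $D=K_2[z,u]/(u^2-\widetilde P(z))$;
\item $A=K_2[z,w]/(w^4-\widetilde P(z))=D\oplus Dw$;
\item $\mathfrak m\subset A$ the maximal ideal of $Q_2$, and $\mathfrak n=\mathfrak m\cap D$.
\end{itemize}
For $\sigma\in A\setminus\mathfrak m$ we have $\sigma^2\in D\setminus\mathfrak n$. Hence $\mathcal O_{C_2,Q_2}=A_{\mathfrak m}=D_{\mathfrak n}\oplus D_{\mathfrak n}w$, and $D_{\mathfrak n}=A_{\mathfrak m}\cap K_2(z,u)$ is integrally closed. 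Squaring ($c\mapsto c^2$ on $K_1$, $x\mapsto z$, $w\mapsto u$) maps $K_1[x,w]/(w^2-Q(x))$ isomorphically onto $D$. So your descended model is regular at $Q_1$ and really is the local equation of $C_1$.

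Since $Q'(x)=(x+x_0)^{2t}$, over $\overline{K_1}$ we can write $Q=(x+x_0)^{2t+1}+R^2$. The singularity is therefore the cusp $(w+R)^2=(x+x_0)^{2t+1}$, which gives $p_a(C_1)=t$ and hence $t=1$.

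The same argument applied to the general descent (12) gives $p_a(C_1)=2^{s-1}t$ for every $s$ and every $\widetilde\gamma_0$. This recovers Step 1. It also shows that, contrary to your Step 3, $t=1$ when $\widetilde\gamma_0\in K_2^2$ as well, although the statement does not need this.
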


\begin{proof}
For Equation (11), set 
$$
x=\frac{w^2-\widetilde{\varphi}(z)^{1/2}}{(z^{2^{s-1}}+\widetilde{\beta}^{1/2})^t}.
$$
Then $z=x^2$ and
\begin{equation}
w^2=x(x^{2^s}+\widetilde{\beta}^{1/2})^t+\widetilde{\varphi}(x^2)^{1/2}.
\end{equation}

(i) Suppose $\widetilde\gamma_0=\widetilde{\varphi}(z_0) \notin K_2$. Then $\widetilde\gamma_0^{1/2} \notin K_1$ and the maximal ideal at $Q_1=(z_0^{1/2},\widetilde\gamma_0^{1/4})$ of the curve given by Equation (12) has exactly one generator, namely $x^{2^s}+\widetilde{\beta}^{1/2}$, so Equation (12) is precisely the local equation of $C_1$. As $p_a(C_1)=1$,
$$
\frac{1}{2}(2-1)(2^s t+1-1)=1,
$$
which yields $s=t=1$. Consequently, $z_0^{2}=\widetilde{\beta} \in K_2$. Combining this with $\widetilde{\varphi}(z) \in K_2[z^2]$, we have $\widetilde\gamma_0 \in K_2$, which is a contradiction.
Therefore, $\widetilde\gamma_0 \in K_2$.

(ii) If $\widetilde\gamma_0 \in K_2\setminus K_2^2$, similar to (i), Equation (12) is the local equation of $C_1$. As
$$
1=p_a(C_1)=\frac{1}{2}(2-1)(2^s t+1-1),
$$
we have $s=t=1$.

If $\widetilde\gamma_0 \in K_2^2$, set 
$$
\varphi(x):=\widetilde{\varphi}(x^2)^{1/2},\quad \gamma_0:=\varphi(z_0^{1/2})=\widetilde\gamma_0^{1/2},\quad \beta:=\widetilde{\beta}^{1/2},
$$ 
then $\gamma_0 \in K_1^2$ and $\varphi(x) \in K_1[x^2$].

\medskip
\noindent\textbf{Case 1: $\varphi(x) \equiv \gamma_0$.} We have that $t$ is odd. Otherwise, set
$$
y=\frac{w-\gamma_0^{1/2}}{(x^{2^s}+\beta)^{t/2}},
$$
then Equation (12) becomes $y^2=x$, which is not the local equation of $C_1$ because $C_1$ is non-smooth. Hence, $t$ is odd. Set
$$
y=\frac{w-\gamma_0^{1/2}}{(x^{2^s}+\beta)^{(t-1)/2}},
$$
then
$$
y^2=x(x^{2^s}+\beta)
$$
is exactly the local equation of $C_1$. As $1=p_a(C_1)=\frac{1}{2}(2-1)(2^s +1-1)$, $s=1$.

\medskip
\noindent\textbf{Case 2: $\varphi(x) \not\equiv \gamma_0$.}
There exist $r_0 \in \Bbb{Z}_{>0}$ and $\varphi_1(x)\in K_1[x^2]$ such that $\varphi_1(z_0^{1/2})\neq 0$ and
$$
\varphi(x)-\gamma_0=\varphi_1(x)(x^{2^s}+\beta)^{r_0}.
$$
Thus, Equation (12) becomes
$$
w^2 -\gamma_0= 
\begin{cases}
(x^{2^s}+\beta)^{t}[x+\varphi_1(x)(x^{2^s}+\beta)^{r_0-t}], & \text{if $t \leq r_0$},\\[6pt]
(x^{2^s}+\beta)^{r_0}[x(x^{2^s}+\beta)^{t-r_0}+\varphi_1(x)], & \text{if $t>r_0$},
\end{cases}
$$

\textbf{Case 2-1: $t\le r_0$.} Similar to Case 1, we have that $t$ is odd. Otherwise, set
$$
y=\frac{w-\gamma_0^{1/2}}{(x^{2^s}+\beta)^{t/2}},
$$
then $y^2=x+\varphi_1(x)(x^{2^s}+\beta)^{r_0-t}$ is not the local equation of $C_1$. Thus, $t$ is odd. Set
$$
y=\frac{w-\gamma_0^{1/2}}{(x^{2^s}+\beta)^{(t-1)/2}},
$$
then
$$
y^2=(x^{2^s}+\beta)[x+\varphi_1(x)(x^{2^s}+\beta)^{r_0-t}]
$$
is the local equation of $C_1$. As $1=p_a(C_1)=\frac{1}{2}(2-1)(2^s +1-1)$, $s=1$.

\textbf{Case 2-2: $t> r_0$.} We have that $r_0$ is even. Otherwise, set
$$
y=\frac{w-\gamma_0^{1/2}}{(x^{2^s}+\beta)^{(r_0-1)/2}},
$$
then
$$
y^2=(x^{2^s}+\beta)[x(x^{2^s}+\beta)^{t-r_0}+\varphi_1(x)]
$$
is exactly the local equation of $C_1$. As 
$$
\frac{1}{2}(2-1)[2^s(t-r_0+1) +1-1]\geq t-r_0+1\geq 2,
$$
which contradicts that $p_a(C_1)=1$. Hence, $r_0$ is even. Set
$$
y_1=\frac{w-\gamma_0^{1/2}}{(x^{2^s}+\beta)^{r_0/2}},
$$
then
$$
y_1^2=x(x^{2^s}+\beta)^{t-r_0}+\varphi_1(x).
$$
Similar to (i), we have $\gamma_1:=\varphi_1(z_0^{1/2}) \in K_1$. 

Continuing the case distinction according to whether $\gamma_1 \in K_1 \setminus K_1^2$ or $\gamma_1 \in K_1^2$, and repeating the above steps. Since $\varphi(x)$ has finite order, there exists a sufficiently large integer $n$ such that $\varphi_{n+1}(x)\equiv \varphi_{n+1}(z_0^{1/2})$. Similar to Case 1, we finally obtain $s=1$.
\end{proof}

\begin{rmk}
In fact, $\widetilde\varphi(z)$ in Lemma 3.12 admits a more explicit description.

If $\widetilde{\varphi}(z) \not\equiv \widetilde{\gamma}_0$,
then there exist $r_0 \in \Bbb{Z}_{>0}$ and $\widetilde{\varphi}_1(z)\in K_2[z^2]$ such that 
$\widetilde{\varphi}_1(z_0)\neq 0$ and
$$
\widetilde{\varphi}(z)=\widetilde{\gamma}_0+\widetilde{\varphi}_1(z)(z^2+\widetilde{\beta})^{r_0}.
$$
As $\widetilde{\varphi}_1(z)\in K_2[z^2]$, $\widetilde{\gamma}_1:=\widetilde\varphi_1(z_0) \in K_2$. 
Similarly, if $\widetilde{\varphi}_1(z) \not\equiv \widetilde{\gamma}_1$,
then there exist $r_1 \in \Bbb{Z}_{>0}$ and $\widetilde{\varphi}_2(z)\in K_2[z^2]$ such that 
$\widetilde{\varphi}_2(z_0)\neq 0$ and
$$
\widetilde{\varphi}_1(z)=\widetilde{\gamma}_1+\widetilde{\varphi}_2(z)(z^2+\widetilde{\beta})^{r_1}.
$$

Repeating the above procedure and noting that $\widetilde{\varphi}(z)$ has finite degree, there exists an integer $n$ such that $\widetilde{\varphi}_{n+1}(z)\equiv \widetilde{\varphi}_{n+1}(z_0)$ and
$$
\widetilde{\varphi}_n(z)=\widetilde{\gamma}_n+\widetilde{\varphi}_{n+1}(z)(z^{2}+\widetilde{\beta})^{r_n},
$$
where $\widetilde{\gamma}_n:=\widetilde{\varphi}_{n}(z_0)$ and $r_n \in \Bbb{Z}_{>0}$. Thus,
we have
$$
\widetilde{\varphi}(z)=\widetilde{\gamma}_0+\widetilde{\gamma}_1(z^2+\widetilde{\beta})^{r_0}+\widetilde{\gamma}_2(z^2+\widetilde{\beta})^{r_0+r_1}+\cdots+\widetilde{\gamma}_{n+1}(z^2+\widetilde{\beta})^{r_0+r_1+\cdots+r_n},
$$
where $n\in \Bbb{Z}_{\geq 0}$, $r_i\in \Bbb{Z}_{>0}$, and $\widetilde{\gamma}_i\in K_2$. Select the nonconstant monomials in $\widetilde{\varphi}(z)$ that lie in $K_2^2[z^4+\widetilde{\beta}^2]$, and denote their sum by $\widetilde{\psi}^2(z^2+\widetilde{\beta})$. By definition, $\widetilde{\psi}(z)\in z\cdot K_2[z]$.

Therefore, whether $\widetilde{\varphi}(z) \equiv \widetilde{\gamma}_0$ or not, we may assume
$$
\widetilde{\varphi}(z)=\widetilde{\gamma}_0+\sum_{i=1}^{m} \widetilde{\gamma}_{l_i}(z^2+\widetilde{\beta})^{l_i}+\widetilde{\psi}^2(z^2+\widetilde{\beta}),
$$
where $\widetilde{\gamma}_0\in K_2$, $0<l_1<l_2<\cdots<l_m$,
$\widetilde{\gamma}_{l_i}(z^2+\widetilde{\beta})^{l_i} \in K_2[z^2]\setminus K_2^{*2}[z^4+\widetilde\beta^2]$, and $\widetilde{\psi}(z)\in z\cdot K_2[z]$.
\end{rmk}

By Lemma 3.12 and Remark 3.13, we immediately obtain the following result.

\begin{rmk}
If $\widetilde\gamma_0\in K_2\setminus K_2^2$, then the local equation of $C_2$ at $Q_2$ is
\[
w^4 = z(z^2+\widetilde\beta) + \widetilde\gamma_0 + \sum_{i=1}^{m}\widetilde\gamma_{l_i}(z^2+\widetilde\beta)^{l_i} + \widetilde\psi^2(z^2+\widetilde\beta),
\]
where $\widetilde\beta\in K_2\setminus K_2^2$, $0<l_1<l_2<\cdots<l_m$, $\widetilde\gamma_{l_i}(z^2+\widetilde\beta)^{l_i}\in K_2[z^2]\setminus K_2^{*2}[z^4+\widetilde\beta^2]$, $\widetilde\psi(z)\in z\cdot K_2[z]$, and no monomial of  $\widetilde\psi(z)$ belongs to $K_2^2[z^2]$ (those in $K_2^2[z^2]$ can be absorbed into $w^4$).
\end{rmk}

\begin{thm}
If $p=2$, $p_a(C_1)=1$, and $Q_2$ is not semi-rational, then the local equation of $C_2$ at $Q_2$ is
\[
w^4 = z(z^2+\widetilde\beta) + \widetilde\gamma_0 + \sum_{i=1}^{m}\widetilde\gamma_{l_i}(z^2+\widetilde\beta)^{l_i} + \widetilde\psi^2(z^2+\widetilde\beta),
\]
where $\widetilde\beta\in K_2\setminus K_2^2$, $\widetilde{\gamma}_0 \in K_2$, $0<l_1<l_2<\cdots<l_m$, $\widetilde\gamma_{l_i}(z^2+\widetilde\beta)^{l_i}\in K_2[z^2]\setminus K_2^{*2}[z^4+\widetilde\beta^2]$, $\widetilde\psi(z)\in z\cdot K_2[z]$, and no monomial of  $\widetilde\psi(z)$ belongs to $K_2^2[z^2]$.
\end{thm}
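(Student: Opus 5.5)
The proof is a case split on $\widetilde\gamma_0:=\widetilde\varphi(z_0)$, which Lemma 3.12 already places in $K_2$. The two cases are $\widetilde\gamma_0\in K_2\setminus K_2^2$ and $\widetilde\gamma_0\in K_2^2$. In each case we take the equation from Lemma 3.12, rewrite $\widetilde\varphi$ using Remark 3.13, and then show that only the exponent $t=1$ can occur.

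\emph{Case $\widetilde\gamma_0\in K_2\setminus K_2^2$.} Lemma 3.12 gives $s=t=1$, so the equation is $w^4=z(z^2+\widetilde\beta)+\widetilde\varphi(z)$. Remark 3.13 expands $\widetilde\varphi(z)$ in powers of $z^2+\widetilde\beta$:
\[
\widetilde\varphi(z)=\widetilde\gamma_0+\sum_{i=1}^{m}\widetilde\gamma_{l_i}(z^2+\widetilde\beta)^{l_i}+\widetilde\psi^2(z^2+\widetilde\beta),
\]
with $\widetilde\psi(z)\in z\cdot K_2[z]$. Any monomial of $\widetilde\psi$ lying in $K_2^2[z^2]$ contributes a term of $K_2^4[(z^2+\widetilde\beta)^4]$, which is a fourth power in $K_2[z]$. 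Such terms are absorbed by a translation $w\mapsto w-(\cdot)^{1/4}$, exactly as in Lemma 3.1. This gives the stated equation, which is Remark 3.14.

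\emph{Case $\widetilde\gamma_0\in K_2^2$.} Here Lemma 3.12 only gives $s=1$, so we must still prove $t=1$. We follow the descent in the proof of Lemma 3.12. First set $x=(w^2-\widetilde\varphi(z)^{1/2})/(z+\widetilde\beta^{1/2})^t$, which gives $z=x^2$ and
\[
w^2=x(x^2+\beta)^t+\varphi(x),\qquad \beta=\widetilde\beta^{1/2}.
\]
Then peel off even powers of $(x^2+\beta)$ as in Cases 1 and 2 of Lemma 3.12. This produces the local equation of $C_1$ in the form of Proposition 3.3(iii),
\[
y^2=x(x^2+\beta)+\varphi'(x),
\]
and in the process $t$ is divided by the even part that was removed.

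The claim is that when $t\ge 3$ ($t$ odd, by Cases 1 and 2-1), the stripped factor $(x^2+\beta)^{(t-1)/2}$ makes $C_2$ fail to be regular at $Q_2$. The model is Lemma 3.5. Put
\[
\alpha=\frac{w^2-\gamma_0^{1/2}}{(z+\beta)^{(t-1)/2}}\quad\text{or its suitable analogue},
\]
so that $\alpha$ is integral over $A=K_2[z,w]/(w^4-\widetilde P(z))$. Writing $\alpha=u/v$ with $v\notin\mathfrak m$, compare coefficients in the free $K_2[z]$-basis $1,w,w^2,w^3$, exactly as in Lemma 3.5. Evaluating at $z_0$, the $p$-th power obstruction should force $v\in\mathfrak m$, a contradiction. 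In Lemma 3.5 the obstruction was $\widetilde c_{l_1}\notin K_2^p$; here it should be supplied by $\widetilde\beta\notin K_2^2$, together with the leading term $z(z^2+\widetilde\beta)$, which is not a square. Hence $t=1$.

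With $t=1$ in hand, the same Remark 3.13 expansion and the same absorption of the $K_2^2[z^2]$-part of $\widetilde\psi$ into $w^4$ finish this case, and the two cases together give the theorem.

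The main obstacle is the regularity argument excluding $t\ge3$ in the second case. The inseparable residue field at $Q_2$ (recall $z_0\notin K_2$) makes the evaluation step of Lemma 3.5 subtler: one must work modulo $z^2+\widetilde\beta$ instead of $z$, and check that the relevant residue is not a square in $k(Q_2)$.
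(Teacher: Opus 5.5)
Your overall architecture matches the paper's. The case $\widetilde\gamma_0\in K_2\setminus K_2^2$ is exactly Remark 3.14. For $\widetilde\gamma_0\in K_2^2$ the paper also gets the parity of $t$ from the descent to $C_1$ and then excludes $t\ge 3$ by a Lemma 3.5--type integral-closure argument (its Lemma 3.16). There are, however, two genuine gaps.

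\textbf{The key step is not carried out, and the mechanism you guess does not work.} First, the element must lie in $K_2(C_2)$ and be integral. Take $\alpha=\bigl(w^2-\widetilde\gamma_0^{1/2}-\widetilde\psi(z^2+\widetilde\beta)\bigr)/(z^2+\widetilde\beta)^{(t-1)/2}$, which uses $\widetilde\gamma_0\in K_2^2$. Your $\gamma_0^{1/2}=\widetilde\gamma_0^{1/4}$ and $z+\beta$ with $\beta\notin K_2$ are not in $K_2(C_2)$, and without subtracting $\widetilde\psi$ the square $\alpha^2$ is not a polynomial. With this $\alpha$ one has $\alpha^2=z(z^2+\widetilde\beta)+\sum_i\widetilde\gamma_{l_i}(z^2+\widetilde\beta)^{l_i-t+1}$. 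Writing $\alpha=u/v$, $v^4=g(z)$, $uv^3=\sum f_iw^i$ gives $f_0^2+f_2^2\widetilde P=g^2\alpha^2$.

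Evaluating at $z_0$ yields nothing. Both $\alpha^2$ and $\widetilde P-\widetilde\gamma_0$ vanish there, leaving $f_0(z_0)^2+f_2(z_0)^2\widetilde\gamma_0=0$. This is consistent, since $\widetilde\gamma_0$ is a square, and says nothing about $g(z_0)$; no non-square residue in $k(Q_2)$ appears. What works is splitting $K_2[z]=K_2[z^2]\oplus zK_2[z^2]$. Everything lies in $K_2[z^2]$ except $f_2^2z(z^2+\widetilde\beta)^t$ on the left and $g^2z(z^2+\widetilde\beta)$ on the right. Hence $g=f_2(z^2+\widetilde\beta)^{(t-1)/2}$, which lies in $\mathfrak M$ when $t\ge3$, contradicting $v\notin\mathfrak M$.

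\textbf{You drop a case.} ``$t$ odd by Cases 1 and 2-1'' only covers $\widetilde\varphi$ without $\widetilde\gamma_{l_i}$-terms, or $t\le l_1$. When $t>l_1$, $t$ need not be odd. Moreover, the $\alpha$ above has a pole along $z^2+\widetilde\beta$ as soon as $l_1<t-1$, so it is not integral over $A$. The paper treats this separately:
\begin{enumerate}
\item[(a)] If $l_1$ is odd, stripping $(x^2+\beta)^{(l_1-1)/2}$ gives the local equation of $C_1$ with $p_a(C_1)=t-l_1+1\ge2$, a contradiction.
\item[(b)] If $l_1$ is even, one uses $\alpha=\bigl(w^2-\widetilde\gamma_0^{1/2}-\widetilde\psi(z^2+\widetilde\beta)\bigr)/(z^2+\widetilde\beta)^{l_1/2}$. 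The same odd-part comparison gives $g=f_2(z^2+\widetilde\beta)^{l_1/2}\in\mathfrak M$, again a contradiction.
\end{enumerate}
Without these two pieces your argument does not yet establish $t=1$.
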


\begin{proof}
By Lemma 3.12 and Remark 3.14, the proof of the theorem reduces to the case 
$$
w^4=z(z^2+\widetilde{\beta})^t+\widetilde{\varphi}(z)
$$
and $\widetilde\gamma_0:=\widetilde{\varphi}(z_0) \in K_2^2$. According to Remark 3.13, we have
$$
\widetilde{\varphi}(z)=\widetilde\gamma_0 + \sum_{i=1}^{m}\widetilde\gamma_{l_i}(z^2+\widetilde\beta)^{l_i} + \widetilde\psi^2(z^2+\widetilde\beta),
$$
where $\widetilde\beta\in K_2\setminus K_2^2$, $0<l_1<l_2<\cdots<l_m$, $\widetilde\gamma_{l_i}(z^2+\widetilde\beta)^{l_i}\in K_2[z^2]\setminus K_2^{*2}[z^4+\widetilde\beta^2]$, $\widetilde\psi(z)\in z \cdot K_2[z]$, and no monomial of  $\widetilde\psi(z)$ belongs to $K_2^2[z^2]$. 

Set $\gamma_0:=\widetilde{\gamma}_0^{\frac{1}{2}}$, $\gamma_{l_i}:=\widetilde{\gamma}_{l_i}^{\frac{1}{2}}$, $\beta:=\widetilde{\beta}^{\frac{1}{2}}$, and $\psi(x):=\widetilde{\psi}^{\frac{1}{2}}(x^2)$.
We will prove $t=1$ by considering cases separately.

\medskip
\noindent\textbf{Case 1: $\widetilde{\varphi}(z)=\widetilde\gamma_0+\widetilde\psi^2(z^2+\widetilde\beta)$.} 
Set
$$
x=\frac{w^2-\widetilde{\varphi}^{\frac{1}{2}}(z)}{(z+\beta)^t},
$$
we have $z=x^2$ and 
$$
w^2=x(x^2+\beta)^t+\widetilde{\varphi}^{\frac{1}{2}}(z)=x(x^2+\beta)^t+\gamma_0+\psi^2(x^2+\beta).
$$

Observe that $t$ is odd. Otherwise, as $\gamma_0 \in K_1^2$, we may set
$$
y=\frac{w-\gamma_0^{\frac{1}{2}}-\psi(x^2+\beta)}{(x^2+\beta)^{t/2}},
$$
then $y^2=x$, which is not the local eqaution of $C_1$. Hence, $t$ is odd. Note that $C_2$ is regular, and that the local equation of $C_2$ at $Q_2$ is
$$
w^4=z(z^{2}+\widetilde{\beta})^t+\widetilde{\gamma}_0+\widetilde{\psi}^2(z^2+\widetilde{\beta}),
$$
so $t=1$ by Lemma 3.16 (i).

\medskip
\noindent\textbf{Case 2: $\widetilde{\varphi}(z)=\widetilde\gamma_0 + \sum_{i=1}^{m}\widetilde\gamma_{l_i}(z^2+\widetilde\beta)^{l_i} + \widetilde\psi^2(z^2+\widetilde\beta)$ with $\widetilde{\gamma}_{l_i}(z^2+\widetilde{\beta})^{l_i} \notin K^2[z^4+\widetilde{\beta}^2]$.} 

In this case, set
$$
x=\frac{w^2-\widetilde{\varphi}^{\frac{1}{2}}(z)}{(z+\beta)^t},
$$
we have $z=x^2$ and 
$$
w^2=x(x^2+\beta)^t+\widetilde{\varphi}^{\frac{1}{2}}(z)=x(x^2+\beta)^t+\gamma_0+ \sum_{i=1}^{m} \gamma_{l_i}(x^2+ \beta)^{l_i}   + \psi^2(x^2+\beta).
$$
Introduce $y_0 = w - \gamma_0^{1/2} - \psi(x^{2}+\beta)$.  Then
\begin{equation}\label{eq:y0}
y_0^2 = x(x^2+\beta)^t + \sum_{i=1}^{m}\gamma_{l_i}(x^2+\beta)^{l_i}.
\end{equation}

\textbf{Case 2-1: $t\le l_1$.}
If $t$ is even, putting $y = y_0/(x^2+\beta)^{t/2}$ would give 
$$
y^2=x+\sum_{i=1}^{m}\gamma_{l_i}(x^2+\beta)^{l_i-t},
$$
which contradicting the non-smoothness of $C_1$. Thus, $t$ is odd. Note that $C_2$ is regular, and that the local equation of $C_2$ is
\[
w^4 = z(z^2+\widetilde\beta)^t + \widetilde\gamma_0 + \sum_{i=1}^{m}\widetilde\gamma_{l_i}(z^2+\widetilde\beta)^{l_i} + \widetilde\psi^2(z^2+\widetilde\beta),
\]
so $t=1$ by Lemma 3.16 (ii).

\textbf{Case 2-2: $t>l_1\ge 1$.}
If $l_1$ is odd, set $y = y_0/(x^2+\beta)^{\frac{l_1-1}{2}}$; then (13) becomes
\[
y^2 = (x^2+\beta)\Bigl[x(x^2+\beta)^{t-l_1} +\gamma_{l_1}+ \sum_{i=2}^{m}\gamma_{l_i}(x^2+\beta)^{l_i-l_1}\Bigr],
\]
which is the local equation of $C_1$, so $p_a(C_1)=\frac{1}{2}(2-1)[2(t-l_1+1)+1-1] = t-l_1+1\ge 2$. This contradicts $p_a(C_1)=1$.  Hence $l_1$ is even. By Lemma 3.16 (iii), this case is imppossible.
\end{proof}

\begin{lemma}
Let $\widetilde\beta \in K\setminus K^2$, $\widetilde{\gamma}_0\in K^2$, $\widetilde{\psi}(z)\in z\cdot K[z]$, 
$0<l_1<\cdots<l_m$, and 
$\widetilde{\gamma}_{l_i}(z^2+\widetilde{\beta})^{l_i} \in K[z]\setminus K^2[z^4+\widetilde{\beta}^2]$ 
for $1\leq i\leq m$.  Suppose $C$ is a projective curve over $K$ of characteristic $p=2$ and its local equation is one of the following:
\begin{enumerate}
\item[(i)] $w^4=\widetilde{P}_1(z):=z(z^{2}+\widetilde\beta)^t+\widetilde{\gamma}_0+\widetilde{\psi}^2(z^2+\widetilde{\beta})$, where $t$ is odd;
\item[(ii)] $w^4=\widetilde{P}_2(z):=z(z^{2}+\widetilde\beta)^t+\widetilde{\gamma}_0+\sum_{i=1}^{m} \widetilde{\gamma}_{l_i}(z^2+\widetilde{\beta})^{l_i}+\widetilde{\psi}^2(z^2+\widetilde{\beta})$, where $t$ is odd and $t\leq l_1$;
\item[(iii)] $w^4=\widetilde{P}_3(z):=z(z^{2}+\widetilde\beta)^t+\widetilde{\gamma}_0+\sum_{i=1}^{m} \widetilde{\gamma}_{l_i}(z^2+\widetilde{\beta})^{l_i}+\widetilde{\psi}^2(z^2+\widetilde{\beta})$, where $t$ is odd, $l_1$ is even, and $t>l_1$.
\end{enumerate}
If $C$ is regular, then its local equation must be of type (i) or (ii), and $t=1$.
\end{lemma}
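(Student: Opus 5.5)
The argument will mirror the proof of Lemma 3.5: assume the conclusion fails, exhibit an element $\alpha$ of the fraction field that is integral over $A:=K[z,w]/(w^4-\widetilde P_j(z))$ but not in the local ring $A_m$, and contradict the normality of $A_m$ that follows from regularity of $C$. At the non-smooth point $Q=(z_0,w_0)$ with $z_0^2=\widetilde\beta$, we have
\[
\widetilde P_j(z_0)=\widetilde\gamma_0+\widetilde\psi^2(z_0^2+\widetilde\beta)=\widetilde\gamma_0+\widetilde\psi^2(0)=\widetilde\gamma_0\in K^2,
\]
because $\widetilde\psi\in z\cdot K[z]$. So $w_0^2=\widetilde\gamma_0^{1/2}$ and $m=(z^2+\widetilde\beta,\;w^2-\widetilde\gamma_0^{1/2})$. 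Note that $w^2-\widetilde\gamma_0^{1/2}-\widetilde\psi(z^2+\widetilde\beta)$ squares to $\widetilde P_j-\widetilde\gamma_0-\widetilde\psi^2(z^2+\widetilde\beta)$. This element, denoted $u$, is the main tool.

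\emph{Type (i) with $t\ge 3$.} Here $u^2=z(z^2+\widetilde\beta)^t$. Put $\alpha:=u/(z^2+\widetilde\beta)^{(t-1)/2}$. Then $\alpha^2=z(z^2+\widetilde\beta)\in A$, so $\alpha$ is integral over $A$ and hence $\alpha\in A_m$. Write $\alpha=U/V$ with $V\notin m$ and $V^4=g_0(z)\in K[z]$, and expand $UV^3$ in the basis $1,w,w^2,w^3$. Squaring gives
\[
F_0(z)+F_1(z)w^2=g_0(z)^2\,z(z^2+\widetilde\beta),
\]
so $F_1=0$ and $F_0=f_0^2+f_2^2\widetilde P_1$. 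Next differentiate with respect to $z$, or reduce modulo $z^2+\widetilde\beta$ after separating the odd part in $z$. The left side lies in $K^2[z^2]+K^2[z^2]\widetilde P_1$. Comparing the coefficient of the part $z\cdot(\text{square})$ forces $(z^2+\widetilde\beta)\mid g_0$, that is, $g_0\in m$, which is a contradiction. It is cleaner to first check that $C$ is regular at $Q$ only if the maximal ideal $m$ is principal, so I would alternatively compute directly: $m/m^2$ is one-dimensional only if $z^2+\widetilde\beta$ or $w^2-\widetilde\gamma_0^{1/2}$ generates, and $u^2\in (z^2+\widetilde\beta)^t$ shows that $z^2+\widetilde\beta\in m^2$ when $t\ge 3$. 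I would present this regularity computation, the generator count at $Q$ in the style of the proof of Lemma 3.12(i), as the main argument, with the integral-element argument as backup.

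\emph{Type (ii).} Here $u^2=(z^2+\widetilde\beta)^t\bigl[z+\sum_i\widetilde\gamma_{l_i}(z^2+\widetilde\beta)^{l_i-t}\bigr]$. Take $\alpha=u/(z^2+\widetilde\beta)^{(t-1)/2}$. The same argument as in Lemma 3.5 applies, now using that the bracket is a unit at $Q$ if $l_1=t$, or has the odd part $z$ otherwise, and yields $t=1$.

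\emph{Type (iii).} Here $l_1$ is even and $l_1<t$, so $u^2=(z^2+\widetilde\beta)^{l_1}\bigl[\widetilde\gamma_{l_1}+\cdots\bigr]$. Put $\alpha:=u/(z^2+\widetilde\beta)^{l_1/2}$. Then $\alpha^2=\widetilde\gamma_{l_1}+z(z^2+\widetilde\beta)^{t-l_1}+\sum_{i\ge2}\widetilde\gamma_{l_i}(z^2+\widetilde\beta)^{l_i-l_1}\in A$. Following Lemma 3.5 step by step and evaluating at $z^2=\widetilde\beta$ gives $\widetilde\gamma_{l_1}g_0(z_0)^2\in K^2(z_0)$-type constraints. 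The hypothesis $\widetilde\gamma_{l_1}(z^2+\widetilde\beta)^{l_1}\notin K^2[z^4+\widetilde\beta^2]$ together with $l_1$ even means $\widetilde\gamma_{l_1}\notin K^2$, which forces $(z^2+\widetilde\beta)\mid g_0$. Hence $V\in m$, a contradiction, so type (iii) cannot occur.

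The main obstacle is that the residue field at $Q$ is inseparable, $K(z_0)$ with $z_0^2\in K$. Because of this, the "evaluate at $0$" step of Lemma 3.5 must be replaced by reduction modulo $z^2+\widetilde\beta$, keeping track of which elements are squares in $K(z_0)$. I will handle this carefully, using the condition $\widetilde\beta\notin K^2$ to separate the $z$-odd part from the $z$-even part.
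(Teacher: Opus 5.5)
There is a genuine gap in your treatment of type (iii). Reducing $f_0^2+f_2^2\widetilde P_3=g_0^2\alpha^2$ modulo $z^2+\widetilde\beta$ gives $\widetilde\gamma_{l_1}\,g_0(z_0)^2\in K(z_0)^2$. Since $z_0^2=\widetilde\beta$, we have $K(z_0)^2=\{a^2+b^2\widetilde\beta : a,b\in K\}=K^2+K^2\widetilde\beta$, which is strictly larger than $K^2$.

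With $l_1$ even, the hypothesis only gives $\widetilde\gamma_{l_1}\notin K^2$. It does not exclude $\widetilde\gamma_{l_1}\in K^2+K^2\widetilde\beta$. For instance, $\widetilde\gamma_{l_1}=\widetilde\beta$ satisfies every hypothesis of the lemma. Yet $\widetilde\gamma_{l_1}=z_0^2$ is then a square in the residue field, your evaluation gives no contradiction, and you cannot conclude $(z^2+\widetilde\beta)\mid g_0$. This is exactly the inseparability obstacle you flag in your last paragraph, and tracking squares in $K(z_0)$ does not resolve it.

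The repair is to use in type (iii) the same odd-part comparison you already use in type (i), and this is how the paper handles all three types uniformly:
\begin{itemize}
\item In characteristic $2$ we have $f_0^2,f_2^2,g_0^2\in K[z^2]$.
\item Every summand of $\widetilde P_3$ other than $z(z^2+\widetilde\beta)^t$ lies in $K[z^2]$.
\item The $z$-odd part of $\alpha^2$ is $z(z^2+\widetilde\beta)^{t-l_1}$.
\end{itemize}
Comparing odd parts gives $f_2^2z(z^2+\widetilde\beta)^t=g_0^2z(z^2+\widetilde\beta)^{t-l_1}$. Hence $g_0=f_2(z^2+\widetilde\beta)^{l_1/2}$, which lies in $\mathfrak M$ because $l_1\ge 2$. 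This uses nothing about $\widetilde\gamma_{l_1}$.

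For types (i) and (ii), your integral element together with the odd-part (or $d/dz$) comparison coincides with the paper's proof; it yields $g_0=f_2(z^2+\widetilde\beta)^{(t-1)/2}$. In (ii) the odd part of $\alpha^2$ is $z(z^2+\widetilde\beta)$ in every subcase, so the distinction between $l_1=t$ and $l_1>t$ is unnecessary.

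Keep this as your main argument rather than the generator count, which has three problems:
\begin{itemize}
\item The claim that $u^2\in((z^2+\widetilde\beta)^t)$ puts $z^2+\widetilde\beta$ in $m^2$ only follows from parity in $2v(u)=t\,v(z^2+\widetilde\beta)$.
\item That parity argument works equally for $t=1$, so it is not where $t\ge 3$ enters. You would still have to rule out $u$ being a uniformizer, and that is the step that uses $t\ge3$.
\item Your description $m=(z^2+\widetilde\beta,\,w^2-\widetilde\gamma_0^{1/2})$ fails when $\widetilde\gamma_0^{1/4}\in K(z_0)$, for example when $\widetilde\gamma_0=0$.
\end{itemize}
The integral-element argument needs only $z^2+\widetilde\beta\in\mathfrak M$, which always holds.
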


\begin{proof}
Set $A = K[w,z]/(w^4-\widetilde{P}_j(z))$ ($1\leq j\leq 3$), 
$\mathfrak{M} = (\widetilde\beta^{1/2}, \widetilde\gamma_0^{1/4})$.  Since $C$ is regular, $A_{\mathfrak{M}}$ is integrally closed.
Define
\[
\alpha = \frac{w^2-\widetilde\gamma_0^{1/2}-\widetilde\psi(z^2+\widetilde\beta)}{(z^2+\widetilde\beta)^{s}},
\quad\text{where}\quad
s = \begin{cases}
\frac{t-1}{2}, & \text{in (i) and (ii)},\\[4pt]
\frac{l_1}{2}, & \text{in (iii)}.
\end{cases}
\]
Using $w^4=\widetilde{P}_j(z)$ we obtain
\[
\alpha^2 = \begin{cases}
z(z^2+\widetilde\beta), & \text{case (i)},\\[6pt]
z(z^2+\widetilde\beta) + \sum_{i=1}^{m} \widetilde\gamma_{l_i}(z^2+\widetilde\beta)^{l_i-(t-1)}, & \text{case (ii)},\\[6pt]
z(z^2+\widetilde\beta)^{t-l_1} + \sum_{i=1}^{m} \widetilde\gamma_{l_i}(z^2+\widetilde\beta)^{l_i-l_1}, & \text{case (iii)}.
\end{cases}
\]
In all cases, $\alpha$ is integral over $A$, and hence $\alpha\in A_\mathfrak{M}$.
Write $\alpha = u/v$ with $v\notin \mathfrak{M}$ and $u\in A$. Since $A$ is a free $K[z]$-module with basis $1,w,w^2,w^3$, we may assume
\[
v^4 = g(z)\in K[z], \qquad
uv^3 = f_0(z)+f_1(z)w+f_2(z)w^2+f_3(z)w^3
\]
for some $f_i(z),g(z)\in K[z]$. Then
\[
(uv^3)^2 = g(z)^2\alpha^2.
\]
Substituting the above expressions, we have
$$
[f_0(z)^2+f_2(z)^2 \cdot \widetilde{P}_j(z)]+[f_1(z)^2+f_3(z)^2 \cdot \widetilde{P}_j(z)]\cdot w^2=g(z)^2 \alpha^2.
$$
By comparing coefficients, we obtain
\begin{equation}
f_0(z)^2 + f_2(z)^2\widetilde{P}_j(z) = g(z)^2\alpha^2.
\end{equation}

Now collect the terms involving $z$ on both sides of (14). Thus,
\[
f_2(z)^2 \cdot z(z^2+\widetilde\beta)^t= \begin{cases}
g(z)^2 \cdot z(z^2+\widetilde\beta), & \text{cases (i) and (ii)},\\[6pt]
g(z)^2 \cdot z(z^2+\widetilde\beta)^{t-l_1}, & \text{case (iii)}.
\end{cases}
\]

In (iii), we have $g(z) = f_2(z)(z^2+\widetilde\beta)^{\frac{l_1}{2}}$. As $l_1\ge2$ is even, $g(z)\in \mathfrak{M}$, contradicting $v\notin \mathfrak{M}$. Thus, if $C$ is a regular, its local equation cannot be of type (iii).

In (i) and (ii), we obtain $g(z) = f_2(z)(z^2+\widetilde\beta)^{\frac{t-1}{2}}$. If $t>1$, then $g(z)\in \mathfrak{M}$, contradicting $v\notin \mathfrak{M}$. Therefore, when $C$ is a regular, $t=1$.
\end{proof}

\subsubsection{The Sylow $p$-group of $\mathrm{Aut}_{K_2}(C_2)$ when $Q_2$ is not semi-rational}

\begin{rmk}
Let $\mathrm{PGL}(2,K_0)$ be the automorphism group of $\mathbb{P}_{K_0}^1$,
and $K_0(\tau)$ the function field of $\mathbb{P}_{K_0}^1$.
In characteristic $2$, if $\sigma(\tau) = \frac{a\tau +b}{c\tau +d} \in \mathrm{PGL}(2,K_0)$ has a unique fixed point $\tau = \beta^{1/4}$, then the equation
$$
c\tau^2 + (d-a)\tau - b = 0
$$
has a unique root $\tau = \beta^{1/4}$. Hence $(d-a)^2+4bc=0$, so $d=a$. Thus,
$c \beta^{1/2} =b$. Therefore, we obtain that either $\sigma \tau=\beta^{1/2}/\tau$ or
$$
\sigma \tau=\frac{\tau+\theta_0 \beta^{\frac{1}{2}}}{\theta_0 \tau+1},\,\,\theta_0 \in K_0.
$$

Indeed, in Theorem 3.15, every nontrivial element $\sigma$ of the Sylow $p$-subgroup of $\mathrm{Aut}_{K_2}(C_2)$ has exactly one fixed point $Q_2=(\widetilde{\beta}^{1/2},\widetilde\gamma_0^{1/4})$ on $C_2$ (see \cite{s3}, Lemma 3.2). Viewing $\sigma$ as an element of $\mathrm{PGL}(2,K_0)$, this fixed point corresponds to the point $\tau=\beta^{1/4}$ on $\mathbb{P}^1_{K_0}$, where $\beta:=\widetilde{\beta}^{1/2}$.
\end{rmk}

Adopting the notation of Theorem 3.15, we have the following result.

\begin{thm}
The Sylow $p$-subgroup $H_2$ of $\mathrm{Aut}_{K_2}(C_2)$ has order at most $2$.
\end{thm}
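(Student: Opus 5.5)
Following the pattern of Propositions 3.8 and 3.9, I will take a nontrivial $\sigma\in H_2$ and pin it down by descending to $C_1$ and then to $C_0\cong\mathbb{P}^1_{K_0}$. By Theorem 3.15 the local equation of $C_2$ is
\[
w^4 = z(z^2+\widetilde\beta) + \widetilde\gamma_0 + \sum_{i=1}^{m}\widetilde\gamma_{l_i}(z^2+\widetilde\beta)^{l_i} + \widetilde\psi^2(z^2+\widetilde\beta).
\]
The descent goes in two steps. First set $x=(w^2-\widetilde\varphi(z)^{1/2})/(z+\beta)$, with $\beta=\widetilde\beta^{1/2}$, so that $z=x^2$. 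Then take a second substitution $\tau$ of the same shape, $\tau=(y-\cdots)/(x+\beta^{1/2})$ or its inverse, so that $x=\tau^2$ and $C_0$ is parametrized by $\tau$. The unique non-smooth point $Q_2$ then corresponds to $\tau=\beta^{1/4}$.

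By Remark 3.17, $\sigma$ acting on $K_0(\tau)$ is either the involution $\tau\mapsto\beta^{1/2}/\tau$ or $\sigma_{\theta_0}\colon\tau\mapsto(\tau+\theta_0\beta^{1/2})/(\theta_0\tau+1)$ with $\theta_0\in K_0$. Every element of $H_2$ comes from $\mathrm{Aut}_{K_0}(C_0)$ by Remark 2.4, and composition in $H_2$ matches composition in $\mathrm{PGL}(2,K_0)$. Since $\sigma_{\theta_0}\sigma_{\theta_0'}=\sigma_{\theta}$ with $\theta=(\theta_0+\theta_0')/(1+\beta^{1/2}\theta_0\theta_0')$, the maps of the second type, together with the identity, form a group $T$ that is abelian of exponent $2$. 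Hence $H_2$ lies in the group generated by $T$ and possibly one involution.

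The main step is to show that at most one nontrivial element survives. I will push $\sigma_{\theta_0}$ back to $C_1$ and then to $C_2$, computing $\sigma x=\sigma(\tau)^2$ and $\sigma z=\sigma(\tau)^4$, both fractional linear in $x$ and $z$ with coefficients $\theta=\theta_0^2$ and $\theta^2$. As in Step 2 of Proposition 3.9, I then impose $\sigma w\in K_2(z)\langle1,w,w^2,w^3\rangle$. Writing $\sigma w=\sum f_iw^i$, squaring twice and comparing coefficients of $w^2$ with the expression obtained from the descent gives a condition of the form
\[
\text{(explicit expression in }\theta,\beta,\widetilde\gamma_{l_i},\widetilde\psi)\in K_2^2(z^2).
\]
I will read off the top-degree coefficient and the coefficient of the odd-degree term $z(z^2+\widetilde\beta)$, exactly as for the $z$-coefficient in Proposition 3.8 and the top term in (10).

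The expected outcome is that these conditions force $\theta_0^{4}\in K_2^{4}\cdot(\text{a fixed element})$ together with a linear relation $\theta_0\beta^{1/2}+\theta_0^2\gamma\in K_1$. Here $\gamma$ is built from $\widetilde\gamma_0$ and $\widetilde\beta$, and the relation uses $\widetilde\beta\notin K_2^2$. Combining the two conditions should leave only $\theta_0\in\{0,\theta^\ast\}$ for one specific value $\theta^\ast$, or exclude the type-two maps entirely so that only the involution survives. The separate treatment of the $\widetilde\gamma_{l_i}$ terms and of $\widetilde\psi$ will mirror Lemma 3.6 and Proposition 3.9: when $\widetilde\psi$ has degree $\ge3$, the leading-term argument gives $\theta=0$; when $\widetilde\psi$ has low degree, an explicit check is needed.

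I will finish by showing that the involution and a nontrivial $\sigma_{\theta^\ast}$ cannot both lie in $H_2$. Their product has the form $\tau\mapsto c/\tau$ with $c\ne\beta^{1/2}$, which does not fix $\tau=\beta^{1/4}$, contradicting Lemma 3.2 of \cite{s3}. This gives $|H_2|\le 2$. The main obstacle is the coefficient computation on $C_2$: keeping track of the rational-function algebra with $\widetilde\psi^2(z^2+\widetilde\beta)$ present, and correctly using $\widetilde\psi(z)\in z\cdot K_2[z]$ with no monomials in $K_2^2[z^2]$. I would first carry out the computation with $\widetilde\psi=0$ and $m=0$ to see the constraint on $\theta_0$, and then show that the extra terms only add further constraints of the same type.
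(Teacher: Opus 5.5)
Your set-up is the paper's: descend through $x$ and $\tau$ to $C_0\cong\mathbb{P}^1_{K_0}$, use Remark 3.17 to reduce to $\iota\colon\tau\mapsto\beta^{1/2}/\tau$ and the maps $\sigma_{\theta_0}$, then push $\sigma_{\theta_0}$ back to $C_2$ and compare $w^2$-coefficients. However, the decisive step appears only as an ``expected outcome'', and the outcome you predict is not what happens. You never identify what actually forces $\theta_0=0$. You also plan work that is not needed: top-degree coefficients, the $\widetilde\gamma_{l_i}$, and a split on $\deg\widetilde\psi$.

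The missing computation is short. Put $\theta=\theta_0^2$ and use
\[
\sigma\tau=\frac{(1+\beta^{1/2}\theta)\tau+\theta_0(x+\beta^{1/2})}{1+\theta x},
\qquad
\sigma x+\beta^{1/2}=\frac{(1+\beta^{1/2}\theta)(x+\beta^{1/2})}{1+\theta x},
\]
together with $y=\tau(x+\beta^{1/2})+(\text{a function of }x)$. This gives
\[
\sigma w=\frac{(1+\beta\theta^2)\,w}{1+\theta^2 z}+g,\qquad
(\sigma w)^2=\frac{(1+\widetilde\beta\theta^4)\,w^2}{(1+\theta^2 z)^2}+g^2,
\]
where $g$ depends on $x$ only, so $g^2$ is a function of $z$. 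Now write $(\sigma w)^2=[f_0^2+f_2^2\widetilde P]+[f_1^2+f_3^2\widetilde P]\,w^2$. The odd part $z(z^2+\widetilde\beta)$ of $\widetilde P$ forces $f_3=0$. Then $1+\widetilde\beta\theta^4=\bigl(f_1(1+\theta^2 z)\bigr)^2\in K_2^2$. Since $\theta\in K_1$, we have $\theta^4\in K_1^4=K_2^2$, while $\widetilde\beta\notin K_2^2$; hence $\theta=0$.

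So every $\sigma_{\theta_0}$ with $\theta_0\neq 0$ is excluded outright, only $\iota$ can survive, and $|H_2|\le 2$. The $\widetilde\gamma_{l_i}$ and $\widetilde\psi$ terms affect only the $w^0$-part and play no role.

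Your closing argument is also wrong. In $\mathrm{PGL}(2,K_0)$,
\[
\begin{pmatrix}0&\beta^{1/2}\\ 1&0\end{pmatrix}\begin{pmatrix}1&\theta^\ast\beta^{1/2}\\ \theta^\ast&1\end{pmatrix}=\begin{pmatrix}\theta^\ast\beta^{1/2}&\beta^{1/2}\\ 1&\theta^\ast\beta^{1/2}\end{pmatrix},
\]
so $\iota\sigma_{\theta^\ast}=\sigma_{1/(\theta^\ast\beta^{1/2})}$. This is again of the second type and fixes $\tau=\beta^{1/4}$. It is not of the form $\tau\mapsto c/\tau$, and there is no contradiction with Lemma 3.2 of \cite{s3}.

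Likewise, the second-type maps are not closed under composition on their own: $\sigma_{\theta_0}\sigma_{\theta_0'}=\iota$ when $\beta^{1/2}\theta_0\theta_0'=1$. It is $\{\sigma_{\theta_0}\}\cup\{\iota\}$ that forms a group. In the paper this issue never arises, because the leading-coefficient computation above leaves no nontrivial $\sigma_{\theta_0}$ at all.
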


\begin{proof}
\noindent\textbf{Step 1: descent to $C_1$ and $C_0$.}
The local equation of $C_2$ over $K_2$ is
\[
C_2/K_2:\quad w^4 = \widetilde{P}(z) := z(z^2+\widetilde\beta) + \widetilde\gamma_0
                     + \sum_{i=1}^{m}\widetilde\gamma_{l_i}(z^2+\widetilde\beta)^{l_i}
                     + \widetilde\psi^2(z^2+\widetilde\beta).
\]
Set $\beta := \widetilde\beta^{1/2}$, $\gamma_0 := \widetilde\gamma_0^{1/2}$,
$\gamma_{l_i} := \widetilde\gamma_{l_i}^{1/2}$, $\psi(x):=\widetilde\psi(x^2)^{1/2}$,
$$
\delta:=
\begin{cases}
1, & \text{if $\widetilde\gamma_0 \in K_2 \setminus K_2^2$}\\[3pt]
0, & \text{if $\widetilde\gamma_0 \in K_2^2$}
\end{cases}
$$
and define
\[
x = \frac{w^2 - \gamma_0 - \sum_{i=1}^{m}\gamma_{l_i}(z+\beta)^{l_i}
      - \widetilde\psi(z^2+\widetilde\beta)}{z+\beta},\quad
y = w - \psi(x^2+\beta)-(1-\delta) \gamma_0^{1/2}.
\]
Then $z = x^2$, and we obtain the local equation of $C_1$ over $K_1$:
\[
C_1/K_1:\quad y^2 = x(x^2+\beta) + \sum_{i=1}^{m}\gamma_{l_i}(x^2+\beta)^{l_i}+\delta \cdot \gamma_0.
\]
Introduce $h(x) = \sum_{i=1}^{m}\gamma_{l_i}^{1/2}(x+\beta^{1/2})^{l_i}$ and
\[
\tau = \frac{y -\delta \cdot \gamma_0^{1/2} - h(x)}{x + \beta^{1/2}}.
\]
One checks that $x = \tau^2$, and
\[
C_0/K_0:\quad y = \tau(\tau^2+\beta^{1/2}) +\delta \cdot \gamma_0^{1/2} + \sum_{i=1}^{m}\gamma_{l_i}^{1/2}(\tau^2+\beta^{1/2})^{l_i}
\]
is the local equation of $C_0 \cong \mathbb{P}^1_{K_0}$.
By Remark 3.17, to prove that $|H_2|\le 2$, it suffices to consider automorphisms of the following form:
\[
\sigma \tau = \frac{\tau + \theta_0\beta^{1/2}}{\theta_0 \tau + 1},\qquad \theta_0\in K_0.
\]

\medskip
\noindent\textbf{Step 2: automorphism of $C_1$.}
By $x = \tau^2$, we get
\[
\sigma x = (\sigma \tau)^2 = \frac{x + \theta\beta}{\theta x + 1},\quad
\theta := \theta_0^2 \in K_1.
\]
A direct calculation using the relation $\tau = (y-\delta \cdot \gamma_0^{1/2}-h(x))/(x+\beta^{1/2})$
yields
\[
\begin{aligned}
\sigma y &=\sigma \tau \cdot (\sigma x +\beta^{1/2})+\delta \cdot \gamma_0^{1/2} + \sigma h(x)\\
&=\frac{\tau + \theta^{1/2}\beta^{1/2}}{\theta^{1/2} \tau + 1}  \cdot \left( \frac{x + \theta\beta}{\theta x + 1}+\beta^{1/2} \right)+\delta \cdot \gamma_0^{1/2} + \sigma h(x)\\
&=\frac{(\tau + \theta^{1/2}\beta^{1/2})(\theta^{1/2} \tau + 1)}{\theta x + 1}  \cdot \frac{x + \theta\beta+\beta^{1/2}\theta x+\beta^{1/2}}{\theta x + 1}   +\delta \cdot \gamma_0^{1/2} + \sigma h(x)\\
&=\frac{(\beta^{1/2}\theta+1)\tau+\theta^{1/2}(x+\beta^{1/2})}{\theta x + 1}  \cdot \frac{(x + \beta^{1/2})(1+\beta^{1/2}\theta)}{\theta x + 1}   +\delta \cdot \gamma_0^{1/2} + \sigma h(x)\\
&= \frac{[y - \delta \cdot \gamma_0^{1/2} - h(x)](1+\beta\theta^2)
                + (1+\beta^{1/2}\theta)\theta^{1/2}(x^2+\beta)}
                {1+\theta^2 x^2}
           + \delta \cdot \gamma_0^{1/2} + \sigma h(x).
\end{aligned}
\]

\medskip
\noindent\textbf{Step 3: return to $C_2$.}
As $z = x^2$ and $w = y + \psi(x^2+\beta)+(1-\delta) \gamma_0^{1/2}$, we have
$$
\sigma z = \frac{z + \theta^2\beta^2}{\theta^2 z + 1}
$$
and
$$
\begin{aligned}
&\,\,\,\,\,\,\,\,  \sigma w\\
&=\sigma y+\sigma \psi(x^2+\beta)+(1-\delta)\gamma_0^{1/2}\\
&=\frac{[y-\delta \cdot \gamma_0^{\frac{1}{2}}-h(x)](1+\beta \theta^2)+(1+\beta^{\frac{1}{2}} \theta)\theta^{\frac{1}{2}} (x^2+\beta)}{1+\theta^2 x^2}+\gamma_0^{\frac{1}{2}}+\sigma h(x)+\sigma \psi(x^2+\beta)\\
&=\frac{[w-\psi(z+\beta)-\gamma_0^{\frac{1}{2}}-h(x)](1+\beta \theta^2)+(1+\beta^{\frac{1}{2}} \theta)\theta^{\frac{1}{2}} (z+\beta)}{1+\theta^2 z}+\gamma_0^{\frac{1}{2}}+\sigma h(x)\\
&\,\,\,\,\,\,\,+\sigma \psi(z+\beta).
\end{aligned}
$$
Note that $[\sigma h(x)]^2=\sigma \left[\sum_{i=1}^m \gamma_{l_i}(z+\beta)^{l_i} \right]$, so
\begin{equation}
(\sigma w)^2=\frac{w^2(1+\widetilde\beta \theta^4)}{(1+\theta^2 z)^2}+f(z),
\end{equation}
where
$$
\begin{aligned}
f(z):&=\frac{(1+\beta \theta^2)\theta (z^2+\widetilde\beta)-[\psi^2(z+\beta)+\gamma_0+\sum_{i=1}^m \gamma_{l_i}(z+\beta)^{l_i}](1+\widetilde\beta \theta^4)}{(1+\theta^2 z)^2}+\gamma_0\\
&\,\,\,\,\,\,\,+\sigma \left[\sum_{i=1}^m \gamma_{l_i}(z+\beta)^{l_i} \right]+\sigma \psi^2(z+\beta).
\end{aligned}
$$

Since $\sigma$ is an automorphism of $C_2$, we can write
$\sigma w = f_0(z) + f_1(z)w + f_2(z)w^2 + f_3(z)w^3$ with $f_i(z)\in K_2(z)$.
Squaring and using $w^4 = \widetilde{P}(z)$ gives
\begin{equation}
(\sigma w)^2 = [f_0^2 + f_2^2\widetilde{P}(z)] + [f_1^2 + f_3^2\widetilde{P}(z)] w^2.
\end{equation}
From the coefficients of $w^2$ in (15) and (16), we get
\[
\frac{1 + \widetilde\beta\theta^4}{(1+\theta^2 z)^2} = f_1(z)^2 + f_3(z)^2\widetilde{P}(z).
\]
Since $\widetilde{P}(z)$ contains the term $z(z^2+\widetilde\beta)$,
we have $f_3(z)\equiv0$.
Thus
\[
1 + \widetilde\beta\theta^4 = f_1(z)^2 (1+\theta^2 z)^2 \in K_2^2(z^2),
\]
which forces $\widetilde\beta\theta^4\in K_2^2$.  Because $\widetilde\beta\notin K_2^2$
and $\theta^4\in K_2^2$, we must have $\theta=0$.
Therefore $\sigma = \mathrm{id}$. Thus, $|H_2|\leq 2$.
\end{proof}

By Lemma 3.6, Proposition 3.8, Proposition 3.9, and Theorem 3.18, we have the following result.

\begin{thm}
When $p=2$ or 3, if  $p_a(C_1)=1$, then the Sylow $p$-group of $\mathrm{Aut}_{K_2}(C_2)$ has order at most $4(p_a(C_2)-1)$.
\end{thm}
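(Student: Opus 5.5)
The plan is to assemble the theorem from the case analysis already carried out, organized by whether $Q_2$ is semi-rational.

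\textbf{Semi-rational case.} Suppose $Q_2$ is semi-rational. By Proposition 3.4, the local equation of $C_2$ has the form (4). There are two subcases.
\begin{enumerate}
\item[(a)] If $\widetilde b_{l_n}\in K_2\setminus K_2^p$ and $pl_n>5$, then Lemma 3.6 shows the Sylow $p$-subgroup is trivial.
\item[(b)] Otherwise, we reduce to one of the two residual equations listed in Remark 3.7.
\end{enumerate}
Subcase (b) needs care: we must check that the residual possibilities are exactly those listed, namely $l_n p\le 5$ or the top coefficient lies in $K_2^p$. In the second situation, the top monomial $\widetilde b_{l_n}z^{pl_n}$ lies in $K_2^p[z^p]$ and can be absorbed into $\widetilde\psi^p(z)$, after which we apply induction on $n$. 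Since $5-p+1=6-p$, the constraint $pl_i\le 5$ leaves only $l_1=1$ for $p=3$, and $l_i\in\{1,2\}$ for $p=2$.

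For these residual equations:
\begin{enumerate}
\item[$\bullet$] When $p=3$, Proposition 3.8 shows the Sylow subgroup is trivial.
\item[$\bullet$] When $p=2$, Proposition 3.9, via Lemmas 3.10 and 3.11, gives $|H_2|<4(p_a(C_2)-1)$.
\end{enumerate}
A trivial group obviously satisfies the bound provided $p_a(C_2)\ge 2$. I would verify this from the genus formula for $w^{p^2}=z^{5-p}+\cdots$: it gives $p_a(C_2)=\tfrac12(p^2-1)(4-p)$, which equals $3$ for $p=2$ and $4$ for $p=3$. In both cases $4(p_a(C_2)-1)\ge 8>1$.

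\textbf{Non-semi-rational case.} Here $p=2$ necessarily. For $p=3$, Proposition 3.3(i) forces $Q_1$ to be semi-rational, and then $Q_2$ is as well, because $x=z^{1/3}$ type relations give $z_0=x_0^3\in K_2$. I would spell out this implication: since $z=x^p$, we have $z_0=x_0^p$ with $x_0\in K_1=K_2^{1/p}$, so $z_0\in K_2$.

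For $p=2$ with $Q_2$ not semi-rational, Theorem 3.15 fixes the local equation, and Theorem 3.18 gives $|H_2|\le 2$. It remains to check that $4(p_a(C_2)-1)\ge 2$. From $w^4=z(z^2+\widetilde\beta)+\cdots$ with $t=1$, the Lemma 3.1(ii)-type genus formula $\tfrac12(p^2-1)p^st$ gives $p_a(C_2)=3$, so $4(p_a(C_2)-1)=8\ge 2$.

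The main obstacle is making sure the case split is exhaustive. Concretely, the reduction in Remark 3.7 requires showing that whenever the hypothesis of Lemma 3.6 fails, one can renormalize to the listed forms, and the genus of $C_2$ must be computed correctly in each case so that the inequality is non-vacuous. The remaining steps are direct citations.
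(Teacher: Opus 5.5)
Your proposal is correct and follows the paper's own route: the paper proves Theorem 3.19 by the same assembly. In the semi-rational case it uses Lemma 3.6 together with Propositions 3.8 and 3.9 (the latter via Lemmas 3.10--3.11), and in the non-semi-rational case, which forces $p=2$, it uses Theorems 3.15 and 3.18. Your extra checks ($p_a(C_2)=4$ resp.\ $3$, so that the bound is non-vacuous, and $z_0=x_0^p\in K_2$ showing that $Q_2$ is semi-rational when $p=3$) make explicit what the paper leaves implicit; the absorption/induction step for a top coefficient in $K_2^p$ is harmless but unnecessary, since Proposition 3.4 already normalizes $\widetilde b_{l_i}\in K_2\setminus K_2^{*p}$.
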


\begin{rmk}
By Remark 2.3, Remark 2.7, and Theorem 3.19, the Sylow $p$-subgroup $H$ in Lemma 2.2 satisfies $|H|\leq 2p(g-1)$. By Tate's genus change formula in \cite{s16}, $(p-1)| 2g$. Thus, we have $|H|\leq 2(g-1)(2g+1)$.
\end{rmk}

\section{Surfaces of general type with $c_2<0$}
Let $k$ be an algebraically closed field of characteristic $p>0$, $S$ a minimal smooth projective surface of general type over $k$. By Lemma 2.2 and Remark 3.20, we have the following results.

\subsection{The group of fibration-preserving automorphisms}
\begin{thm}
Let $f\colon S \rightarrow B$ be a fibration over an algebraically closed field k of characteristic $p > 0$, where S is a minimal smooth projective surface of general type and B is a smooth projective curve of genus $b\geq 2$. Let $C$ be the generic fiber of $f$, and $g:=p_{a}(C)$. Let $K$ be the function field of $B$, and $\overline{C}:=C \times_{K}\overline{K}$. Let $t$ be the number of singular points on $\overline{C}$, and $\pi \colon \widetilde{C} \to \overline{C}$ the normalization. We have
\\(i) if $g(\widetilde{C}) \geq 2$, then $|\mathrm{Aut}_f(S)| \leq \left(\frac{224}{3} \right)^2 g^{4}  b^{4}$.
\\(ii) if $g(\widetilde{C})=1$, then $|\mathrm{Aut}_f(S)|\leq 1792 (g-1) b^{4}$.
\\(iii) if $g(\widetilde{C})=0$ and $t \geq 2$, then $|\mathrm{Aut}_f(S)|\leq \frac{896}{3} g(g-1)(2g+1) b^4$.
\\(iv) if $g(\widetilde{C})=0$ and $t=1$, then $|\mathrm{Aut}_f(S)|\leq \frac{1792}{3}(g-1)(2g+1)^2 b^4$. 
\end{thm}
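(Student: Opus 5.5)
The plan is to use the exact sequence
\[
1 \rightarrow \mathrm{Aut}_{B}(S) \rightarrow \mathrm{Aut}_{f}(S) \rightarrow \mathrm{Aut}_{k}(B)
\]
from Section 2. It gives $|\mathrm{Aut}_f(S)|\le |\mathrm{Aut}_B(S)|\cdot|\mathrm{Aut}_k(B)|$, and Stichtenoth's bound yields $|\mathrm{Aut}_k(B)|\le \frac{224}{3}b^4$. Since $\mathrm{Aut}_B(S)\cong \mathrm{Aut}_K(C)$, each of the four assertions reduces to bounding $|\mathrm{Aut}_K(C)|$ in the corresponding case of Lemma 2.2 and then multiplying by $\frac{224}{3}b^4$.

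Cases (i)--(iii) follow directly from Lemma 2.2 (i)--(iii).
\begin{itemize}
\item[(i)] Lemma 2.2(i) gives $|\mathrm{Aut}_K(C)|\le \frac{224}{3}g^4$, so the product is $(\frac{224}{3})^2g^4b^4$.
\item[(ii)] Lemma 2.2(ii) gives $24(g-1)$, and $24\cdot\frac{224}{3}=1792$.
\item[(iii)] Lemma 2.2(iii) gives $g(g-1)(8g+4)=4g(g-1)(2g+1)$, and $4\cdot\frac{224}{3}=\frac{896}{3}$.
\end{itemize}

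Case (iv) is the substantive one. By Lemma 2.2(iv), $\mathrm{Aut}_K(C)\cong H\rtimes M$, where $M$ is cyclic of order at most $8g+4=4(2g+1)$. Hence $|\mathrm{Aut}_K(C)|=|H|\cdot|M|\le 4(2g+1)|H|$. For $|H|$ I would invoke Remark 3.20, which gives $|H|\le 2(g-1)(2g+1)$ in all characteristics. This combines three inputs:
\begin{itemize}
\item the bound $|H|\le p(2g-2)$ from Remark 2.3 when $p\ge5$;
\item Remark 2.7 when $p\in\{2,3\}$ and $p_a(C_1)\ge2$;
\item Theorem 3.19 together with the inclusion $H\subseteq \mathrm{Aut}_{K_2}(C_2)$ from Remark 2.4 when $p_a(C_1)=1$. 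This needs the small check that $4(p_a(C_2)-1)\le 2p(g-1)$, using $p_a(C_2)\le g$.
\end{itemize}
Tate's formula, $(p-1)\mid 2g$, gives $p\le 2g+1$, which converts $2p(g-1)$ into $2(g-1)(2g+1)$. Multiplying the pieces,
\[
|\mathrm{Aut}_f(S)|\le \tfrac{224}{3}b^4\cdot 4(2g+1)\cdot 2(g-1)(2g+1)=\tfrac{1792}{3}(g-1)(2g+1)^2b^4,
\]
which matches (iv).

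The main obstacle is certifying the uniform bound $|H|\le 2(g-1)(2g+1)$ in case (iv) across all characteristics. In particular, for $p\in\{2,3\}$ with $p_a(C_1)=1$, the passage from $C$ to $C_2$ must respect the Sylow subgroups and must not increase the arithmetic genus. I would check this first by tracing the inclusions in Remark 2.4 and the monotonicity of $p_a$ along the chain of normalized Frobenius base changes, which follows from Tate's genus change. Once that is settled, the rest is arithmetic.
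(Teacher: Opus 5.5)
Your proposal is correct and follows the paper's route exactly. The paper also derives Theorem 4.1 from the exact sequence $1\to\mathrm{Aut}_B(S)\to\mathrm{Aut}_f(S)\to\mathrm{Aut}_k(B)$, Stichtenoth's bound $\frac{224}{3}b^4$, and Lemma 2.2, using Remark 3.20 (with Tate's $(p-1)\mid 2g$) for $|H|\le 2(g-1)(2g+1)$ in case (iv), with the same arithmetic. Your check that $4(p_a(C_2)-1)\le 2p(g-1)$ via $p_a(C_2)\le g$ spells out a step that the paper leaves implicit in Remark 3.20.
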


Combining $K_S^2\geq 2(g-1)(b-1)(1+\frac{1}{g})(1+\frac{g-1}{15g+1})$ (see Lemma 2.7 in \cite{s21}), we have the following corollary.

\begin{cor}
Using the notation of Theorem 4.1, we obtain
\\(i) if $g(\widetilde{C}) \geq 2$, then $|\mathrm{Aut}_f(S)|\leq \left(\frac{6727}{54} \right)^2 (K_{S}^{2})^{4}<15519(K_{S}^{2})^{4}$.
\\(ii) if $g(\widetilde{C})=1$, then $|\mathrm{Aut}_f(S)|\leq 1085\cdot \left(\frac{31}{48} \right)^2 (K_{S}^{2})^{4}<453 (K_{S}^{2})^{4}$.
\\(iii) if $g(\widetilde{C})=0$, then $|\mathrm{Aut}_f(S)|< \frac{175}{3} \left(\frac{31}{12} \right)^4  (K_{S}^{2})^{4}<2598 (K_{S}^{2})^{4}$.
\\ As a conclusion, one has $|\mathrm{Aut}_f(S)|<15519 \cdot (K_{S}^{2})^{4}$.
\end{cor}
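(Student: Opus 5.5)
The plan is to substitute the lower bound
\[
K_S^2\ \ge\ 2(g-1)(b-1)\Bigl(1+\tfrac{1}{g}\Bigr)\Bigl(1+\tfrac{g-1}{15g+1}\Bigr)
\]
(Lemma 2.7 of \cite{s21}) into each of the four bounds of Theorem 4.1. The goal is to turn every product $g^a b^4$ into a constant times $(K_S^2)^4$. Write $L(g)=(1+\frac1g)(1+\frac{g-1}{15g+1})$, so that $K_S^2\ge 2(g-1)(b-1)L(g)$.

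First I control $b$. Since $b\ge 2$, we have $b\le 2(b-1)$, so $b^4\le 16(b-1)^4$. Hence
\[
b^4\le 16\Bigl(\frac{K_S^2}{2(g-1)L(g)}\Bigr)^4=\frac{(K_S^2)^4}{(g-1)^4L(g)^4}.
\]
Substituting this, each case of Theorem 4.1 reduces to maximizing a one–variable rational function $R(g)$, with $|\mathrm{Aut}_f(S)|\le R(g)\,(K_S^2)^4$. The four functions are:
\begin{itemize}
\item in case (i), $R(g)=(224/3)^2 g^4/((g-1)^4L^4)$;
\item in case (ii), $R(g)=1792/((g-1)^3L^4)$;
\item in cases (iii) and (iv), $R(g)$ is $\frac{896}{3}g(2g+1)/((g-1)^3L^4)$ and $\frac{1792}{3}(2g+1)^2/((g-1)^3L^4)$ respectively.
\end{itemize}

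Next I determine the admissible range of $g$ in each case. In case (i), the normalization has genus at least $2$, so $g\ge 2$. Then $g/((g-1)L(g))$ is decreasing, and it is maximal at $g=2$, where $L(2)=\frac32\cdot\frac{32}{31}=\frac{48}{31}$. In case (ii), $g\ge 2$ as well, and the value at $g=2$ gives $1792\,(31/48)^4$, which is consistent with the stated constant shape $1085\cdot(31/48)^2$ after factoring. In cases (iii) and (iv), the decreasing factor $(2g+1)^2/(g-1)^3$ is worst at the smallest admissible genus. The constant $\frac{175}{3}(31/12)^4$ suggests evaluating at a specific small $g$ (for instance $g=3$, or $g=2$ with $L$ bounded below by $12/31$-type values), and I would determine which $g$ gives the stated expression. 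It may also be necessary to use Tate's $(p-1)\mid 2g$ from Remark 3.20. I would also check that $L(g)$ is increasing in $g$, so that using $L$ at the minimum $g$ only helps.

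The main obstacle is bookkeeping. I need to verify the monotonicity of each $R(g)$ on its range, identify the correct minimal genus in each case, and confirm that the numerical constants are below $15519$, $453$ and $2598$. Monotonicity follows by writing each $R$ as a product of positive decreasing factors, since $g/(g-1)$ and $(2g+1)/(g-1)$ decrease and $1/L$ decreases. Finally I take the maximum of the three constants, which is $15519$, for the overall bound.
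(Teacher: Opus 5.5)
Your route is the same as the paper's. There the corollary is obtained simply by combining Theorem~4.1 with $K_S^2\ge 2(g-1)(b-1)L(g)$ and $b\le 2(b-1)$, then evaluating at $g=2$; for instance $(\tfrac{224}{3})^2\cdot 16\cdot(\tfrac{31}{48})^4=(\tfrac{6727}{54})^2$.

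\textbf{The monotonicity step is false as stated.} Since $1+\frac{g-1}{15g+1}=\frac{16g}{15g+1}$, you have $L(g)=\frac{16(g+1)}{15g+1}$. This \emph{decreases}, from $L(2)=\frac{48}{31}$ down to $\frac{16}{15}$. So $1/L$ increases, your $R(g)$ are not products of decreasing factors, and ``using $L$ at the minimum $g$ only helps'' is wrong: for $g\ge 3$ you may not replace $L(g)$ by $\frac{48}{31}$.

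The conclusion $R(g)\le R(2)$ is still true, but it needs a real check. Write $\frac{1}{(g-1)L(g)}=\frac{15g+1}{16(g^2-1)}$.
\begin{itemize}
\item In case (i), $R=(\frac{224}{3})^2\bigl(\frac{15g^2+g}{16(g^2-1)}\bigr)^4$, and $\frac{d}{dg}\frac{15g^2+g}{g^2-1}=-\frac{g^2+30g+1}{(g^2-1)^2}<0$.
\item In the other cases,
\[
R(g)=c\,\frac{h(g)\,(15g+1)^4}{16^4(g-1)^3(g+1)^4},\qquad h\in\{1,\ g(2g+1),\ (2g+1)^2\}.
\]
Here $\frac{h'}{h}\le\frac2g$ and $\frac{60}{15g+1}<\frac4g$. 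Hence the logarithmic derivative of $R$ is less than $\frac6g-\frac{7g-1}{g^2-1}=-\frac{g^2-g+6}{g(g^2-1)}<0$.
\end{itemize}

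\textbf{Pin down the constants rather than guessing.}
\begin{itemize}
\item In (ii) you get $1792(\frac{31}{48})^4=\frac{6727}{9}(\frac{31}{48})^2\approx 312$. This is not $1085(\frac{31}{48})^2$ ``after factoring''; it is smaller, so (ii) holds a fortiori.
\item In (iii), neither $g=3$ nor Tate's condition is involved. The constant comes from Theorem~4.1(iv) at $g=2$: $\frac{1792}{3}\cdot 25\cdot(\frac{31}{48})^4=\frac{175}{3}(\frac{31}{12})^4\approx 2597.997$. Theorem~4.1(iii) only gives $\frac{35}{3}(\frac{31}{12})^4$, and $R_{(iv)}/R_{(iii)}=\frac{2(2g+1)}{g}>1$ for all $g$.
\item Your computation gives only $\le$ in (iii). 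For the strict inequality claimed there, note that Stichtenoth's bound is attained only when $b=3$, where $b<2(b-1)$. Hence $|\mathrm{Aut}_k(B)|<\frac{224}{3}\cdot 16(b-1)^4$ for every $b\ge 2$.
\end{itemize}
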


\subsection{Automorphisms of surfaces of general type with negative $c_2$}
It is known that there exists a minimal smooth projective surface $S$ of general type with $c_2(S)<0$ in positive characteristic (see \cite{s11}). In \cite{s12}, Shepherd-Barron proved that the Albanese morphism of $S$ induces a surjective morphism $f \colon S \to B$, satisfying the following conditions:
\begin{itemize}
\item $B$ is a smooth projective curve of genus $b \geq 2$, and $f_* \mathcal{O}_S=\mathcal{O}_B$ (hence $f$ is a fibration);
\item The general fiber of $f$ is a rational curve with at least a singular point.
\end{itemize}
By the universal property of the Albanese morphism, for any $\sigma \in \mathrm{Aut}_k(S)$, there exists a morphism $\varphi \colon B \to B$ such that the following diagram commutes
\begin{displaymath}
\xymatrix{
S\ar[r]^{\sigma} \ar[d]_{f} & S \ar[d]^{f}\\
B\ar[r]^{\varphi} & B
}
\end{displaymath}
Since $\sigma$ is an automorphism of $S$ and $\varphi$ is finite, we obtain that $\varphi$ is an automorphism of $B$. Consequently, $|\mathrm{Aut}_k(S)|=|\mathrm{Aut}_f(S)|$. 

With the above preparations, we now present an application of Corollary 4.2.

\begin{thm}
Let $k$ be an algebraically closed field of positive characteristic, and let $S$ be a minimal smooth projective surface of general type over $k$ with $c_2(S)<0$. Then the following assertions hold.

(i) $|\mathrm{Aut}_k(S)|<2598 (K_S^2)^4$.

(ii) If $G$ is an abelian group of $\mathrm{Aut}_k(S)$, then $|G|<81 (K_S^2)^3$.
\end{thm}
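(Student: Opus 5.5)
For (i), I will reduce to the fibered situation already treated. By Shepherd-Barron's result recalled above, the Albanese map gives a fibration $f\colon S\to B$ with $b\ge 2$ whose general fiber is a singular rational curve. Moreover every automorphism of $S$ preserves $f$, so $|\mathrm{Aut}_k(S)|=|\mathrm{Aut}_f(S)|$. It remains to see which case of Theorem 4.1 and Corollary 4.2 applies. The geometric generic fiber $\overline{C}$ is rational, since the general fiber is rational, so $g(\widetilde{C})=0$. Whether $t=1$ or $t\ge 2$, we land in case (iii) of Corollary 4.2, which gives $|\mathrm{Aut}_f(S)|<2598(K_S^2)^4$. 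The only point needing care is the identification $g(\widetilde C)=0$ from rationality of the general closed fiber. I will justify it via the Frobenius tower of Remark 2.4: the normalization of the generic fiber becomes $\mathbb{P}^1$ after a purely inseparable base change. In the $t=1$ case, Lemma 2.2(iv) together with Remark 3.20 gives the bound $|H|\le 2(g-1)(2g+1)$ on the Sylow $p$-part.

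For (ii), let $G\subset \mathrm{Aut}_f(S)$ be abelian. I will use the exact sequence
\[
1\to G\cap \mathrm{Aut}_B(S)\to G\to \mathrm{Aut}_k(B).
\]
The image of $G$ in $\mathrm{Aut}_k(B)$ is an abelian group of automorphisms of a curve of genus $b\ge 2$. For this image I would use the known bound for abelian automorphism groups of curves in positive characteristic, which is of order $4b+4$; this should be quadratic at worst, in place of Stichtenoth's $b^4$. The kernel embeds into $\mathrm{Aut}_K(C)$, and by Lemma 2.2(iii)/(iv) it has order at most $\max\{g(g-1)(8g+4),\,(8g+4)\cdot 2(g-1)(2g+1)\}$, roughly cubic in $g$. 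For an abelian group in case (iv), I would try to sharpen this using the structure $H\rtimes M$ with $H$ the Sylow $p$-part.

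Combining the two factors gives a bound of the form $c\,g^3 b$ or $c\,g^2(g-1)b$. I then convert it using $K_S^2\ge 2(g-1)(b-1)(1+\frac1g)(1+\frac{g-1}{15g+1})$ together with $b-1\ge 1$ and $b\le 2(b-1)$. The constant must come out below $81$. Here I will exploit that $(p-1)\mid 2g$ and that $g\ge 2$, so that $g$ is controlled by $g-1$ with a factor at most $2$.

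The main obstacle is the numerical bookkeeping in (ii). It requires choosing the right abelian-group bound for $B$, and a linear bound in $b$ is needed to obtain an exponent of $3$. It also requires estimating $g^3/((g-1)^3)$ and the correction factors so that the final constant is at most $81$. If the linear abelian bound for curves is unavailable in characteristic $p$, I would instead bound the image via its Sylow $p$-part and its prime-to-$p$ part separately, using the Hurwitz formula for the quotient $B\to B/G$.
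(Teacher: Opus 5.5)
Your proposal matches the paper's proof. Part (i) is Corollary 4.2(iii) once $\mathrm{Aut}_k(S)=\mathrm{Aut}_f(S)$ for the Albanese fibration. Part (ii) multiplies Nakajima's bound $4b+4$ on the image in $\mathrm{Aut}_k(B)$ by the kernel bound $(8g+4)\cdot 2(g-1)(2g+1)=8(g-1)(2g+1)^2$ from Lemma 2.2 and Remark 3.20, then converts using the $K_S^2$ inequality.

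Be aware that the numerics are tight: the worst case $g=b=2$ gives exactly $31(155/96)^2\approx 80.8$. So you must keep the factor $(1+\frac1g)(1+\frac{g-1}{15g+1})=\frac{16(g+1)}{15g+1}$ coupled with $(2g+1)^2/(g-1)^2$ rather than bounding each uniformly. Using $2g+1\le 5(g-1)$ together with the limiting value $16/15$ would give only about $247$.
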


\begin{proof}
(i) is immediate from Corollary 4.2 (iii).

(ii) Let $f\colon S\to B$ be the Albanese fibration and $b:=g(B)\geq 2$. It is known that the abelian automorhisms group of $B$ has order at most $4b+4$ (see \cite{s10}). By Lemma 2.2 and Remark 3.20, we have 
$|\mathrm{Aut}_B(S)|\leq 8(g-1)(2g+1)^2$. Thus,

$$
|G|\leq 8(g-1)(2g+1)^2 (4b+4)\leq 31\cdot \left(\frac{155}{96} \right)^2 (K_S^2)^3 <81(K_S^2)^3.
$$
\end{proof}

\begin{example}
Let $k$ be an algebraically closed field of characteristic 2, and let the curve $B$ be defined by
\[
Y^4 Z + YZ^4 = X^5.
\]
The surface $S$, obtained from $B\times \mathbb{P}^1$ by taking the quotient relative to the foliation
\[
D = s^6 \frac{\partial}{\partial s} + \frac{\partial}{\partial x},
\]
is a minimal smooth projective surface of general type with $c_2(S)=-20$ and $K_S^2=32$ (see \cite{s4}, 4.3.3), where $s$ and $x:=X/Z$ are parameters on $\mathbb{P}^1$ and $B$, respectively.

By construction, the Albanese morphism of $S$ induces a fibration $f\colon S\to B$. Over the open subset $B_0:=B\cap\{Z\neq 0\}$ of $B$, $S$ is defined as
\[
Y_0^2 = S_0 T_0^5 + x S_0^6
\]
in the weighted projective space $\operatorname{Proj}(\mathcal{O}_{B_0}[S_0^1,T_0^1,Y_0^3])$, where 
the superscript on each element is its homogeneous degree. Thus, the generic fiber $C$ of $f$ is a rational curve with a unique non-smooth point, whose local equation is
\begin{equation}
Y_0^2 = T_0^5 + x.
\end{equation}

Recall that $\mathrm{Aut}_B(S) \cong \mathrm{Aut}_K(C) \cong H \rtimes M$, where $K$ is the function field of $B$, $H$ is a Sylow $p$-subgroup of $\mathrm{Aut}_K(C)$, and $M$ is a cyclic subgroup of $\mathrm{Aut}_K(C)$. It follows that $\mathrm{Aut}_B(S) \cong \mathbb{Z}_5$:
\begin{itemize}
\item Every element $\sigma \in H$ can be written as
$$
\sigma T_0=T_0/(1+\theta T_0), \quad     \sigma Y_0=f_1(T_0)+f_2(T_0) Y_0,
$$
where $f_1(T_0), f_2(T_0)\in K[T_0]$ and $\theta \in K$. Substituting this into Equation (17) and noting that $x \in K\setminus K^2$, we obtain $H=\{\mathrm{id}\}$. 
\item Every element $\sigma \in M$ can be written as
$$
\sigma T_0=aT_0/(1+b T_0), \quad     \sigma Y_0=h_1(T_0)+h_2(T_0) Y_0,
$$
where $h_1(T_0), h_2(T_0)\in K[T_0]$, $a\in K$ and $b \in K$. Substituting this into Equation (17) yields $M=\{\sigma T_0=\xi T_0 \,\,\text{and}\,\, \sigma Y_0=Y_0 \,|\, \xi^5=1 \} \cong \Bbb{Z}_5$.
\end{itemize}

Therefore,
$$
|\mathrm{Aut}_k(S)|=|\mathrm{Aut}_f(S)|\leq 5\cdot|\mathrm{Aut}_k(B)|=5\times 62400=312000\leq \frac{4875}{16384} (K_S^2)^4.
$$
Morever, let $G$ be an abelian subgroup of $\mathrm{Aut}_k(S)$. Since every abelian subgroup of $\mathrm{Aut}_k(B)$ has order at most $4g(B)+2=26$ (see \cite{s10}), we have
\[
|G|\le 5\cdot (4g(B)+2)=130= \frac{65}{16384} (K_S^2)^3.
\]
\end{example}


\section*{Acknowledgements}
I would like to thank Prof. Wenfei Liu for suggesting this problem.



\begin{thebibliography}{99}

\bibitem{s1}  E. Ballico, \textit{On the automorphisms of surfaces of general type in positive characteristic}, Rend. Mat. Acc. Lincei, \textbf {9(4)} (1993), 121--129.

\bibitem{s2}  J. Cai, \textit{Bounds of automorphisms of surfaces of general type in positive characteristic}, J. Pure Appl. Algebra, \textbf {149} (2000), 241--250.

\bibitem{s3}  X. Faber, \textit{Finite $p$-irregular subgroups of PGL$_2(k)$}, La Matematica, \textbf {2} (2023), 479--522.

\bibitem{s4} Y. Gu, X. Sun, M. Zhou, \textit{Slope inequalities and a Miyaoka-Yau type inequality}, J. Eur. Math. Soc., \textbf {25(2)} (2023), 611--632.


\bibitem{s5} C. D. Hacon, J. M$^C$kernan , C. Xu, \textit{On the birational automorphisms of varieties of general type}, Annals of mathematics,  \textbf {177}, (2013), 1077--1111.

\bibitem{s6} A. T. Huckleberry, M. Sauer, \textit{On the order of automorphism group of a surface of general type}, Math. Z., \textbf {205} (1990), 321--329.

\bibitem{s7} A. Hurwitz, $\ddot{U}$\textit{ber algebraische Gebilde mit eindeutigen Transformationen in sich}, Mathematische Annalen, \textbf {41(3)} (1893), 403--442.

\bibitem{s8} Q. Liu, \textit{Algebraic Geometry and Arithmetic Curves}, Oxford University Press. Vol. \textbf {6} (2002).

\bibitem{s9} X. Lv, S. Tan, \textit{Sharp bound on abelian automorphism groups of surfaces of general type}, Memoirs of the American Mathematical Society, \textbf{317(1532)} (2025).


\bibitem{s10} S. Nakajima, \textit{On abelian automorphism groups of algebraic curves}, J. London Math. Soc., \textbf {s2-36(1)} (1987), 23--32.

\bibitem{s11} M. Raynaud, \textit{Contre-example au ``vanishing theorem" en caract$\mathrm{\acute{e}}$ristique $p>0$. In : C. P. Ramanujam---a Tribute}, Tata Inst. Fund. Res. Stud. Math., \textbf{8}, Springer, Berlin, (1978), 273--278.

\bibitem{s12} N. I. Shepherd-Barron. \textit{Geography for surfaces of general type in positive characteristic}, Invent. Math., \textbf {106} (1991), 263--274.  

\bibitem{s13} I. Shimada, \textit{On supercuspidal families of curves on a surface in positive characteristic}, Math. Ann., \textbf {292} (1992), 645--669.

\bibitem{s14} H. Stichtenoth, $\ddot{U}$\textit{ber die Automorphismengruppe eines algebraischen Funktionenk}$\ddot{o}$\textit{rpers von Primzahlcharakteristik. Teil \uppercase \expandafter{\romannumeral 1}: Eine Absch}$\ddot{a}$\textit{tzung der Ordnung der Automorphismengruppe}, Arch. Math., (Basel), \textbf {24} (1973), 527--544. 

\bibitem{s15} E. Szab\'{o}, \textit{Bounding automorphism groups}, Math. Ann., \textbf{304} (1996), 801--811.

\bibitem{s16} J. Tate, \textit{Genus change in inseparable extensions of function fields}, Proc. Amer. Math. Soc., \textbf {3} (1952), 400--406.

\bibitem{s17} G. Xiao, \textit{Bound of automorphisms of surfaces of general type, \uppercase \expandafter{\romannumeral 1}}, Annals of Mathematics. \textbf {139} (1994), 51--77.

\bibitem{s18} G. Xiao, \textit{Bound of automorphisms of surfaces of general type, \uppercase \expandafter{\romannumeral 2}}. J. Algebraic Geom., \textbf {4} (1995), 701--793.

\bibitem{s19} G. Xiao, \textit{On abelian automorphism groups of surfaces of general type}, Invent. Math., \textbf {102} (1990), 619--631.

\bibitem{s20} S. Yang, X. Yu, Z. Zhu, \textit{On automorphism groups of smooth hypersurfaces}, Journal of Algebraic Geometry, \textbf {34} (2025), 579--611.

\bibitem{s21} X. Zhong, \textit{Bound of automorphisms of fibred surfaces in positive characteristic}, J. Algebra., \textbf {667} (2025),725--745.



\end{thebibliography}
\end{document}